\documentclass[]{amsart}

\captionindent=0pt

\makeatletter
\@addtoreset{equation}{section}

\makeatother

\newtheorem{theorem}{Theorem}[section]
\newtheorem{proposition}[theorem]{Proposition}
\newtheorem{corollary}[theorem]{Corollary}
\newtheorem{lemma}[theorem]{Lemma}
\theoremstyle{definition}
\newtheorem{definition}[theorem]{Definition}

\theoremstyle{remark}
\newtheorem{remark}[theorem]{Remark}

\DeclareMathOperator{\sech}{sech}
\DeclareMathOperator{\cn}{cn}
\DeclareMathOperator{\dn}{dn}
\DeclareMathOperator{\sn}{sn}
\DeclareMathOperator{\am}{am}
\DeclareMathOperator{\sgn}{sign}


\usepackage{braket} 
\usepackage{amssymb} 
\usepackage{color}
\usepackage[pdftex]{graphicx}
\usepackage{mathrsfs} 
\usepackage{url}

\newcommand{\lm}{\lambda}
\newcommand{\R}{\mathbf{R}}
\newcommand{\Z}{\mathbf{Z}}

\newcommand{\N}{\mathbf{N}}
\newcommand{\vk}{\kappa}
\newcommand{\vp}{\varphi}

\newcommand{\va}{\alpha}
\newcommand{\vb}{\beta}
\newcommand{\vc}{\gamma}

\newcommand{\ve}{\varepsilon}

\newcommand{\B}{\mathcal{B}}
\newcommand{\K}{\mathrm{K}}
\newcommand{\E}{\mathrm{E}}


\newcommand{\amcn}{\am_{1,p}}
\newcommand{\amdn}{\am_{2,p}}
\newcommand{\Ecn}{\mathrm{E}_{1,p}}
\newcommand{\Fcn}{\mathrm{F}_{1,p}}
\newcommand{\Kcn}{\mathrm{K}_{1,p}}
\newcommand{\Edn}{\mathrm{E}_{2,p}}
\newcommand{\Fdn}{\mathrm{F}_{2,p}}
\newcommand{\Kdn}{\mathrm{K}_{2,p}}


\begin{document}

\title{Complete classification of planar $p$-elasticae}
\author{Tatsuya Miura}
\address[T.~Miura]{Department of Mathematics, Tokyo Institute of Technology, 2-12-1 Ookayama, Meguro-ku, Tokyo 152-8551, Japan}
\email{miura@math.titech.ac.jp}

\author{Kensuke Yoshizawa}
\address[K.~Yoshizawa]{Institute of Mathematics for Industry, Kyushu University, 744 Motooka, Nishi-ku, Fukuoka 819-0395, Japan; 
present address: Faculty of Education, Nagasaki University, 1-14 Bunkyo-machi, Nagasaki, 852-8521, Japan}
\email{k-yoshizaw@nagasaki-u.ac.jp
}

\keywords{$p$-elastica, $p$-elliptic function, classification, regularity, degeneracy, singularity.}
\subjclass[2020]{49Q10, 53A04, and 33E05}

\date{\today}

\begin{abstract}
 Euler's elastica is defined by a critical point of the total squared curvature under the fixed length constraint, and its $L^p$-counterpart is called $p$-elastica.
 In this paper we completely classify all $p$-elasticae in the plane and obtain their explicit formulae as well as optimal regularity.
 To this end we introduce new types of $p$-elliptic functions which streamline the whole argument and result.
 As an application we also classify all closed planar $p$-elasticae.
\end{abstract}

\maketitle

\setcounter{tocdepth}{1}
\tableofcontents

\section{Introduction}

A central topic in geometric analysis is concerned with variational problems involving curvature energies.
In particular the bending energy for curves, defined by the $L^2$-norm of the curvature, is first investigated by Euler in the 18th century and by now a standard mathematical model of thin elastic objects; see e.g.\ \cite{Tru83,Lev} for the history, \cite{Love,LLbook,APbook} for theoretical and physical contexts, and \cite{Sac08_2,Sin,Miura20} for relatively recent mathematical treatises.

During the last decade or two, there has been a burst of attention to the $L^p$-counterpart of the bending energy, so-called \emph{$p$-bending energy} (also known as $p$-elastic energy).
For $p\in(1,\infty)$ and for immersed curves $\gamma$ in $\R^2$, the $p$-bending energy is defined by
\[
\mathcal{B}_p[\gamma] := \int_{\gamma}|k|^p\, ds,\]
where $s$ denotes the arclength parameter, and $k$ denotes the signed curvature.
The study of the $p$-bending energy branches into several directions; the study of critical points \cite{AGM,nabe14,LP20,SW20,LP22}, ambient approaches \cite{BDP93, BM04,BM07, MN13, AM17,Poz20}, gradient flows \cite{NP20, OPW20, BHV, BVH, Poz22, OW23} and others \cite{FKN18,DMOY}.
This energy also emerges in several contexts; for example, applications to image processing \cite{MM98, AM03, DFLM}, the study of linear Weingarten surfaces of revolution \cite{LP20}, mathematical models of soap films with vertical potentials or dynamics of plasma particles \cite{LP22}, and characterizations of translating flows by powers of curvature \cite{Pam24}.
See also relevant studies for the case $p\in(0,1)$ \cite{GPT23, MP23_JNS, MP23_JMAA}, and for the case $p=\infty$ \cite{Mos22, GM23}; the total absolute curvature $p=1$ also requires variationally special treatment.

The most fundamental problem of the $p$-bending energy is an extension of Euler's elastica problem, namely to understand the critical points of the energy
\[
\mathcal{B}_p[\gamma] + \lambda \mathcal{L}[\gamma],
\]
where $\mathcal{L}$ denotes the length and $\lambda\in\R$ is a given constant, often playing the role of a multiplier.
We call such a critical point \emph{$p$-elastica}, since the classical case $p=2$ corresponds to Euler's \emph{elastica}.

\begin{definition}[$p$-Elastica]\label{def:p-elastica}
Let $p\in(1,\infty)$ and $\gamma\in W^{2,p}(0,1;\R^2)$ be an immersed curve.
Then $\gamma$ is called \emph{$p$-elastica} if there is $\lambda\in\R$ such that, for any $\eta\in C^\infty_{\rm c}(0,1;\R^2)$,
\begin{align*}
\frac{d}{d\varepsilon}\Big( \mathcal{B}_p[\gamma+\varepsilon\eta] + \lambda\mathcal{L}[\gamma+\varepsilon\eta] \Big)\Big|_{\varepsilon=0}=0.
\end{align*}
\end{definition}

It is well known that for $p=2$ all planar elasticae are smooth (by bootstrap) and classified into some classes (essentially due to Euler), and have explicit parametrizations in terms of Jacobian elliptic integrals and functions (at least since Saalsch\"utz's study \cite{Saa1880}, see also \cite{Love,DHMV,MR23}).

In contrast, for $p\neq2$, not only the full classification but also the optimal regularity of planar $p$-elasticae has been left open up to now.
One difficulty is that the leading term in the Euler--Lagrange equation for curvature is singular ($p<2$) or degenerate ($p>2$) like the $p$-Laplacian (cf.\ \cite{ATU_17, LL_17, ATU_18, DZZ_20}); in fact, the curvature distributionally satisfies the equation
\begin{align}\label{Teq:sw20-1.2}
p(|k|^{p-2}k)_{ss}+(p-1)|k|^pk - \lambda k =0.
\end{align}
Thus $k$ may not be of class $C^2$ and equation \eqref{Teq:sw20-1.2} may not be understood in the classical sense.
Another point is that, although several kinds of ``$p$-elliptic functions'' have already been introduced (e.g.\ in \cite{Takeuchi12,Takeuchi_RIMS}), those functions would not be directly applicable to our problem since they would not solve the Euler--Lagrange equations for $p$-elasticae.

Among some known results on planar $p$-elasticae, we mention a remarkable result of Watanabe \cite{nabe14} that gives a family of examples of explicit solutions.
In particular, in the degenerate case $p>2$, the family includes a new class of solutions whose curvatures have non-discrete zero sets, called \emph{flat-core} solutions; more precisely, such solutions are obtained by concatenating certain loops and segments with some arbitrariness due to non-uniqueness induced by degeneracy, cf.\ Figure \ref{fig:classifyflatcore} below.
However, the solutions obtained in \cite{nabe14} are not exhaustive, and also their representations still involve rather complicated integrals.
Some (partial) classification results for related problems have also been obtained in \cite{LP20,SW20,LP22}.

The purpose of this paper is to completely solve the $p$-elastica problem in the plane.
More precisely, (i) we classify all planar $p$-elasticae, and also (ii) obtain their optimal regularity, (iii) starting from the natural energy space of Sobolev class $W^{2,p}$.
In addition, (iv) we introduce new $p$-elliptic functions and apply them to give concise explicit formulae.
All the points (i)--(iv) are new to the authors' knowledge.
Our $p$-elliptic functions not only streamline the whole proof but also rationalize how our results extend the classical ones, in particular giving the first complete extension of the Saalsch\"utz-type formulae \cite{Saa1880} from $p=2$ to $p\in(1,\infty)$.

We first state our main classification theorems in terms of $p$-elliptic integrals and functions, although all the precise definitions are postponed to Definitions \ref{def:K_p}, \ref{def:E_p}, \ref{def:cndn}, \ref{def:sech}, and \ref{def:tan_p}.
For clarity we distinguish the cases $p\leq2$ and $p>2$.
In the case of $p\leq 2$, our classification is fairly parallel to the classical one, cf.\ Figures \ref{fig:classifywave}, \ref{fig:classifyborder}, \ref{fig:classifyorbitlike}.
In particular there are no flat-core solutions.

\begin{theorem}[Explicit formulae for planar $p$-elasticae: $p\leq2$]\label{thm:Formulae_1}
  Let $p\in(1,2]$ and $\gamma$ be a $p$-elastica in $\R^2$.
  Then up to similarity (i.e., translation, rotation, reflection, and dilation) and reparametrization, the curve $\gamma$ is represented by $\gamma(s)=\gamma_*(s+s_0)$ with some $s_0\in\R$, where $\gamma_*:\R\to\R^2$ is one of the following five arclength parametrizations:
   \begin{itemize}
     \item \textup{(Case I --- Linear $p$-elastica)}
     $\gamma_\ell(s)=(s,0)$, where $k_\ell\equiv0$.
     \item \textup{(Case II --- Wavelike $p$-elastica)}
     For some $q \in (0,1)$,
     \begin{equation}\label{eq:EP2}
       \gamma_w(s,q) =
       \begin{pmatrix}
       2 \E_{1,p}(\am_{1,p}(s,q),q )-  s  \\
       -q\frac{p}{p-1}|\cn_p(s,q)|^{p-2}\cn_p(s,q)
       \end{pmatrix}.
     \end{equation}
     In this case, $\theta_w(s)=2\arcsin(q\sn_p(s,q))$ and $k_w(s) = 2q\cn_p(s,q).$
     \item \textup{(Case III --- Borderline $p$-elastica)}
     \begin{equation}\label{eq:EP3-1}
       \gamma_b(s) =
       \begin{pmatrix}
       2 \tanh_p{s} - s  \\
       - \frac{p}{p-1}(\sech_p{s})^{p-1}
       \end{pmatrix}.
     \end{equation}
     In this case, $\theta_b(s)=2\am_{1,p}(s,1)=2\am_{2,p}(s,1)$ and $k_b(s) = 2\sech_p{s}.$
     \item \textup{(Case IV --- Orbitlike $p$-elastica)}
     For some $q \in (0,1)$,
         \begin{equation}\label{eq:EP4}
       \gamma_o(s,q) = \frac{1}{q^2}
       \begin{pmatrix}
       2 \E_{2,\frac{p}{p-1}}(\am_{2,p}(s,q),q)  + (q^2-2)s \\
       - \frac{p}{p-1}\dn_p(s,q)^{p-1}
       \end{pmatrix}.
     \end{equation}
     In this case, $\theta_o(s)=2\am_{2,p}(s,q)$ and $k_o(s) = 2 \dn_p(s,q).$
       \item \textup{(Case V --- Circular $p$-elastica)}
     $\gamma_c(s)=(\cos{s},\sin{s})$, where $k_c\equiv1$.
   \end{itemize}
   Here $\theta_*$ denotes the tangential angle $\partial_s\gamma_*=(\cos\theta_*,\sin\theta_*)$, and $k_*$ the (counterclockwise) signed curvature $k_*=\partial_s\theta_*$.
\end{theorem}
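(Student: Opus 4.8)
The plan is to reduce the problem to a scalar ODE for the signed curvature, solve that ODE explicitly, and then reconstruct the curve by integrating the tangential angle. Since $\mathcal{B}_p$ and $\mathcal{L}$ are invariant under reparameterization and under the similarity group, I first normalize to the constant-speed arclength parameterization and pass to the tangential angle $\theta$ and curvature $k=\partial_s\theta$. Testing Definition~\ref{def:p-elastica} against normal (and tangential) variations and integrating by parts yields, in conservation form, the rigorous counterpart of the formal equation \eqref{Teq:sw20-1.2},
\begin{equation*}
p(p-1)\,\partial_s\!\big(|k|^{p-2}\partial_s k\big) + (p-1)|k|^p k - \lambda k = 0 .
\end{equation*}
At this stage the natural unknown is not $k$ but the quantity $v:=|k|^{p-2}k=|k|^{p-1}\sgn k$, which is continuous across the zeros of $k$; so the preliminary task is to record precisely how much regularity of $k$ and $v$ is forced by the energy space $W^{2,p}$ together with the equation, in particular that $k$ is smooth on $\{k\neq0\}$.

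The crucial step is to multiply the equation by the integrating factor $|k|^{p-2}\partial_s k=\tfrac{1}{p-1}\partial_s v$; every term then becomes an exact $s$-derivative, and one integration gives the conserved ``energy''
\begin{equation*}
\frac{p(p-1)}{2}\,|k|^{2p-4}(\partial_s k)^2 + \frac{p-1}{2p}\,|k|^{2p} - \frac{\lambda}{p}\,|k|^{p} = E
\end{equation*}
for some $E\in\R$. Rewriting this as $(\partial_s k)^2 = |k|^{4-2p}\,\Phi(|k|^p)$ with $\Phi$ a quadratic, I read off the phase portrait in the $(k,\partial_s k)$-plane: the two equilibria produce the straight line (Case~I, $k\equiv0$) and the circle (Case~V, $k\equiv\mathrm{const}$), while the non-constant orbits split, according to the sign of $k$ and the roots of $\Phi$, into oscillations crossing zero (wavelike, Case~II), the homoclinic separatrix (borderline, Case~III), and orbits with $k$ of one sign bounded away from $0$ (orbitlike, Case~IV). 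Here the hypothesis $p\le2$ is essential: since then $|k|^{4-2p}$ is bounded near $k=0$, the variable $v$ crosses each zero of $k$ transversally, so the zero set of a non-constant $k$ is discrete and \emph{no flat-core solutions occur}, making the picture exactly parallel to the classical case $p=2$.

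On each orbit the relation $(\partial_s k)^2=|k|^{4-2p}\Phi(|k|^p)$ is separable, and integrating $dk/\sqrt{|k|^{4-2p}\Phi(|k|^p)}=\pm\,ds$ expresses $s$ as an incomplete $p$-elliptic integral of $k$; inverting yields $k$ in terms of $\cn_p$, $\sech_p$, $\dn_p$ from Definitions~\ref{def:cndn} and \ref{def:sech}, which are defined so as to solve precisely these ODEs, giving $k_w=2q\cn_p$, $k_b=2\sech_p$, $k_o=2\dn_p$ after fixing the similarity parameters. To recover $\gamma$ I integrate $\theta=\int k$ and use the half-angle identities $\cos\theta=1-2\sin^2(\theta/2)$ and $\sin\theta=2\sin(\theta/2)\cos(\theta/2)$ together with $\sin(\theta_w/2)=q\sn_p$ and $\theta_o/2=\am_{2,p}$; then $\int\sin\theta\,ds$ telescopes to a single power of $\cn_p$, $\sech_p$, or $\dn_p$ (the clean second entries of \eqref{eq:EP2}--\eqref{eq:EP4}), while $\int\cos\theta\,ds$ reduces to $\E_{1,p}\circ\am_{1,p}$ (respectively $\E_{2,\frac{p}{p-1}}\circ\am_{2,p}$, where the conjugate exponent reflects the $v=|k|^{p-2}k$ duality) through the defining formula of the incomplete $p$-elliptic integral of the second kind. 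Matching the constants of integration against the freedom in $s_0$ and the similarity group produces the stated normal forms.

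I expect the main obstacle to be the low-regularity analysis rather than the explicit integration. Because only $\gamma\in W^{2,p}$ is assumed and the leading operator degenerates or becomes singular at $k=0$, one must justify the conservation-form equation and the first integral in a weak sense, show that $E$ is genuinely constant along the whole curve (not merely locally on each component of $\{k\neq0\}$), and control the behaviour of $k$ at its zeros. This is exactly where the variable $v=|k|^{p-2}k$ and the restriction $p\le2$ do the work: transversal vanishing of $v$ forces $k$ to vanish to order $1/(p-1)$ at isolated inflection points, which simultaneously rules out flat cores and pins down the optimal regularity of $\gamma$ claimed in the paper's goals.
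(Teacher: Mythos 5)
Your outline is correct and rests on the same two pillars as the paper's proof: the substitution $w=|k|^{p-2}k$, which turns the degenerate/singular equation into the semilinear one $pw''+(p-1)|w|^{2/(p-1)}w-\lambda|w|^{(2-p)/(p-1)}w=0$ (with the nontrivial regularity $w\in C^2$ supplied by Lemma \ref{lem:0812}), and the conservation law $p^2w'^2+F(w)=\mathrm{const}$, which is exactly your first integral after multiplying by $2p(p-1)$; the reconstruction of $\gamma$ via half-angle identities is also identical to the paper's computation. Where you diverge is in how the curvature ODE is solved: you propose phase-plane analysis plus separation of variables and inversion of the quadrature, whereas the paper verifies directly (Theorem \ref{thm:cndn}) that $A\cn_p(\alpha s+\beta,q)$ and $A\dn_p(\alpha s+\beta,q)$ solve the equation for suitable coefficients and then concludes by uniqueness of the initial value problem for $w$ (Lemma \ref{lem:unique_ini}), which for $p\le2$ holds unconditionally because $\dot F$ is locally Lipschitz on all of $\R$ precisely when $\tfrac{1}{p-1}\ge1$. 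The verify-plus-uniqueness route avoids the bookkeeping of the sign of $\pm\,ds$ at turning points of $k$ and extends uniformly to $p>2$, where uniqueness fails only at the Hamiltonian level $D_0=0$ and produces the flat cores; your quadrature route is equally viable for $p\le2$ provided you carry out the matching of branches across critical points and zeros of $k$, which you correctly identify as the delicate part.

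One point to sharpen: your stated mechanism for excluding flat cores when $p\le2$ --- that $|k|^{4-2p}$ is bounded near $k=0$ so $v$ crosses each zero of $k$ transversally --- only covers the wavelike orbits ($E>0$), where transversality ($v'\neq0$ at zeros of $k$) holds for \emph{every} $p$. The orbit that actually threatens to produce flat cores is the separatrix $E=0$, on which $v$ and $v'$ would vanish simultaneously at any zero of $k$, so transversality is unavailable there. The correct reason this is harmless for $p\le2$ is that the quadrature time to reach $k=0$ along the separatrix, $\int_0 dk/\sqrt{|k|^{4-2p}\Phi(|k|^p)}\sim\int_0 dk/|k|^{2-p/2}$, diverges (equivalently $\K_p(1)=\infty$), so the borderline curvature never vanishes in finite arclength; in the paper's language, $\dot F$ is Lipschitz at the origin for $p\le2$, so the only solution with $w=w'=0$ somewhere is $w\equiv0$. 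This fits inside your framework, but it is a different estimate from the transversality you invoke.
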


On the other hand, if $p>2$, the degeneracy yields flat-core solutions in the case corresponding to the borderline $p$-elastica for $p\leq2$.
For describing this family, we introduce the following notation on a simple concatenation of curves:
For $\gamma_j:[a_j,b_j]\to\R^2$ with $L_j:=b_j-a_j\geq0$, we define $\gamma_1\oplus\gamma_2:[0,L_1+L_2]\to\R^2$ by
\begin{align*}
  (\gamma_1\oplus\gamma_2)(s) :=
  \begin{cases}
    \gamma_1(s+a_1), \quad & s \in[0, L_1], \\
    \gamma_2(s+a_2-L_1) +\gamma_1(b_1)-\gamma_2(a_2),  & s \in[L_1,L_1+L_2],
  \end{cases}
\end{align*}
 and inductively define $\gamma_1\oplus\dots\oplus\gamma_N := (\gamma_1\oplus\dots\oplus\gamma_{N-1})\oplus\gamma_N$.
 We also write
\begin{align*}
     \bigoplus_{j=1}^N\gamma_j:=\gamma_1\oplus\dots\oplus\gamma_N.
\end{align*}

\begin{theorem}[Explicit formulae for planar $p$-elasticae: $p>2$]\label{thm:Formulae_2}
  Let $p\in(2,\infty)$ and $\gamma$ be a $p$-elastica in $\R^2$.
  Then up to similarity and reparametrization, the curve $\gamma$ is represented by $\gamma(s)=\gamma_*(s+s_0)$ with some $s_0\in\R$, where either $\gamma_*:\R\to\R^2$ is one of the four arclength parametrizations in Cases I, II, IV, and V of Theorem \ref{thm:Formulae_1}, or $\gamma_*=\gamma_f:[0,L]\to\R^2$ is the following arclength parametrization:
  \begin{itemize}
    \item \textup{(Case III' --- Flat-core $p$-elastica)}
    For some integer $N\geq1$, signs $\sigma_1,\dots,\sigma_N\in\{+,-\}$, and nonnegative numbers $L_1,\dots,L_N\geq0$,
    \begin{equation}\label{eq:EP3-2}
      \gamma_f = \bigoplus_{j=1}^N (\gamma_\ell^{L_j}\oplus \gamma_b^{\sigma_j}),
    \end{equation}
    where $\gamma_b^\pm:[-\K_p(1),\K_p(1)]\to\R^2$ and $\gamma_\ell^{L_j}:[0,L_j]\to\R^2$ are defined by
    \begin{align}\label{eq:pm-border}
      \gamma_b^{\pm}(s) =
      \begin{pmatrix}
      2 \tanh_p{s} - s  \\
      \mp \frac{p}{p-1}(\sech_p{s})^{p-1}
      \end{pmatrix},
      \quad
      \gamma_\ell^{L_j}(s) =
      \begin{pmatrix}
      -s  \\
      0
      \end{pmatrix}.
    \end{align}
    The curves $\gamma_b^{\pm}(s)$ have $\theta_b^\pm(s)=\pm2\am_{1,p}(s,1)=\pm2\am_{2,p}(s,1)$ and $k_b^\pm(s) = \pm2\sech_p{s}$ for $s\in [-\K_p(1),\K_p(1)]$.
  \end{itemize}
\end{theorem}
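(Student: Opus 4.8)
The plan is to reduce to a single conserved quantity and then isolate the degenerate mechanism that produces flat cores. Write $\bm{t}=\partial_s\gamma=(\cos\theta,\sin\theta)$, $\bn=(-\sin\theta,\cos\theta)$, $k=\partial_s\theta$, and $m:=|k|^{p-2}\partial_s k$. The starting point, common to all $p$, is the conservation law: testing Definition \ref{def:p-elastica} with translational fields $\eta=\varphi\,\bm{e}$ ($\bm{e}\in\R^2$ constant, $\varphi\in C^\infty_{\rm c}$) and integrating by parts shows, in the weak sense, that the \emph{force vector}
\[
\bm{F}:=\big((p-1)|k|^p-\lambda\big)\bm{t}+p(p-1)\,m\,\bn
\]
is constant along $\gamma$ (formally this is the divergence form of \eqref{Teq:sw20-1.2}, using $\partial_s\bm{t}=k\bn$ and $\partial_s\bn=-k\bm{t}$). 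Establishing this rigorously in $W^{2,p}$ is one of the two delicate preliminaries; the other is the ensuing bootstrap. Indeed, normalizing by a rotation to $\bm{F}=(c,0)$ with $c:=|\bm{F}|\ge0$, the tangential and normal projections yield
\[
|k|^p=\frac{\lambda+c\cos\theta}{p-1},\qquad m=|k|^{p-2}\partial_s k=\frac{-c\sin\theta}{p(p-1)},
\]
and the first identity already forces $k\in L^\infty$, after which standard arguments promote $k$ and $m$ to continuous functions and give the asserted optimal regularity.

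With these identities in hand, $k=\partial_s\theta$ reduces $\gamma$ to the separable first-order ODE $\partial_s\theta=\pm\big((\lambda+c\cos\theta)/(p-1)\big)^{1/p}$, whose integration through the $p$-elliptic integrals of Definitions \ref{def:K_p}--\ref{def:tan_p} produces the explicit profiles exactly as in the proof of Theorem \ref{thm:Formulae_1}. Since $\bm{F}$ is \emph{globally} constant, the pair $(\lambda,c)$ is a global invariant and a single regime governs the entire curve: $c=0$ gives the linear or circular case, $\lambda>c>0$ the orbitlike case, $|\lambda|<c$ (with $c>0$) the wavelike case, $\lambda\le-c$ degenerates to the linear case, and $\lambda=c>0$ is the borderline regime.

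The second identity is what distinguishes $p>2$. A zero of $k$ can occur only where $\cos\theta=-\lambda/c$, and there $m=-c\sin\theta/(p(p-1))$; thus $m$ also vanishes at that zero \emph{iff} $\sin\theta=0$, i.e.\ iff $\lambda=c$ and $\theta=\pi$. In the non-borderline regimes $\sin\theta\ne0$ at every zero of $k$, so $m\ne0$ there, $k$ crosses zero transversally, its zero set is discrete, and the integration gives precisely Cases I, II, IV, and V. In the borderline regime $\lambda=c>0$, however, a zero of $k$ forces simultaneously $k=0$ and $m=0$; since the weak equation requires only continuity of $m$, at such a point one may insert a straight segment (where $k\equiv0$, $m\equiv0$, $\theta\equiv\pi$) and afterwards resume bending with either sign. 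This is exactly the non-uniqueness of $\partial_s\theta=\pm\big(c(1+\cos\theta)/(p-1)\big)^{1/p}$ at $\theta=\pi$, where the right-hand side vanishes like $|\theta-\pi|^{2/p}$: the constant branch $\theta\equiv\pi$ detaches precisely when $2/p<1$, i.e.\ $p>2$. The same exponent controls the length of one bending arc, $\int_0^{\pi}\big(c(1+\cos\theta)/(p-1)\big)^{-1/p}\,d\theta$, which is finite (equal to $\K_p(1)$ after normalization) iff $p>2$; hence each maximal arc with $k\ne0$ is a \emph{finite} loop, a translate of $\gamma_b^{\pm}$ on $[-\K_p(1),\K_p(1)]$ as in \eqref{eq:pm-border}. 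Collecting the loops (signs $\sigma_j$) and the intervening straight segments (lengths $L_j\ge0$) yields the concatenation \eqref{eq:EP3-2}.

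The main obstacle is the rigorous treatment of the degeneracy at $\{k=0\}$ under the mere $W^{2,p}$ hypothesis. Beyond the two preliminaries above (weak constancy of $\bm{F}$ and the regularity bootstrap), I expect the crux to be: (i) showing that $\{k=0\}$ decomposes into isolated points together with the closed intervals carrying the straight segments; and (ii) proving that the \emph{only} curvature profile able to reach $k=0$ with $m\to0$ from one side is the borderline loop, so that no wavelike or orbitlike piece can attach to a flat part. This last point, together with the sharp dichotomy $\K_p(1)<\infty\Leftrightarrow p>2$, is the technical heart; once it is secured, the combinatorial freedom in $(N,\sigma_j,L_j)$ and the verification that each concatenation \eqref{eq:EP3-2} is conversely a $p$-elastica (both $k$ and $m$ vanish across every junction, and $\bm{F}$ stays continuous) are routine.
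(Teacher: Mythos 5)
Your route is correct in outline but genuinely different from the paper's. You integrate the translation invariance once to get the constant force vector $\mathbf{F}=((p-1)|k|^p-\lambda)\mathbf{t}+p\,\partial_s(|k|^{p-2}k)\,\mathbf{n}$ and then work with the resulting \emph{first-order} pendulum-type equation $\partial_s\theta=\pm\big((\lambda+c\cos\theta)/(p-1)\big)^{1/p}$ for the tangential angle; the paper instead substitutes $w:=|k|^{p-2}k$, proves $w\in C^2$ (Lemma \ref{lem:0812}), and classifies the \emph{second-order} semilinear Cauchy problem \eqref{eq:CEL} via the Hamiltonian $p^2w'^2+F(w)=D_0$ (Theorem \ref{thm:k-classify}), only afterwards integrating $\theta=\int k$. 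The two are tightly linked: your tangential and normal projections are exactly $(p-1)|w|^{\frac{p}{p-1}}-\lambda=c\cos\theta$ and $pw'=-c\sin\theta$, so $D_0=c^2-\lambda^2$ and your regime dichotomy ($|\lambda|<c$ wavelike, $\lambda>c>0$ orbitlike, $\lambda=c>0$ borderline/flat-core, etc.) reproduces the paper's conditions (i)--(v). What your approach buys is a transparent geometric mechanism for the flat cores — the Osgood/non-uniqueness threshold $2/p<1$ at $\theta=\pi$ and the finiteness of $\int_0^\pi(1+\cos\theta)^{-1/p}d\theta$, both equivalent to $p>2$ and to $\K_p(1)<\infty$ — and it delivers $\theta$ directly rather than through a second integration. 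What the paper's approach buys is that the hard analytic steps you defer are discharged in one stroke: the $C^2$ regularity of $w$ makes the ODE classical, standard Picard--Lindel\"of handles every non-degenerate regime, and the rigidity at the degenerate set (your step (ii), that only the borderline loop can attach to a flat part) is Lemma \ref{lem:flat-core}, proved by matching $(w,w')$ against the explicit profile $\pm(2A_{p,\lambda}\sech_p)^{p-1}$ on each connected component of $\{w\neq0\}$ and invoking uniqueness there.

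Two of your deferred points deserve explicit care if you carry this out. First, the pointwise identities must be extracted from the weak form \eqref{eq:0525-1}: testing with $\varphi\,e$ gives $\mathbf{F}=\mathrm{const}$ only as an $L^1$-identity, and you then need continuity of $k$ (not just of $|k|$, which follows from $|k|^p=(\lambda+c\cos\theta)/(p-1)$ and $\theta\in C^0$) and the $W^{1,1}$-regularity of $|k|^{p-2}k$ before you may speak of maximal intervals where $k\neq0$ and of the sign of $\partial_s\theta$ being locally constant; this is precisely the content of Proposition \ref{prop:p-ELk} and Lemma \ref{lem:0812}. Second, for the converse direction it is not enough that $\mathbf{F}$ is continuous across junctions: you need $|k|^{p-2}k\,\mathbf{n}\in W^{1,1}$ globally so that the integration by parts defining the weak equation is legitimate, which is why the paper checks in Proposition \ref{prop:sech-EL} that both $|k|^{p-2}k$ and its derivative vanish at the endpoints of each loop. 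With these two points supplied, your argument closes.
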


\begin{figure}[htbp]
  \begin{center}
      \includegraphics[scale=0.4]{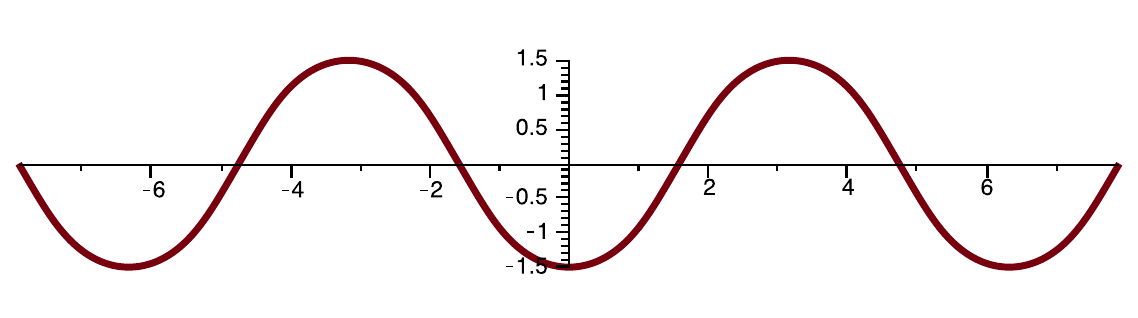}\\
      \includegraphics[scale=0.4]{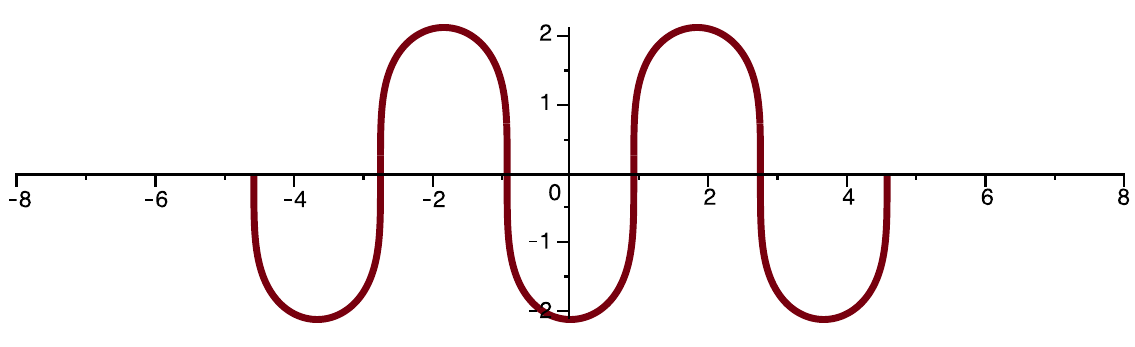}\\
      \includegraphics[scale=0.4]{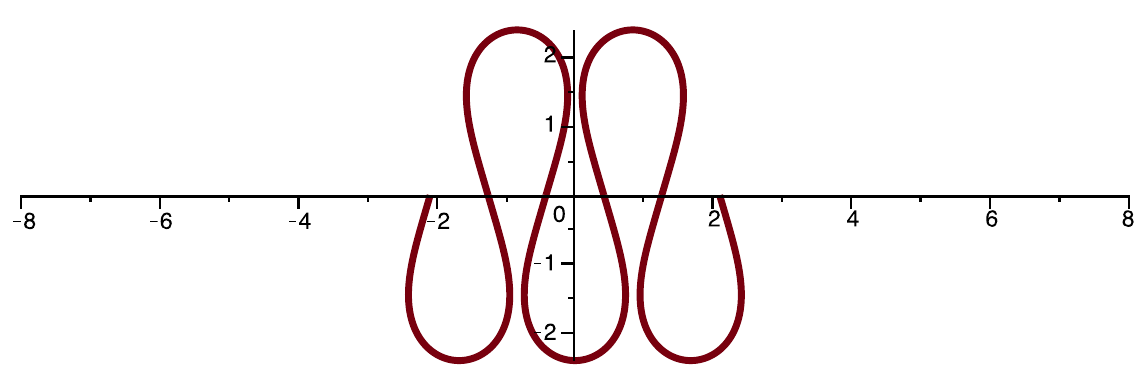}\\
      \includegraphics[scale=0.4]{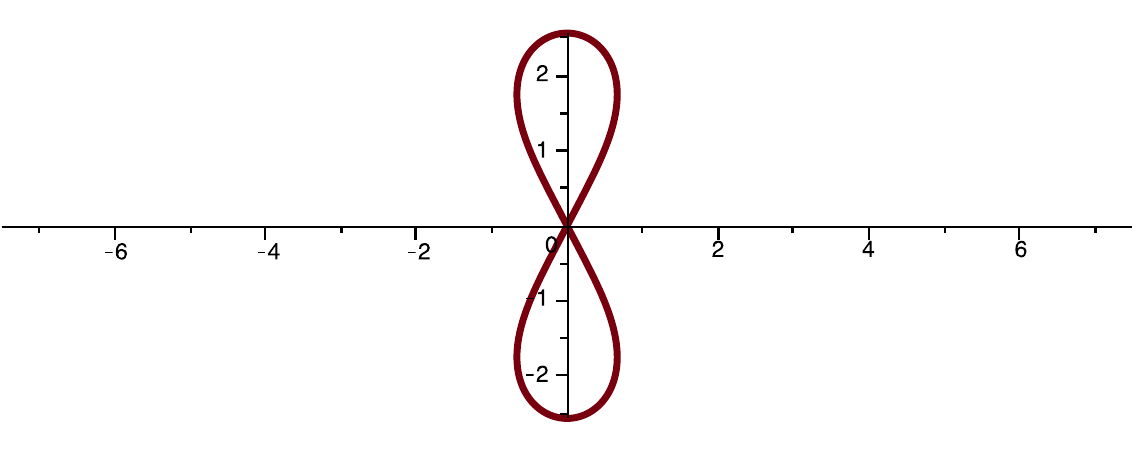}\\
      \includegraphics[scale=0.4]{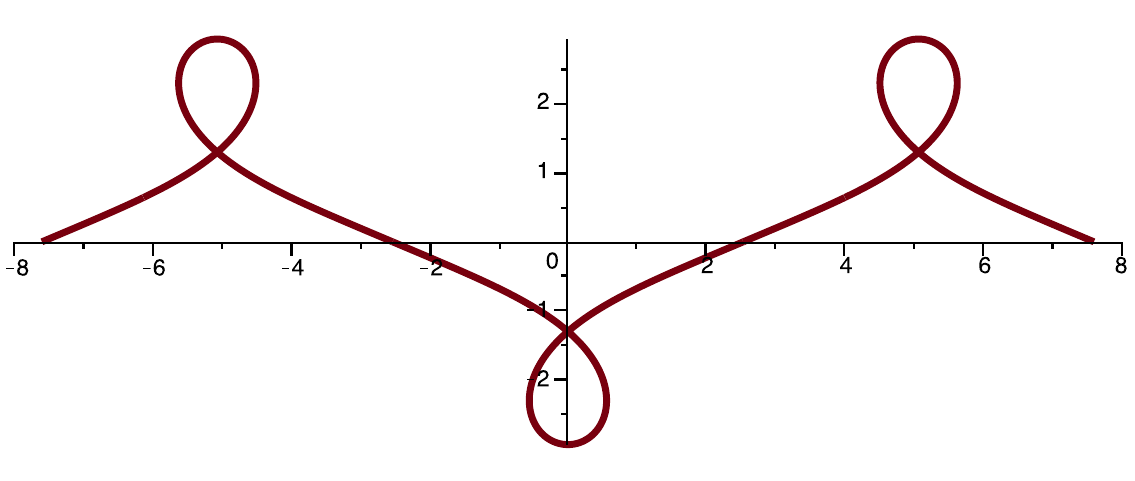}
   \caption{Wavelike $\frac{3}{2}$-elasticae. The modulus $q\in(0,1)$ increases from top to bottom; $q=0.5$, $q=0.7071$ ($\approx\frac{1}{\sqrt{2}}$), $q=0.8$, $q=0.8552$ ($\approx q^*(\frac{3}{2})$), and $q=0.98$, cf.\ Remark \ref{rem:freee} for $q=\frac{1}{\sqrt{2}}$ and Definition \ref{def:N-fold} for $q^*(p)$.}
   \label{fig:classifywave}
  \end{center}
\end{figure}
 
\begin{figure}[htbp]
 \begin{center}
     \includegraphics[scale=0.4]{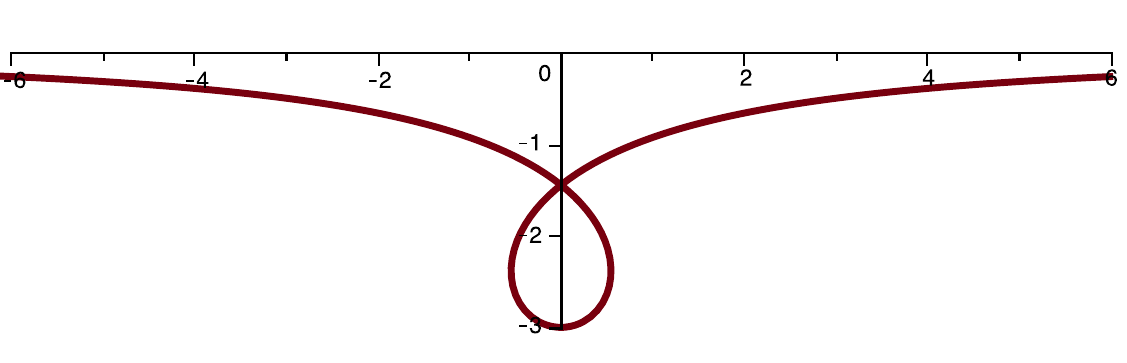}
     \caption{Borderline $\frac{3}{2}$-elastica.}
     \label{fig:classifyborder}
 \end{center}
\end{figure}

\begin{figure}[htbp]
 \begin{center}
      \includegraphics[scale=0.4]{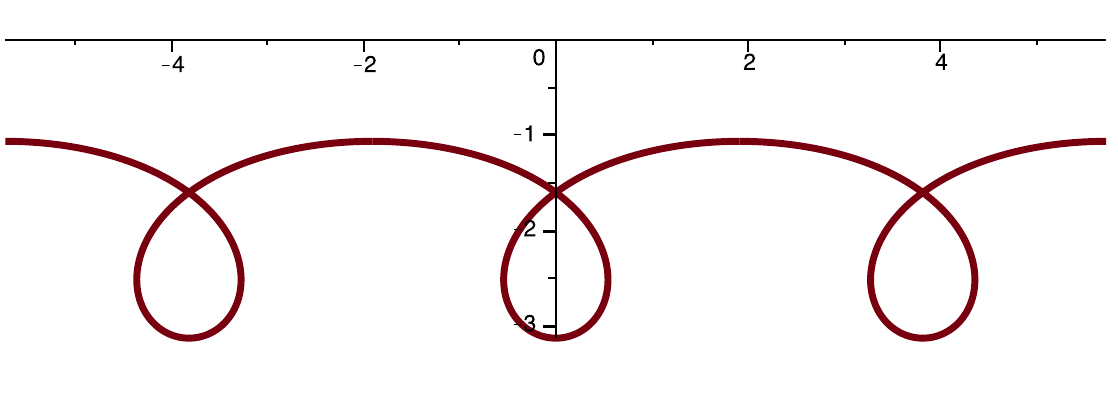}\\
      \includegraphics[scale=0.4]{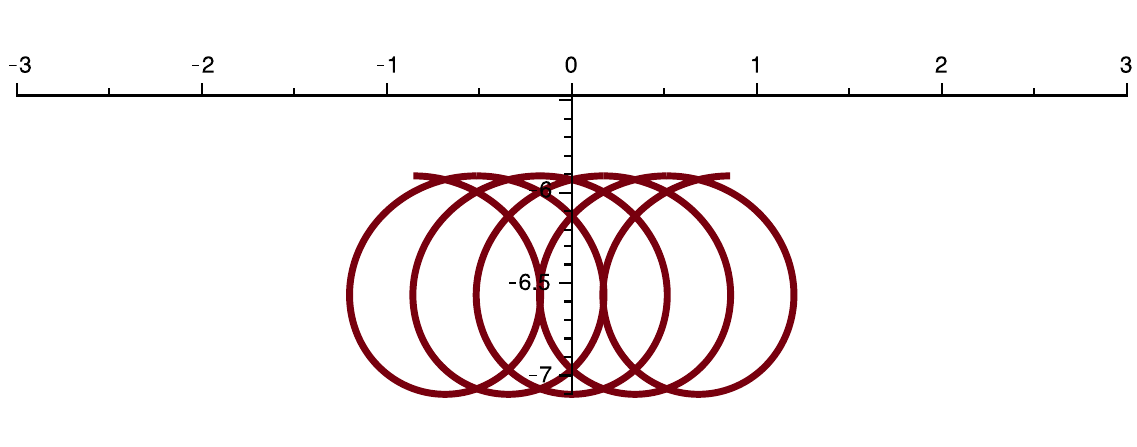}
  \caption{Orbitlike $\frac{3}{2}$-elasticae with modulus 
  $q=0.98$ (top) and $0.65$ (bottom).}
  \label{fig:classifyorbitlike}
 \end{center}
\end{figure}

\begin{figure}[htbp]
  \begin{center}
      \includegraphics[scale=0.25]{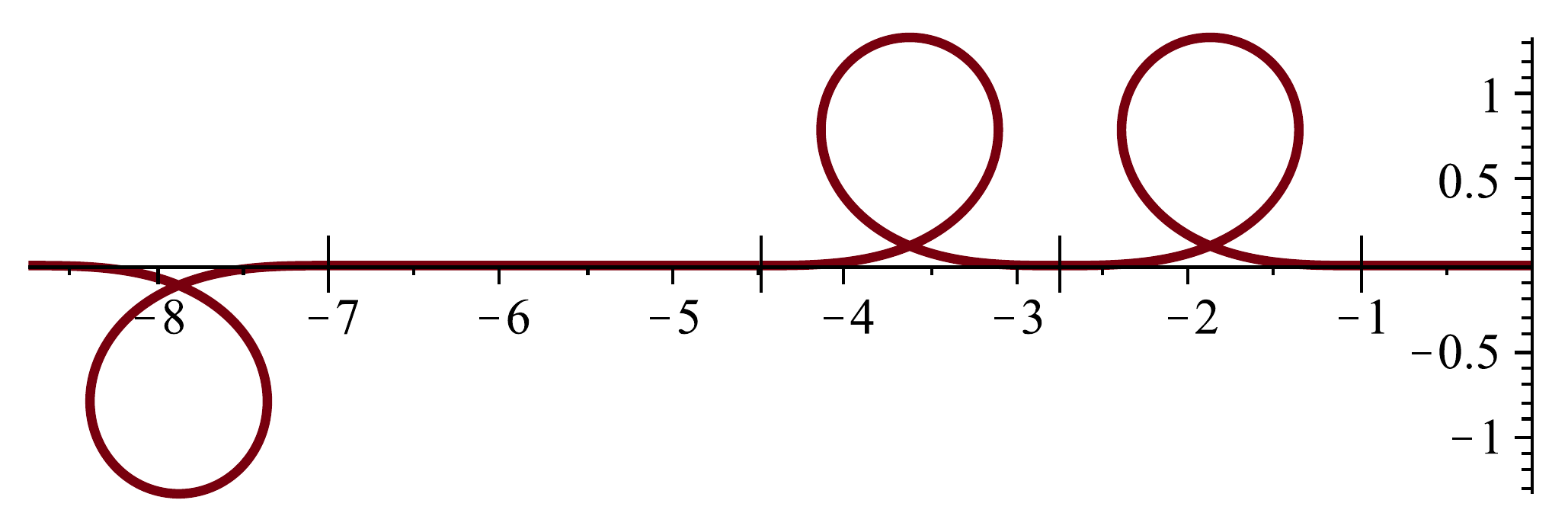}
    \caption{An example of a flat-core $4$-elastica.}
    \label{fig:classifyflatcore}
  \end{center}
\end{figure}

\begin{remark}
  The profile curves $\gamma_\ell,\gamma_w,\gamma_b,\gamma_o,\gamma_c$ are defined on the whole $\R$, while the exceptional case $\gamma_f$ with $p>2$ has finite length $L=2N\K_p(1)+\sum_{j=1}^N L_j$ depending not only on $p$ but also on the choice of $N$ and $L_j$'s above.
  This is just due to our convention; one may suitably extend $\gamma_f$ to $\R$ if needed.
\end{remark}

\begin{remark}
  All the parametrizations of $\gamma_\ell,\gamma_w,\gamma_b,\gamma_o,\gamma_c$ and also $\gamma_b^\pm$ are chosen so that at $s=0$ the position lies in the $y$-axis and the tangential direction is rightward.
  Due to this convention, the curve $\gamma_f$ chases its profile ``from right to left''; for example, in Figure \ref{fig:classifyflatcore} our parametrization starts from the right-hand endpoint (the origin).
\end{remark}

\begin{remark}[Dependence on $\lambda$ and free $p$-elasticae]\label{rem:freee}
  We also have some relations between the above classifications and the multiplier $\lambda\in\R$, although this fact is omitted in the above statements just for simplicity.
  The trivial linear case always occurs regardless of the value of $\lambda$, while the other nontrivial cases do depend on $\lambda$.
  A particular restriction occurs for the \emph{free $p$-elasticae} corresponding to the case $\lambda=0$ (see \cite{LS_JDG, Lin93, Miura21} for $p=2$).
  In fact, any free $p$-elastica belongs to the wavelike case with modulus $q=\frac{1}{\sqrt{2}}$ (or the trivial linear case), and this nontrivial free $p$-elastica can be represented by (a part of) the graph of an antiperiodic function, with vertical slopes at the zeroes, cf.\ Figure \ref{fig:classifywave} (second).
  Indeed, in this case the angle $\theta_w(s)=2\arcsin(\frac{1}{\sqrt{2}}\sn_p(s,\frac{1}{\sqrt{2}}))$ oscillates between $\frac{\pi}{2}$ and $-\frac{\pi}{2}$.
  Such a geometric uniqueness is known to be useful in the study of obstacle problems \cite{Miura21, DMOY}.
  In addition, the case $\lambda<0$ corresponds to wavelike elasticae with $q<\frac{1}{\sqrt{2}}$, where the profile curve $\gamma_w$ is entirely the graph of a smooth function.
  The case $\lambda>0$ consists of all the other cases.
  For more details, see Theorem \ref{thm:k-classify}, which gives a fully detailed classification involving $\lambda$, thus extending \cite[Proposition 3.3]{Lin96} from $p=2$ to $p\in(1,\infty)$.
\end{remark}

Now we turn to the regularity of planar $p$-elasticae.
We begin with a soft but general regularity statement, which ensures that any $p$-elastica is of class $W^{3,1}$: 

\begin{theorem}[General regularity]\label{thm:Soft_regularity}
  Let $p\in(1,\infty)$, $L>0$, and $\gamma:[0,L]\to\R^2$ be an arclength parametrized $p$-elastica.
  Then $\gamma \in W^{3,1}(0,L;\R^2)$.
  In particular, $\gamma \in C^{2}([0,L];\R^2)$.
  In addition, $\gamma$ is analytic on $[0,L]\setminus \partial Z$, where $Z:=\Set{ s\in [0,L] | k(s)=0 }$
  and $\partial Z$ denotes the relative topological boundary in $[0,L]$.
\end{theorem}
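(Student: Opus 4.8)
The plan is to reduce the vectorial problem to the scalar curvature via the conserved ``force'' coming from translational invariance, and then bootstrap. Throughout write $\mathbf t=\pd_s\gamma=(\cos\theta,\sin\theta)$ and $\mathbf n=(-\sin\theta,\cos\theta)$, so that $\pd_s\mathbf t=k\mathbf n$, $\pd_s\mathbf n=-k\mathbf t$, $k=\pd_s\theta$. Since $\gamma\in W^{2,p}$ we have $\theta\in W^{1,p}(0,L)\hookrightarrow C^0$ and $k\in L^p$, so $\mathbf t,\mathbf n\in C^0$ and $\gamma\in C^1$. The crucial device is the substitution $w:=|k|^{p-2}k$, for which $|w|=|k|^{p-1}$, $wk=|k|^p$, and $k=\sgn(w)|w|^{1/(p-1)}=:g(w)$ with $g$ a continuous increasing bijection of $\R$; note $w\in L^{p/(p-1)}$ and $|k|^p=|w|^{p/(p-1)}\in L^1$. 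I would first extract the weak Euler--Lagrange equation from Definition \ref{def:p-elastica}. Since $p>1$, the map $t\mapsto|t|^p$ is $C^1$, so the first variation is justified; carrying out the (standard) computation of $\delta\mathcal B_p+\lambda\delta\mathcal L$ with test fields $\eta=\phi\mathbf v$ ($\phi\in C^\infty_{\rm c}(0,L)$, $\mathbf v\in\R^2$ constant) and integrating by parts so that all derivatives fall on $\phi$ yields
\begin{equation*}
  \int_0^L p\,w\,\langle\mathbf n,\mathbf v\rangle\,\phi''\,ds
  =\int_0^L\big((2p-1)|k|^p-\lambda\big)\langle\mathbf t,\mathbf v\rangle\,\phi'\,ds\qquad(\forall\,\phi,\mathbf v),
\end{equation*}
which involves only $w$ and $|k|^p$, and no derivative of $k$. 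Interpreted distributionally, this says that
\begin{equation*}
  \mathbf F:=(p\,w\,\mathbf n)'+\big((2p-1)|k|^p-\lambda\big)\mathbf t \quad\text{is a constant vector in }\R^2 .
\end{equation*}
Using $(pw\mathbf n)'=pw'\mathbf n-p|k|^p\mathbf t$ one recovers the familiar form $\mathbf F=pw'\mathbf n+((p-1)|k|^p-\lambda)\mathbf t$ with $w'=(p-1)|k|^{p-2}\pd_s k$, whose normal component reproduces the divergence form $p(p-1)\pd_s(|k|^{p-2}\pd_s k)+(p-1)|k|^pk-\lambda k=0$ of \eqref{Teq:sw20-1.2}, a useful sanity check.

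The $C^2$ regularity and a first integral now follow quickly. Since $((2p-1)|k|^p-\lambda)\mathbf t\in L^1$ and $\mathbf F$ is constant, the identity $(pw\mathbf n)'=\mathbf F-((2p-1)|k|^p-\lambda)\mathbf t$ shows $w\mathbf n\in W^{1,1}(0,L;\R^2)\hookrightarrow C^0$; pairing with the continuous $\mathbf n$ gives $w=\langle w\mathbf n,\mathbf n\rangle\in C^0$, hence $k=g(w)\in C^0$ by continuity of $g$. Thus $\gamma''=k\mathbf n\in C^0$, so $\gamma\in C^2$. With $k\in C^0$ we have $\mathbf t,\mathbf n\in C^1$, so $w=\langle w\mathbf n,\mathbf n\rangle$ is a product of a $W^{1,1}$ and a $C^1$ function, whence $w\in W^{1,1}$ with $w'\in L^1$ and $\mathbf F=pw'\mathbf n+((p-1)|k|^p-\lambda)\mathbf t$. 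Taking squared norms and using that $|\mathbf F|^2$ is constant yields the first integral
\begin{equation*}
  p^2(w')^2=|\mathbf F|^2-\big((p-1)|k|^p-\lambda\big)^2=:G(k)\ge0\quad\text{a.e.},
\end{equation*}
whose right-hand side is a continuous bounded function of $s$; hence $w'\in L^\infty$ and $w$ is in fact Lipschitz.

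The last and hardest point is $\gamma\in W^{3,1}$, i.e.\ $k'\in L^1$ (then $\gamma'''=k'\mathbf n-k^2\mathbf t\in L^1$), and I expect this to be the main obstacle. For $p\le2$ it is immediate: $g$ is locally Lipschitz there (its exponent $1/(p-1)\ge1$), so $k=g(w)\in W^{1,\infty}\subset W^{1,1}$. For $p>2$ the map $g$ is not Lipschitz at $0$ and $k'$ genuinely blows up at zeros of $k$, so the argument must be quantitative. Here I would use the first integral pointwise in the form $|k'|=\sqrt{G(k)}\,/\,(p(p-1)|k|^{p-2})$, valid a.e.\ on $\{k\neq0\}$, together with a change of variables $s\mapsto w$ (equivalently the coarea formula): on $\{k\neq0\}$,
\begin{equation*}
  \int|k'|\,ds=\int|g'(w)|\,|w'|\,ds=\int|g'(w)|\,|dw|,\qquad |g'(w)|=\tfrac1{p-1}|w|^{(2-p)/(p-1)},
\end{equation*}
and the exponent $(2-p)/(p-1)>-1$ for every $p$, so the integral converges near $w=0$. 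The candidate derivative is taken to vanish on $\{k=0\}$, and a standard Sobolev chain-rule argument identifies it with $k'$. What requires care is that the zero set may be complicated. By the autonomous relation $w'=\pm\frac1p\sqrt{G(g(w))}$: at a \emph{transversal} zero one has $G(0)=|\mathbf F|^2-\lambda^2>0$, so $w$ is strictly monotone across it, the zero is isolated (yielding finitely many on $[0,L]$ and the local profile $|k|\sim|s-s_0|^{1/(p-1)}$ with integrable $k'$), while \emph{degenerate} zeros $G(0)=0$ either lie on flat-core arcs where $k\equiv0$ (no contribution) or are the boundary points of $\partial Z$, whose approach to $0$ is again governed by the same autonomous equation. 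Assembling these estimates gives $k'\in L^1$ and hence $\gamma\in W^{3,1}$.

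Finally, analyticity off $\partial Z$. On the open set $\{k\neq0\}$ the first integral reads $p^2(p-1)^2|k|^{2(p-2)}(k')^2=G(k)$ with real-analytic right-hand side in $k$, a nondegenerate autonomous first-order ODE; standard analytic ODE theory (or inverting $s=\int dk/(\cdots)$ on the monotone branches between consecutive critical points of $k$) shows $k$ is real-analytic there, whence $\theta=\int k$ and $\gamma=\int\mathbf t$ are real-analytic. On $\mathrm{int}\,Z$ we have $k\equiv0$, so $\gamma$ is an affine segment, trivially analytic. Since $Z=\{k=0\}$ is closed and $[0,L]\setminus\partial Z=\{k\neq0\}\sqcup\mathrm{int}\,Z$, the curve $\gamma$ is analytic on $[0,L]\setminus\partial Z$, completing the proof.
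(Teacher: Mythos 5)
Your proposal is correct in substance but follows a genuinely different route from the paper. The paper proves this theorem as a corollary of the full classification: it first derives the scalar weak Euler--Lagrange equation \eqref{eq:EL} (Proposition \ref{prop:p-ELk}), upgrades $w=|k|^{p-2}k$ to $C^2$ by a bootstrap on the semilinearized equation (Lemma \ref{lem:0812}), classifies \emph{all} solutions in terms of $\cn_p$, $\dn_p$, $\sech_p$ (Theorem \ref{thm:k-classify} and Theorems \ref{thm:Formulae_1}--\ref{thm:Formulae_2}), and only then reads off $W^{3,1}$, $C^2$, and analyticity from the explicit regularity of those special functions (Propositions \ref{thm:regularity_dn}, \ref{thm:regularity_cn}, \ref{thm:regularity_sech}). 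You instead exploit translational invariance to produce the conserved force vector $\mathbf F=pw'\mathbf n+((p-1)|k|^p-\lambda)\mathbf t$ directly from the vectorial weak equation \eqref{eq:0525-1} tested against $\phi\mathbf v$, which immediately gives $w\mathbf n\in W^{1,1}\hookrightarrow C^0$, hence $w\in C^0$, $k=g(w)\in C^0$, $\gamma\in C^2$, and in fact $w\in C^1$ together with the first integral $p^2(w')^2=|\mathbf F|^2-((p-1)|k|^p-\lambda)^2$ (which is exactly the paper's conservation law \eqref{eq:0610-4} shifted by $\lambda^2$). This is more self-contained and avoids the classification entirely; what it gives up is the quantitative information (the exact exponents $m_p,r_p,M_p,R_p$) that the paper's route delivers for free.

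Two points deserve tightening. First, for the analyticity on $\{k\neq0\}$ your appeal to the \emph{first-order} equation $p^2(p-1)^2|k|^{2(p-2)}(k')^2=G(k)$ is insufficient at interior critical points of $k$ (where $k\neq0$ but $k'=0$, i.e.\ $G(k)=0$): a degenerate first integral such as $(y')^2=1-y^2$ admits non-analytic solutions obtained by gluing, so one must revert to the second-order autonomous equation $2p^2w''+\dot F(w)=0$, whose right-hand side is analytic in $w$ away from $w=0$, to get analyticity of $w$ (hence of $k=g(w)$) across such points; the constant-curvature case also has to be handled separately since there the first integral is vacuous. Second, your $W^{3,1}$ argument for $p>2$ near zeros of $k$ is a sketch: it does close, because at a transversal zero ($G(0)>0$) the local profile $|k|\sim|s-s_0|^{1/(p-1)}$ gives $|k'|\sim|s-s_0|^{1/(p-1)-1}\in L^1$, and at a degenerate zero ($G(0)=0$) the solution of the autonomous ODE on the adjacent nonzero component is forced to be a $\sech_p$-profile with $|k|\sim|s-s_0|^{2/(p-2)}$, again with integrable derivative --- but the identification of the degenerate case with the $\sech_p$-profile requires an argument of the type carried out in Lemma \ref{lem:flat-core}, so it is not quite free.
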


In fact, our explicit formulae imply that, except in the flat-core case, the set $Z$ is always discrete and hence $Z=\partial Z$.

Hence for investigating the optimal (loss of) regularity it is sufficient to look at $p$-elasticae whose curvatures have nontrivial zero sets, i.e., $\partial Z\neq\emptyset$.
In the case of $p\leq2$ the loss of regularity occurs only in the wavelike case:

\begin{theorem}[Optimal regularity: $p\leq2$]\label{thm:Optimal_regularity_1}
  Let $p\in(1,2]$, $L>0$, and $\gamma:[0,L]\to\R^2$ be an arclength parametrized $p$-elastica with $Z=\partial Z\neq\emptyset$.
  If $\gamma$ is either linear, borderline, orbitlike, or circular, then
  $\gamma$ is analytic on $[0,L]$.
  In addition, if $\gamma$ is wavelike, then the following statements hold:
  Let $m_p:=\lceil \tfrac{1}{p-1} \rceil\geq1$ and $r_p:= (m_p-\tfrac{1}{p-1} )^{-1}>1$, where $\lceil \cdot \rceil$ stands for the ceiling function.
  \begin{itemize}
    \item[(i)] If $\frac{1}{p-1}$ is an odd integer, then $\gamma$ is analytic on $[0,L]$.
    \item[(ii)] If $\frac{1}{p-1}$ is an even integer, then
    $\gamma\in W^{m_p+2,\infty}(0,L;\R^2).$
    If in addition $Z\cap (0,L)\neq\emptyset$, then
    $\gamma\not\in W^{m_p+3,1}(0,L;\R^2).$
    \item[(iii)] If $\frac{1}{p-1}$ is not an integer, then $\gamma\in W^{m_p+2,r}(0,L;\R^2)$ for any $r\in[1,r_p)$, and $\gamma\not\in W^{m_p+2,r_p}(0,L;\R^2)$.
  \end{itemize}
\end{theorem}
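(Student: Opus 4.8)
\emph{Reduction to the wavelike case.} The plan is to read off the regularity directly from the explicit formulae of Theorem~\ref{thm:Formulae_1}, using that the classes $C^\omega$ and $W^{m,r}$ are preserved by similarity and by the shift $s\mapsto s+s_0$. For the linear and circular profiles the curvature is constant ($k_\ell\equiv0$, $k_c\equiv1$) and $\gamma$ is a segment or a circular arc, hence analytic. For the borderline and orbitlike profiles the curvatures $k_b=2\sech_p s$ and $k_o=2\dn_p(s,q)$ are analytic and strictly positive, so $Z=\emptyset$ and Theorem~\ref{thm:Soft_regularity} already yields analyticity on all of $[0,L]$. Hence, under the standing hypothesis $Z=\partial Z\neq\emptyset$ with $p\le2$ (no flat cores), the only genuine case is the wavelike one, which I treat from now on.

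\emph{From $\gamma_w$ to its curvature.} Write $\partial_s\gamma_w=(\cos\theta_w,\sin\theta_w)$ with $\theta_w'=k_w=2q\cn_p(\cdot,q)$; the zero set $Z$ is the discrete, periodic zero set of $\cn_p$, and by Theorem~\ref{thm:Soft_regularity} $\gamma_w$ is analytic off $Z$. Since $[0,L]$ is compact it suffices to work near one zero $s_0$. As $\theta_w\in C^1$ with $k_w$ bounded, Fa\`a~di~Bruno gives that the single-block term in $\partial_s^{\,j}(\cos\theta_w,\sin\theta_w)$ is $(-\sin\theta_w,\cos\theta_w)\,k_w^{(j-1)}$, while every multi-block term is a product of lower-order derivatives $k_w^{(i)}$ whose local exponent near $s_0$ strictly exceeds that of the single-block term. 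Consequently $\gamma_w\in W^{m+2,r}$ near $s_0$ if and only if $k_w\in W^{m,r}$ near $s_0$; and since $(\cos\theta_0,\sin\theta_0)\ne0$ at $\theta_0:=\theta_w(s_0)$, the most singular contribution $k_w^{(m)}$ is transmitted without cancellation to at least one component of $\partial_s\gamma_w$, which will give optimality.

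\emph{Local profile of $k_w$.} Everything thus reduces to the exact regularity of $k_w$ at $s_0$. Set $w:=|k_w|^{p-2}k_w$ and $\alpha:=\tfrac1{p-1}$, so that $k_w=\sgn(w)|w|^{\alpha}$, and the Euler--Lagrange equation \eqref{Teq:sw20-1.2} becomes the autonomous equation
\[
  p\,w'' + (p-1)\,\sgn(w)|w|^{1+2\alpha} - \lambda\,\sgn(w)|w|^{\alpha} = 0 ,
\]
with first integral $\tfrac p2 (w')^2=\tfrac{\lambda}{\alpha+1}|w|^{\alpha+1}-\tfrac{p-1}{2\alpha+2}|w|^{2\alpha+2}+C$. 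In the wavelike regime $C>0$, so $w(s_0)=0$ forces $w'(s_0)\ne0$: the function $w$ has a transversal zero, $w(s)=c(s-s_0)+O(|s-s_0|^{\alpha+2})$ with $c\ne0$, whence
\[
  k_w(s)=\sgn(w(s))\,|w(s)|^{\alpha}=|c|^{\alpha}\sgn(c)\,\sgn(s-s_0)\,|s-s_0|^{\alpha}\,(1+o(1)).
\]
Thus the singular part of $k_w$ at $s_0$ is exactly that of the model $\phi(x)=\sgn(x)|x|^{\alpha}$.

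\emph{The three alternatives, and the obstacle.} The trichotomy now follows from $\phi$. If $\alpha$ is an odd integer then $\sgn(w)|w|^{1+2\alpha}=w^{1+2\alpha}$ and $\sgn(w)|w|^{\alpha}=w^{\alpha}$, so the $w$-equation is analytic, whence $w$, $k_w=w^{\alpha}$, $\theta_w$, and $\gamma_w$ are analytic (case~(i)). If $\alpha$ is an even integer then $\phi^{(\alpha)}=\alpha!\,\sgn(x)$ is bounded with a jump, so $k_w\in W^{m_p,\infty}$ ($m_p=\alpha$) but $k_w\notin W^{m_p+1,1}$ at an interior zero; patching over the finitely many zeros gives $\gamma_w\in W^{m_p+2,\infty}$, and $\gamma_w\notin W^{m_p+3,1}$ once $Z\cap(0,L)\ne\emptyset$ (case~(ii)). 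If $\alpha\notin\Z$ then $\phi^{(m_p)}(x)\sim|x|^{\alpha-m_p}$ with $m_p=\lceil\alpha\rceil$, which lies in $L^r_{\mathrm{loc}}$ iff $(\alpha-m_p)r>-1$, i.e.\ $r<r_p=(m_p-\alpha)^{-1}$; hence $\gamma_w\in W^{m_p+2,r}$ for $r\in[1,r_p)$ and $\gamma_w\notin W^{m_p+2,r_p}$ (case~(iii)). The interior-zero hypothesis is needed only in~(ii) because a one-sided zero contributes $x^{\alpha}$, which is smooth for even integer $\alpha$ but still fails $L^{r_p}$ in~(iii). \emph{The main obstacle} is the third step: rigorously establishing $C>0$ and $w'(s_0)\ne0$ in the wavelike regime and controlling the $O(|s-s_0|^{\alpha+2})$ corrections, so that the singular part of $k_w$ is \emph{exactly} $\sgn(x)|x|^{\alpha}$, together with the clean transmission of this expansion through the nonlinear map $(\cos\theta_w,\sin\theta_w)$ without regularity-improving cancellation on both components at once.
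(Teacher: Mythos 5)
Your overall skeleton matches the paper's: reduce to the wavelike case via the classification of Theorem \ref{thm:Formulae_1} (all other $p\leq2$ profiles have curvature bounded away from zero or identically zero, so Theorem \ref{thm:Soft_regularity} gives analyticity), handle case (i) by observing that $w=|k|^{p-2}k$ solves a polynomial ODE when $\tfrac1{p-1}$ is odd (this is literally Step 1 of the paper's proof of Proposition \ref{thm:regularity_cn}), and reduce (ii), (iii) to the local model $x\mapsto\sgn(x)|x|^{\alpha}$ with $\alpha=\tfrac1{p-1}$ at a transversal zero. Where you diverge is in how that local model is justified. The paper never expands $w$ near $s_0$; it works with the closed-form representation $\cn_p(x,q)=\Psi(\cos\amcn(x,q))$, $\Psi(y)=|y|^{\frac2p-1}y$, and proves by induction the exact derivative formula $\frac{d^m}{dx^m}\cn_p=c_m|\cos a(x)|^{\frac2p(m+1)-2m}g_m(\sin a(x))$ (up to a sign factor) with $c_m\neq0$ and $g_m(\pm1)\neq0$; membership or non-membership in $W^{m_p,r}$ then follows from a one-line change of variables $\xi=\amcn(x,q)$. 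This has the advantage that only one non-smooth composition occurs, and its inner function is analytic with a simple zero, so no remainder estimates are needed.

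The genuine gap in your proposal is precisely the step you flag as the ``main obstacle,'' and it is the technical heart of the optimality claims. The easy half of it is actually already in the paper: $D_0>0$ characterizes the wavelike case (Theorem \ref{thm:k-classify}(ii)), and the conservation law \eqref{eq:0610-4} with $F(0)=0$ immediately gives $w'(s_0)\neq0$ at any zero. What is \emph{not} supplied is the double Fa\`a di Bruno bookkeeping: (a) to differentiate $k=\sgn(w)|w|^{\alpha}$ exactly $m_p$ times you need $w\in C^{m_p+1}$ near $s_0$ (which requires a bootstrap through the semilinear ODE, since a priori $w$ is only $C^2$) together with a proof that every multi-block term is strictly less singular than $\Psi_\alpha^{(m_p)}(w)(w')^{m_p}$ and that the latter's coefficient does not vanish; and (b) the analogous statement for $\partial_s^{\,j}(\cos\theta_w,\sin\theta_w)$, where you assert but do not verify that the multi-block exponents strictly exceed the single-block one (this uses $\alpha+1>0$ and finitely many partitions, so it is true, but it must be written out). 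Without (a) in particular, the claims ``$k_w\notin W^{m_p+1,1}$ near an interior zero'' in (ii) and ``$k_w\notin W^{m_p,r_p}$'' in (iii) are not established: an unquantified $O(|s-s_0|^{\alpha+2})$ correction in $w$ does not by itself control the $m_p$-th derivative of $\sgn(w)|w|^\alpha$. The paper's Appendix \ref{sect:proof-regularity-cn-sech} is exactly the replacement for this missing computation, and the remaining endpoint/interior distinctions you draw (one-sided zeros are harmless in (ii) but still fatal in (iii)) agree with the paper's Section \ref{regularity:p-ela}.
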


For $p>2$, the flat-core $p$-elasticae may also lose their regularity, but in general they are more regular than wavelike ones:

\begin{theorem}[Optimal regularity: $p>2$]\label{thm:Optimal_regularity_2}
  Let $p\in(2,\infty)$, $L>0$, and $\gamma:[0,L]\to\R^2$ be an arclength parametrized $p$-elastica with $\partial Z\neq\emptyset$.
  If $\gamma$ is either linear, orbitlike, or circular, then $\gamma$ is analytic on $[0,L]$.
  If $\gamma$ is wavelike, then $\gamma\in W^{3,r}(0,L;\R^2)$ for any $r\in[1,\frac{p-1}{p-2})$, and $\gamma\not\in W^{3,\frac{p-1}{p-2}}(0,L;\R^2)$.
  In addition, if $\gamma$ is flat-core, then the following statements hold:
  Let $M_p:=\lceil \tfrac{2}{p-2} \rceil\geq1$ and $R_p:= (M_p- \tfrac{2}{p-2} )^{-1}>1$.
  \begin{itemize}
    \item[(i)] If $\frac{2}{p-2}$ is not an integer, then $\gamma\in W^{M_p+2,r}(0,L;\R^2)$ for any $r\in[1,R_p)$, and $\gamma\not\in W^{M_p+2,R_p}(0,L;\R^2).$
    \item[(ii)] If $\frac{2}{p-2}$ is an integer, then
    $\gamma\in W^{M_p+2,\infty}(0,L;\R^2).$
    If in addition $Z$ is not discrete (or equivalently contains an interval), then
    $\gamma\not\in W^{M_p+3,1}(0,L;\R^2).$
  \end{itemize}
\end{theorem}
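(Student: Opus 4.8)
The plan is to reduce the whole statement to the regularity of the signed curvature $k$, and then to read off the Sobolev exponents from the precise vanishing rate of $k$ at the points of $\partial Z$. First I would record the elementary reduction. Since $\gamma$ is arclength parametrized we have $\partial_s^2\gamma=kN$ and $\partial_s N=-kT$, so by the Leibniz rule $\partial_s^{m+2}\gamma=(\partial_s^m k)N+(\text{terms in }\partial_s^j k,\ j\le m-1)$, where the lower-order terms are bounded as soon as $k\in C^{m-1}$. Hence, on any subinterval where $k\in C^{m-1}$, one has $\gamma\in W^{m+2,r}$ if and only if $k\in W^{m,r}$. By Theorem \ref{thm:Soft_regularity} the curve is analytic off $\partial Z$, so it suffices to work locally near each $s_0\in\partial Z$. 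For the linear, orbitlike, and circular cases the formulae of Theorem \ref{thm:Formulae_1} give $k\equiv0$, $k=2\dn_p(\cdot,q)>0$, or $k\equiv1$, whence $\partial Z=\emptyset$ and analyticity on $[0,L]$ is immediate from the soft regularity.

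The crux is the local behaviour of $k$ at a zero $s_0\in\partial Z$, which I would extract from a first integral of \eqref{Teq:sw20-1.2}. Setting $f:=|k|^{p-2}k$, equation \eqref{Teq:sw20-1.2} becomes $p\,\partial_s^2 f+(p-1)|k|^p k-\lambda k=0$; multiplying by $\partial_s f$ and integrating yields a conserved quantity of the form $\tfrac{p}{2}(\partial_s f)^2+\tfrac{(p-1)^2}{2p}|k|^{2p}-\tfrac{\lambda(p-1)}{p}|k|^p\equiv C$, where $(\partial_s f)^2=(p-1)^2|k|^{2p-4}(\partial_s k)^2$. Two regimes arise at $s_0$. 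In the \emph{wavelike} case $C\neq0$, so $(\partial_s f)^2\to 2C/p\neq0$ and $f$ is $C^1$ with $f(s_0)=0$, $\partial_s f(s_0)\neq0$; inverting $k=|f|^{1/(p-1)}\sgn f$ gives $k(s)\sim c\,\sgn(s-s_0)|s-s_0|^{1/(p-1)}$ with $c\neq0$, matching $k_w=2q\cn_p(\cdot,q)$. In the \emph{flat-core} case $C=0$ (and $\lambda>0$), the balance $(\partial_s k)^2|k|^{2p-4}\sim\tfrac{2\lambda}{p^2(p-1)}|k|^p$ holds; writing $u:=|k|^{(p-2)/2}$ turns this into an autonomous equation $(\partial_s u)^2=A-Bu^{2p/(p-2)}$ with $A>0$, so $u$ is $C^1$ with $\partial_s u(s_0)\neq0$ and $k=u^{2/(p-2)}\sim c|s-s_0|^{2/(p-2)}$ on the loop side, while $k\equiv0$ on the adjacent segment; this matches $k_b=2\sech_p$ near $s=\pm\K_p(1)$. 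Equivalently, both expansions may be quoted directly from the definitions of $\cn_p$ and $\sech_p$ in Definitions \ref{def:cndn} and \ref{def:sech}.

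With a leading asymptotic $k(s)\sim c|s-s_0|^\alpha$, $c\neq0$, the bookkeeping is routine: $\partial_s^m k\sim c'|s-s_0|^{\alpha-m}$ lies in $L^r$ near $s_0$ precisely when $(\alpha-m)r>-1$, i.e.\ $r<(m-\alpha)^{-1}$, and fails at the endpoint exponent, where the power of $|s-s_0|$ equals $-1$ and the integral diverges logarithmically. For the wavelike case $\alpha=\tfrac1{p-1}\in(0,1)$, so $k\in C^0$ and $k\in W^{1,r}$ exactly for $r<\tfrac{p-1}{p-2}$ with $\partial_s k\notin L^{(p-1)/(p-2)}$; by the reduction $\gamma\in W^{3,r}$ for such $r$ and $\gamma\notin W^{3,(p-1)/(p-2)}$. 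For the flat-core case with $\alpha=\tfrac2{p-2}\notin\Z$ we have $\alpha\in(M_p-1,M_p)$, so $k\in C^{M_p-1}$ and $\partial_s^{M_p}k\sim|s-s_0|^{\alpha-M_p}$ belongs to $L^r$ exactly for $r<(M_p-\alpha)^{-1}=R_p$, which is case (i).

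The last case is flat-core with $\alpha=\tfrac2{p-2}=M_p\in\Z$. Then $k\sim c|s-s_0|^{M_p}$ on the loop side and $k\equiv0$ on the segment side, so $\partial_s^{M_p}k$ is bounded (whence $k\in W^{M_p,\infty}$ and $\gamma\in W^{M_p+2,\infty}$) but jumps across $s_0$ whenever a segment of positive length abuts the loop; such a jump puts a Dirac mass in the distributional $\partial_s^{M_p+1}k$, so $k\notin W^{M_p+1,1}$ and $\gamma\notin W^{M_p+3,1}$. This is exactly where the hypothesis that $Z$ is not discrete enters: a positive-length segment produces the one-sided junction, whereas at a loop--loop junction the two expansions may match and raise the regularity. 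I expect the main obstacle to be the second paragraph, namely upgrading the leading-order balance to an expansion sharp enough to control $\partial_s^{M_p}k$ (not merely $\partial_s k$) and to guarantee a nonzero leading coefficient at the endpoint exponent; since the natural linearizing variables $f$ and $u$ are only $C^1$, the cleanest route is to transfer this step to the explicit $p$-elliptic functions, whose expansions at their zeros are established beforehand.
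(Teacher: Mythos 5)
Your proposal is correct and follows essentially the same route as the paper: reduce to the signed curvature via the classification theorems, and then read off the Sobolev exponents from the behaviour of $\cn_p$ near its zeros (wavelike, exponent $\tfrac{1}{p-1}$) and of $\sech_p$ near $\pm\K_p(1)$ (flat-core, exponent $\tfrac{2}{p-2}$), which is exactly what Propositions \ref{thm:regularity_cn} and \ref{thm:regularity_sech} supply via the explicit derivative formulae with non-vanishing leading coefficients. Your conservation-law derivation of the leading exponents is a valid supplementary heuristic, and you correctly identify that the genuine work — controlling $\partial_s^{M_p}k$ rather than just the first derivative, and the one-sided jump argument in case (ii) — must be delegated to the explicit expansions of the $p$-elliptic functions, as the paper does.
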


\begin{remark}
  The above regularity of wavelike $p$-elasticae with $p>2$ may be regarded as a natural extension of the previous case $p\leq2$, since if $p>2$, then we always have $m_p=\lceil \frac{1}{p-1} \rceil=1$ and $r_p=(m_p- \frac{1}{p-1})^{-1}=\frac{p-1}{p-2}$.
  Also the $C^\infty$ regularity of borderline $p$-elasticae with $p\leq2$ (which are in fact analytic) may be related with the flat-core case $p>2$ in view of $\lim_{p\to 2+}M_p=\infty$.
\end{remark}

\begin{remark}
  The optimal regularity described above can roughly be summarized as follows:
  In the wavelike case the curvature $k$ has the same regularity as $x\mapsto \textrm{sign}(x)|x|^{\frac{1}{p-1}}$ around $x=0$, while in the flat-core case it behaves like $x\mapsto (x^+)^{\frac{2}{p-2}}$.
\end{remark}

In particular, the above regularity results directly determine the exponents $p\in(1,\infty)$ such that every $p$-elastica is analytic, since otherwise there is a $p$-elastica not of class $C^{2+\lceil \frac{1}{p-1}\rceil}$.

\begin{corollary}\label{cor:regularity-special-case}
    Let $p\in(1,\infty)$.
    Then every $p$-elastica is analytic if and only if $p=\frac{2m}{2m-1}$ for some integer $m\geq1$, that is, $p\in\{\frac{2}{1},\frac{4}{3},\frac{6}{5},\frac{8}{7},\dots\}$.
\end{corollary}

Now we discuss several outcomes of our classification and regularity results.

Thanks to the classification in Theorems \ref{thm:Formulae_1} and \ref{thm:Formulae_2}, we are now able to solve boundary value problems purely in terms of $p$-elliptic integrals and functions.
As a particular example we classify all the possible closed planar $p$-elasticae into circles and figure-eight $p$-elasticae, cf.\ Definition \ref{def:N-fold} and Figure \ref{fig:p-figure8}:

\begin{center}
    \begin{figure}[htbp]
      \includegraphics[scale=0.2]{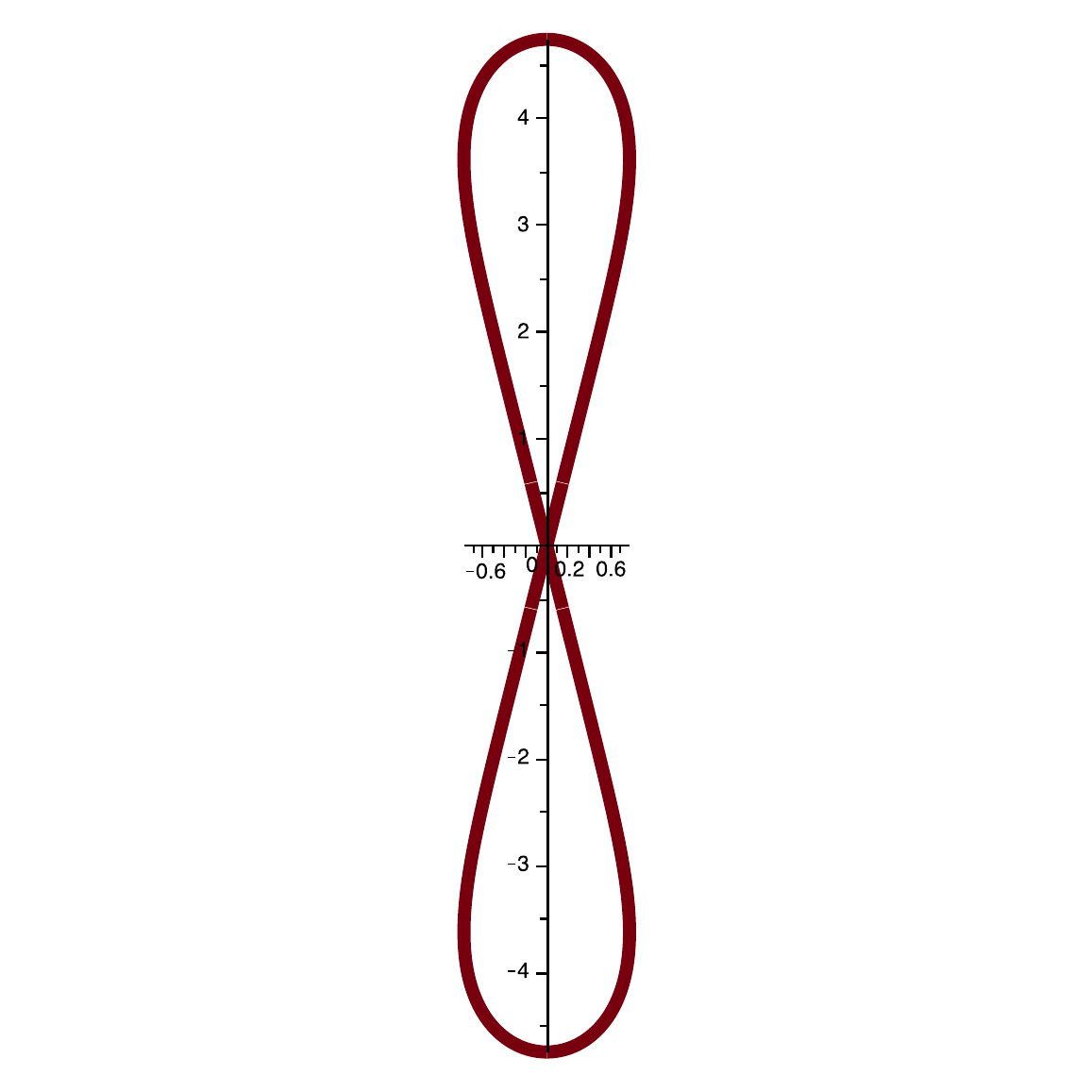}
      \includegraphics[scale=0.2]{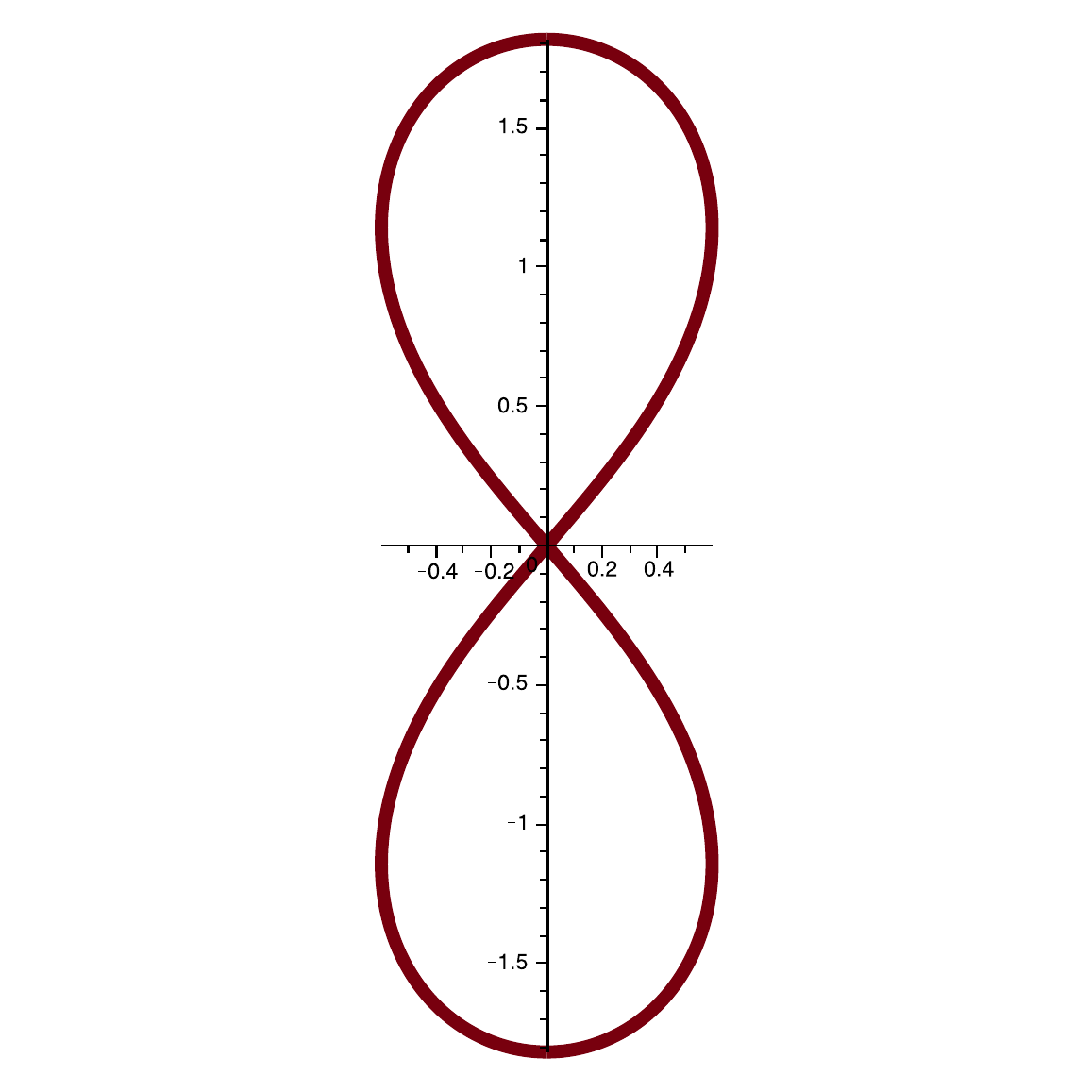}
      \includegraphics[scale=0.2]{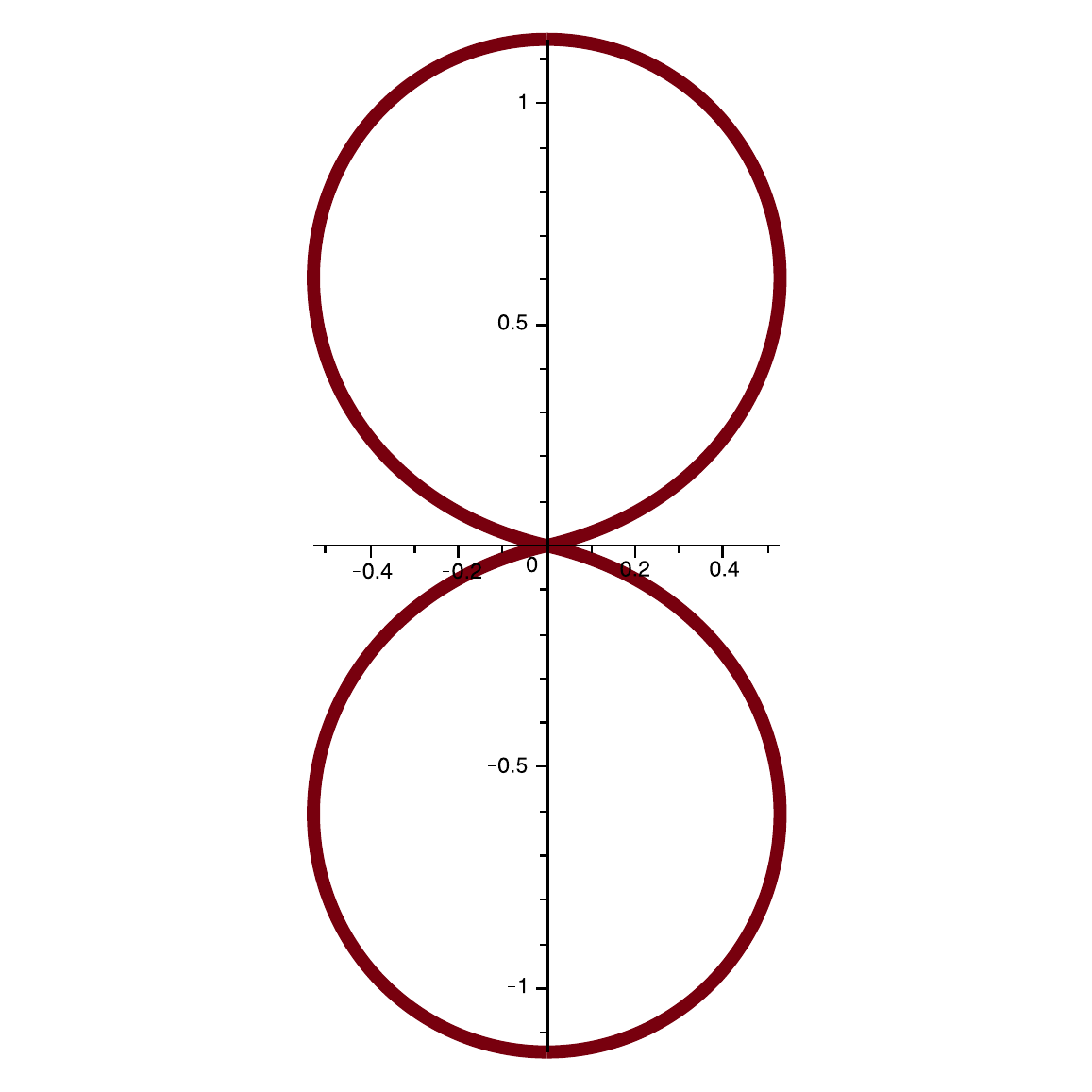}
  \caption{Figure-eight $p$-elasticae with $p=\frac{6}{5}$, $p=2$, $p=10$ (from left to right).}
  \label{fig:p-figure8}
  \end{figure}
\end{center}

\begin{theorem}[Closed planar $p$-elastica: Theorem \ref{thm:classify-closed}]
Let $p\in(1,\infty)$ and $\gamma$ be a closed planar $p$-elastica.
Then $\gamma$ is either a circle or a figure-eight $p$-elastica, possibly multiply covered.
\end{theorem}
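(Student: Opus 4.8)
The plan is to classify closed planar $p$-elasticae by invoking the complete classification in Theorems \ref{thm:Formulae_1} and \ref{thm:Formulae_2} and then imposing the closedness condition case by case. A closed curve of finite length $L>0$ must be periodic, so its tangential angle $\theta_*$ and curvature $k_*$ must be periodic (up to adding integer multiples of $2\pi$ to $\theta_*$ to account for the turning number), and the curve must close up: $\gamma_*(s_0+L)=\gamma_*(s_0)$ for the period $L$. First I would eliminate the cases that cannot produce a closed curve. The linear case (Case I) is immediately ruled out since $\gamma_\ell(s)=(s,0)$ is unbounded. The circular case (Case V) obviously yields a circle (possibly multiply covered), which is one of the two allowed outcomes. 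The borderline and flat-core cases involve $\sech_p$, whose curvature $k=2\sech_p s$ decays to $0$ as $s\to\pm\infty$ and whose tangential angle has a finite limit; such a curve is a single transition (a loop or asymptotic arc) that cannot close up into a compact curve, so these are excluded.

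The substantive work lies in the wavelike and orbitlike cases, where the curvature is genuinely periodic in $s$ and closure becomes a quantitative condition on the modulus $q$. The strategy here is standard for elastica closure: over one period of the curvature, the tangential angle $\theta_*$ increases by a fixed \emph{angle increment} $\Delta\theta(q)$, and the curve closes up precisely when this increment is a rational multiple of $2\pi$, namely $\Delta\theta(q)=\tfrac{2\pi m}{n}$ for coprime integers, together with the requirement that the net displacement vanish over $n$ periods. For the orbitlike case, the angle $\theta_o(s)=2\am_{2,p}(s,q)$ is strictly monotone (since $k_o=2\dn_p>0$), so $\theta_o$ increases by a full positive increment $2\am_{2,p}(2\Kdn(q),q)$ over each period; I would compute this increment via the $p$-elliptic integrals and show that closure forces it to equal $\tfrac{2\pi}{n}$ for some integer, which in turn pins down a discrete set of admissible $q$ — and one must then check whether the resulting closed curves are embedded or reduce to multiply-covered circles; I expect the orbitlike closed curves to all degenerate or be excluded, leaving only the special symmetric profile. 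For the wavelike case, $\theta_w(s)=2\arcsin(q\sn_p(s,q))$ oscillates symmetrically about $0$ between $\pm\arcsin q$, so the net angle increment over a full period is \emph{zero}; the curve therefore has turning number zero, and closure is governed instead by the vanishing of the horizontal displacement over one period. I would use the explicit formula \eqref{eq:EP2} to compute the period vector and show that the horizontal period $2(2\Ecn(q)\cdot{\rm something}) $ vanishes exactly at the single distinguished modulus $q=q^*(p)$ identified in Definition \ref{def:N-fold}, which produces precisely the figure-eight $p$-elastica.

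Concretely, the key steps in order are: (1) reduce to periodicity of $(\theta_*,k_*)$ and the closure condition $\int_0^L(\cos\theta_*,\sin\theta_*)\,ds=0$ over an integer number of periods; (2) dispatch Cases I, III, III', V by the boundedness/decay observations above; (3) in the orbitlike case, evaluate the per-period angle increment and displacement using the $p$-elliptic integrals $\Edn,\Fdn,\Kdn$, and show the only closed solutions among these are multiply-covered circles (or are excluded entirely); (4) in the wavelike case, reduce closure to a single scalar equation in $q$ coming from the horizontal period, and identify its unique root as $q^*(p)$, yielding the figure-eight. The main obstacle I anticipate is step (4): establishing that the horizontal-period function $q\mapsto \Ecn(q)$-type expression has \emph{exactly one} zero $q^*(p)$ in $(0,1)$ for every $p\in(1,\infty)$, which requires a monotonicity or sign-change argument for a $p$-elliptic integral that has no classical closed form; the analogous $p=2$ statement is classical, but extending it uniformly in $p$ is the delicate analytic point. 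A secondary subtlety is handling multiple coverings cleanly — ensuring that solutions where $\Delta\theta$ is a nontrivial rational multiple of $2\pi$ either close into genuine new shapes or are recognized as iterated copies of the circle or figure-eight, so that the final list contains no spurious additional closed types.
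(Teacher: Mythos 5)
Your overall strategy (reduce to the classification of Theorems \ref{thm:Formulae_1} and \ref{thm:Formulae_2}, then impose closure case by case via per-period angle and displacement computations) is the same as the paper's, and your treatment of Cases I, V, and the wavelike case is essentially on track: for wavelike curves the tangential angle stays in $(-\pi,\pi)$, so closure reduces to the vanishing of the horizontal period, i.e.\ $2\E_{1,p}(q)/\K_{1,p}(q)-1=0$, whose unique root $q^*(p)$ is guaranteed by the monotonicity of $Q_p$ (Lemma \ref{lem:nabe-lem2}, due to Watanabe). You correctly flag that monotonicity as the delicate analytic input. However, there are two genuine gaps in the remaining cases.

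First, your dismissal of the flat-core case is wrong as stated. A flat-core $p$-elastica is \emph{not} ``a single transition'': by \eqref{eq:EP3-2} it is a finite concatenation of $N$ loops $\gamma_b^{\pm}$ joined by straight segments, its curvature has compact support rather than decaying asymptotically, and its tangent direction returns to $(-1,0)$ at every junction --- so there are many pairs $s_1\neq s_2$ with $\gamma_f'(s_1)=\gamma_f'(s_2)$, and the tangent condition alone excludes nothing. What actually rules out closure is a quantitative drift argument: each loop displaces the endpoint strictly leftward by $\tfrac{2}{p-1}\K_p(1)$ (Lemma \ref{lem:border-endpoints}), the flat parts all lie on the $x$-axis and are traversed injectively (Corollary \ref{cor:parallel-xaxis}), and hence no two distinct parameters can share both position and tangent. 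Without something like this, the flat-core case is open.

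Second, your plan for the orbitlike case rests on a misconception. Since $\theta_o(s)=2\am_{2,p}(s,q)$ and $\am_{2,p}$ increases by exactly $\pi$ over each period $2\K_{2,p}(q)$ of the curvature, the per-period angle increment is always $2\pi$, independent of $q$; the closure-of-tangent condition therefore does \emph{not} ``pin down a discrete set of admissible $q$'' --- it only forces the parameter gap to be an integer multiple of $\pi$ in the amplitude variable. The actual obstruction is that the horizontal displacement over $m$ half-periods equals $2mX_{2,p}(q)$ with $X_{2,p}(q)=\tfrac{2}{q^2}\E_{2,\frac{p}{p-1}}(q)+(1-\tfrac{2}{q^2})\K_{2,p}(q)$, and this is strictly negative for every $q\in(0,1)$ (Lemma \ref{lem:nabe-lem4}); hence \emph{no} orbitlike $p$-elastica closes, for any modulus. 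Your ``I expect the orbitlike closed curves to all degenerate or be excluded'' is the right conclusion but is not an argument; the sign of $X_{2,p}$ is the missing ingredient, and it is a second nontrivial $p$-elliptic-integral monotonicity fact on par with the one you identified for the wavelike case.
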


This result directly extends the classical case $p=2$ (see e.g.\ \cite[Theorem 0.1]{LS_85}).
In particular all flat-core solutions are ruled out.
The figure-eight $p$-elastica tends to a double-line as $p\to1$ and to a double-circle as $p\to\infty$ (see also \cite[Theorem 1.5]{MYarXiv2209}, and \cite{Mos22} for $p=\infty$.)
In our subsequent paper \cite{MYarXiv2209} our formulae are also applied to classify all $p$-elasticae under the so-called pinned boundary condition.

The regularity results are also useful in various contexts.
In our third paper \cite{MYarXiv2301} the general $C^2$-regularity plays a crucial role to study stability of $p$-elasticae.
The optimal regularity results would be informative for studying gradient flows, of second order \cite{NP20, OPW20} and of fourth order \cite{BHV, BVH, OW23} (called $p$-elastic flow).
In the classical case $p=2$ many results about smooth convergence to elasticae as time goes to infinity are already known, in particular for elastic flows of fourth order; see the survey \cite{MPP21} and references therein (and also \cite{Wen, OW23} for second-order flows).
In contrast, for $p\neq2$, long-time behavior is less understood, and the only known results seem to be recent sub-convergence results \cite{NP20,BHV}; the former \cite{NP20} is about (at least) $W^{2,p}$-weak convergence of a second-order gradient flow for $\Theta$-networks; the latter \cite{BHV} is about $W^{2,p}$-strong convergence of a fourth-order $p$-elastic flow for closed curves with $p\geq 2$.
Our results already ensure that the topology of general convergence cannot be better than the ``$W^{m_p,r_p}$-type regularity'' in the wavelike case of Theorems \ref{thm:Optimal_regularity_1} and \ref{thm:Optimal_regularity_2}.
We conjecture that this $W^{m_p,r_p}$-type regularity is indeed the optimal convergence topology for $p$-elastic flows.

Finally we discuss the structure of this paper while mentioning the main stream and key ideas of our proof.

In Section \ref{chap:ELeq} we transform the condition for a curve to be a critical point into a weak form of the Euler--Lagrange equation in terms of curvature, while establishing the $L^\infty$-regularity of the curvature so that the weak form is well defined.
In Section \ref{sect:p-Jacobi} we introduce $p$-elliptic integrals and functions, and then verify that they indeed solve the above Euler--Lagrange equation for curvature (up to the choice of suitable coefficients).
In \cite{nabe14} Watanabe used two types of $p$-elliptic integrals $\E_{1,p}$ and $\E_{2,p}$.
Based on this idea, we introduce two types of amplitude functions $\amcn$ and $\amdn$, and independently use them to introduce the new $p$-elliptic functions $\cn_p$ and $\dn_p$.
In Section \ref{sect:mainthm} we give a full classification of all solutions in terms of an initial value problem for curvature (Theorem \ref{thm:k-classify}) by ensuring that no other solutions exist than the solutions obtained above by $p$-elliptic functions.
Our proof here is inspired by 
the transformation used by Shioji--Watanabe \cite{SW20}.
More precisely, if we let $w:=|k|^{p-2}k$, then equation \eqref{Teq:sw20-1.2} is (again formally) transformed into the semilinear equation
\[
p w_{ss} + (p-1)|w|^{\frac{2}{p-1}}w -\lambda |w|^{\frac{2-p}{p-1}}w = 0,
\]
and hence we may expect more regularity for $w$.
In fact, we prove that although $k$ may not be of class $C^2$, the following property holds true in general (Lemma \ref{lem:0812}):
\[
\text{If $k$ is the curvature of a $p$-elastica, then $w:=|k|^{p-2}k \in C^2$.}
\]
The $C^2$-regularity of $w$ was merely assumed in \cite{SW20}, but in the present paper it is proven as a new regularity result.
This regularity property allows us to reduce the problem into classification of classical solutions to the above semilinear equation, which will be accomplished by careful but rather explicit computations thanks to our formulae.
Finally, Section \ref{sect:closed} is devoted to the classification of closed planar $p$-elasticae.
The arguments heavily rely on our explicit formulae again.

\subsection*{Acknowledgments}
The first author is supported by JSPS KAKENHI Grant Numbers 18H03670, 20K14341, and 21H00990, and by Grant for Basic Science Research Projects from The Sumitomo Foundation.
The second author is supported by JSPS KAKENHI Grant Numbers 19J20749 and 22K20339.

\section{The Euler--Lagrange equations}\label{chap:ELeq}

In this section we first derive a weak form of the Euler--Lagrange equation for $p$-elasticae in terms of curves in the natural energy space.
Then we obtain an equivalent weak form of the equation in terms of the signed curvature by slightly improving the regularity, thus reducing the above fourth-order ODE system to a single second-order ODE.

To begin with, we prepare some notations. 
For $p\in (1, \infty)$ we define the set of $W^{2,p}$-immersed curves as follows: 
\[
W^{2, p}_{\rm imm}(0, 1; \R^2):= \set{ \gamma \in W^{2, p}(0,1; \R^2) | \gamma'(t) \neq 0  \text{ for all } t \in[0,1] }.
\]
For $\gamma \in W^{2, p}_{\rm imm}(0, 1; \R^2)$ we often let $\tilde{\gamma}$ denote the arclength parametrization of $\gamma$.
Hereafter, we use both the original parameter $t \in[0,1]$ and the arclength parameter $s\in[0, \mathcal{L}[\gamma]]$.
In this paper we are concerned with curves under the fixed length condition, and throughout this section $L>0$ denotes the length of curves.

Now we discuss the Euler--Lagrange equation.
By the known computation of the first variation of $\mathcal{B}_p$, cf.\ Appendix~\ref{sect:First-variation-Bp}, 
we deduce that $\gamma \in W^{2, p}_{\rm imm}(0, 1; \R^2)$ is a $p$-elastica with $\lambda \in \R$ in the sense of Definition \ref{def:p-elastica} if and only if the arclength parametrization $\tilde{\gamma}\in W^{2, p}(0, L; \R^2)$ of $\vc$ satisfies 
\begin{align}\label{eq:0525-1}
\int_0^L\Big( (1-2p) |\tilde{\vc}''|^p (\tilde{\vc}', \eta') +p|\tilde{\vc}''|^{p-2}(\tilde{\vc}'', \eta'') +\lm (\tilde{\vc}', \eta')\Big) ds = 0
\end{align}
for all $\eta \in C^{\infty}_{\rm c}(0, L;\R^2)$.
Here $(\cdot,\cdot)$ denotes the Euclidean inner product.


Next, we translate the above Euler--Lagrange equation in terms of the signed curvature $k$:

\begin{proposition} \label{prop:p-ELk} 
If an arclength parametrized curve $\tilde{\vc}\in W^{2,p}(0,L; \R^2)$ satisfies \eqref{eq:0525-1} for some $\lm \in \R$, then the signed curvature $k$ of $\tilde{\vc}$ belongs to $L^{\infty}(0,L)$.
Moreover, the function $k$ satisfies 
\begin{align}\label{eq:EL}\tag{EL}
\int_0^L \Big( p |k|^{p-2}k \vp'' +(p-1) |k|^pk\vp -\lm k \vp \Big) ds = 0
\end{align}
for all $\vp \in C^{\infty}_{\rm c}(0,L)$.
\end{proposition}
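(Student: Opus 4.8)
The plan is to reduce the vectorial identity \eqref{eq:0525-1} to a scalar distributional equation by testing against fields of the form $\eta=\Phi\mathbf{e}$ (with $\Phi\in C^\infty_{\rm c}(0,L)$ and $\mathbf{e}\in\R^2$ fixed), to integrate this once so as to produce a \emph{conserved vector}, and then to extract both the $L^\infty$-bound and equation \eqref{eq:EL} by projecting onto the moving frame. Writing $\mathbf{T}:=\tilde{\vc}'$ for the unit tangent and $\mathbf{N}$ for its counterclockwise rotation by $\pi/2$, one has $\mathbf{T},\mathbf{N}\in W^{1,p}(0,L;\R^2)$ with $|\mathbf{T}|=|\mathbf{N}|=1$ a.e., $\tilde{\vc}''=k\mathbf{N}$, and the Frenet relations $\mathbf{T}'=k\mathbf{N}$, $\mathbf{N}'=-k\mathbf{T}$. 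Abbreviating $a:=(1-2p)|k|^p+\lm\in L^1(0,L)$ and $w:=|k|^{p-2}k\in L^{p'}(0,L)$ (with $p'=p/(p-1)$), equation \eqref{eq:0525-1} reads $\int_0^L\big(a(\mathbf{T},\eta')+pw(\mathbf{N},\eta'')\big)\,ds=0$.

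\textbf{Step 1: conserved vector and $L^\infty$-bound.} Choosing $\eta=\Phi\mathbf{e}$ shows that $\big(pw(\mathbf{N},\mathbf{e})\big)''=\big(a(\mathbf{T},\mathbf{e})\big)'$ in $\mathcal{D}'(0,L)$ for every $\mathbf{e}$. Since a distribution with vanishing derivative is constant, I would integrate once to obtain $(pw\mathbf{N})'=a\mathbf{T}+\mathbf{C}$ in $\mathcal{D}'(0,L)$ for some $\mathbf{C}\in\R^2$. As the right-hand side lies in $L^1$, this forces $w\mathbf{N}\in W^{1,1}(0,L;\R^2)\hookrightarrow C([0,L];\R^2)$. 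The \emph{key observation} is that $|pw\mathbf{N}|=p|k|^{p-1}$ (because $|\mathbf{N}|=1$), so $|k|^{p-1}$ is continuous, hence bounded on $[0,L]$, which yields $k\in L^\infty(0,L)$.

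\textbf{Step 2: the curvature equation.} Once $k\in L^\infty$, the conservation law upgrades: $a\mathbf{T}+\mathbf{C}\in L^\infty$, so $w\mathbf{N}$ is Lipschitz, and since $\mathbf{N}$ is now Lipschitz as well ($\mathbf{N}'=-k\mathbf{T}\in L^\infty$), the scalar $w=(w\mathbf{N},\mathbf{N})$ is Lipschitz. Applying the product rule, $(w\mathbf{N})'=w'\mathbf{N}-wk\mathbf{T}$, and projecting $p(w\mathbf{N})'=a\mathbf{T}+\mathbf{C}$ onto $\mathbf{N}$ and onto $\mathbf{T}$ gives $pw'=(\mathbf{C},\mathbf{N})$ and $-pwk=a+(\mathbf{C},\mathbf{T})$. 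Differentiating the first relation and inserting the second yields, a.e., $pw''=(\mathbf{C},\mathbf{N}')=-k(\mathbf{C},\mathbf{T})=k(pwk+a)$; using $wk^2=|k|^pk$ and the definition of $a$, this simplifies to $pw''=(1-p)|k|^pk+\lm k$ with $w''\in L^\infty$. Finally, since $pw\in W^{2,\infty}$, integrating by parts twice gives $\int_0^L pw\vp''\,ds=\int_0^L pw''\vp\,ds$ for $\vp\in C^\infty_{\rm c}(0,L)$, and substituting the expression for $pw''$ produces exactly \eqref{eq:EL}.

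\textbf{Main obstacle.} The crux is Step 1, namely promoting $k$ from $L^p$ to $L^\infty$. Although $w=|k|^{p-2}k$ is a priori only $L^{p'}$, the specific vectorial structure of \eqref{eq:0525-1} forces the \emph{vector} $w\mathbf{N}$—rather than the scalar $w$—to be $W^{1,1}$ and hence continuous, and its modulus pins down $|k|^{p-1}$. Care is needed to justify the single integration at this low regularity (working componentwise against $\Phi\in C^\infty_{\rm c}$) and, in Step 2, to verify that $w$ itself (not merely $w\mathbf{N}$) becomes Lipschitz before one is entitled to differentiate a second time.
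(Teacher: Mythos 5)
Your argument is correct, but it takes a genuinely different route from the paper's. For the $L^\infty$ bound, the paper tests \eqref{eq:0525-1} against explicitly constructed scalar test functions $\xi(s)=\int_0^s\!\int_0^t\phi+\alpha s^2+\beta s^3$ (chosen to lie in $W^{2,p}_0$ with $\|\xi\|_{C^1}\leq C\|\phi\|_{L^1}$, following an idea of Dall'Acqua--Deckelnick--Grunau) and concludes $|\tilde{\gamma}''|^{p-2}\tilde{\gamma}_i''\in L^\infty$ by $L^1$--$L^\infty$ duality; you instead apply the du Bois--Reymond lemma componentwise to obtain the first integral $p(w\mathbf{N})'=a\mathbf{T}+\mathbf{C}$, whence $w\mathbf{N}\in W^{1,1}\hookrightarrow C^0$ and $|k|^{p-1}=|w\mathbf{N}|$ is continuous. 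This gives slightly more than the paper (continuity rather than mere boundedness of $|k|^{p-1}$) at comparable cost. For \eqref{eq:EL}, the paper builds admissible normal test fields $\eta_j=\varphi\,\mathbf{n}_j$ with mollified curvatures $k_j\to k$ in $L^p$ and passes to the limit, staying entirely within the weak formulation; you instead project the conservation law onto the frame to get $pw'=(\mathbf{C},\mathbf{N})$ and $-pwk=a+(\mathbf{C},\mathbf{T})$, deduce $w\in W^{2,\infty}$ and the strong equation $pw''+(p-1)|k|^pk-\lambda k=0$ a.e., and recover the weak form by integrating by parts. Your Step 2 thereby anticipates the regularity statement that the paper only establishes later (Lemma~\ref{lem:0812}, namely $w=|k|^{p-2}k\in C^2$), obtained there by a separate bootstrap from \eqref{eq:EL}; the trade-off is that you must justify the a.e.\ product and chain rules for the Lipschitz quantities $w\mathbf{N}$, $\mathbf{N}$, and $w=(w\mathbf{N},\mathbf{N})$, which you correctly flag and which does go through. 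Both proofs are sound; yours is arguably more economical, while the paper's keeps the derivation of \eqref{eq:EL} independent of any regularity theory for $w$.
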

\begin{proof}
We first prove that $k \in L^{\infty}(0,L).$
By density we may assume that $\tilde{\vc}\in W^{2,p}(0,L; \R^2)$ satisfies \eqref{eq:0525-1} for any $\eta \in W^{2,p}_0(0,L; \R^2)$.
Here we adopt ideas from 
\cite[Theorem 3.9]{DDG}.
Fix any $\phi \in C^{\infty}_{\rm c}(0,L)$, and set
\[
\xi(s):= \int_0^s \!\! \int_0^t \phi(r)drdt + \alpha s^2 + \beta s^3, \quad s\in[0,L], \]
with
\[ \alpha:= \frac{1}{L} \int_0^L \phi(r)\,dr - \frac{3}{L^2} \int_0^L\! \int_0^r \phi(t)\,dtdr, \quad 
\beta:= -\frac{\alpha}{L}-\frac{1}{L^3}\int_0^L\! \int_0^r \phi(t)\,dtdr.
\]
Then $\xi \in W^{2,p}_0(0,L)$ and $|\va|, |\vb|, \| \xi \|_{C^1} \leq C \|\phi\|_{L^1}$ 
with a constant $C$ depending only on $L$.
Inserting $\eta(s)=(\xi(s),0)$ into \eqref{eq:0525-1}, we see that for $\tilde{\vc}(s)=(\tilde{\vc}_1(s), \tilde{\vc}_2(s))$
\begin{align*}
\Big| \int_0^L p |\tilde{\vc}''|^{p-2} \tilde{\vc}_1'' \,\phi \,ds  \Big| \leq C' \| \phi\|_{L^1(0,L)},
\end{align*}
with a constant $C'$ depending only on $L$ and the $W^{2,p}$-norm of $\tilde{\gamma}$.
By the arbitrariness of $\phi$ we get $ |\tilde{\vc}''|^{p-2} \tilde{\vc}_1'' \in L^{\infty}(0,L)$.
By the same argument we also obtain $ |\tilde{\vc}''|^{p-2} \tilde{\vc}_2'' \in L^{\infty}(0,L)$ and hence $|k|=|\tilde{\gamma}''|\in L^\infty(0,L)$, so that $k\in L^\infty(0,L)$.

We now prove that $k$ satisfies \eqref{eq:EL}.
Fix any $\varphi \in C^{\infty}_{\rm c}(0,L)$.
Let $\mathbf{t}$ and $\mathbf{n}$ be the unit tangent vector and the unit normal vector of $\tilde{\gamma}$, respectively, defined by $\mathbf{t}(s):=\tilde{\gamma}'(s)$ and $\mathbf{n}(s):=Q_{\pi/2}\tilde{\vc}'(s)$, where $Q_{\theta}$ stands for the counterclockwise rotation matrix through angle $\theta\in \R$.
Recall that the Frenet--Serret formula yields $\mathbf{t}'(s)=k(s)\mathbf{n}(s)$ and $\mathbf{n}'(s)=-k(s)\mathbf{t}(s)$, and in particular
\[\mathbf{n}(s)= \mathbf{n}(0)-\int_0^s k(\sigma) \mathbf{t}(\sigma)\,d\sigma, \quad s\in[0,L].\]
Take any sequence $\{ k_j \}_{j\in \N} \subset C^1(0,L)$ such that
$k_j\to k$ in $L^p(0,L)$ as $j\to \infty$.
For each $j\in \N$, set
\begin{align*}
\eta_j (s) &:= \varphi(s)\mathbf{n}_j (s), \quad \text{where} \quad
\mathbf{n}_j (s) := \mathbf{n}(0)-\int_0^s k_j(\sigma) \mathbf{t}(\sigma)\,d\sigma.
\end{align*} 
Since $k_j\to k $ in $L^p(0,L)$, it follows that $\mathbf{n}_j \to \mathbf{n}$ in $L^{\infty}(0,L)$.
We also notice that $\eta_j(0)=\eta_j(L)=(0,0)$ and, using again the Frenet--Serret formula,
\begin{align*}
\eta_j'(s)&=\vp'(s)\mathbf{n}_j(s)-\vp(s)k_j(s)\mathbf{t}(s), \\
\eta_j''(s)&=\vp''(s)\mathbf{n}_j(s)-2\vp'(s)k_j(s)\mathbf{t}(s) -\vp(s)k_j'(s)\mathbf{t}(s) - \vp(s)k_j(s)k(s)\mathbf{n}(s).
\end{align*}
Therefore, $\eta_j\in W^{2,p}_0(0,L)$ for each $j\in\N$.
%
Substituting $\eta=\eta_j$ into \eqref{eq:0525-1}, 
we deduce from $|\tilde{\vc}'|\equiv|\mathbf{n}|\equiv1$ and  $\tilde{\gamma}''=k\mathbf{n}$ that
\begin{align*}
\int_0^L \bigg( (1-2p)|k|^p \Big( \vp'(\tilde{\gamma}', \mathbf{n}_j) - k_j\vp \Big) 
+p|k|^{p-2} \Big( \vp''(\tilde{\gamma}'', \mathbf{n}_j) - |k|^{2} k_j\varphi \Big)& \\
+\lambda \Big( \vp'(\tilde{\gamma}', \mathbf{n}_j)-k_j\varphi\Big)
\bigg) ds &= 0.
\end{align*}
By using $k_j\to k$ in $L^p(0,L)$, $\mathbf{n}_j\to \mathbf{n}$ in $L^{\infty}(0,L)$, and $k \in L^{\infty}(0,L)$, we can obtain \eqref{eq:EL} as the limit of the above equality.
\end{proof}


\section{$p$-Elliptic functions}\label{sect:p-Jacobi}

From now on we investigate various properties of $p$-elasticae through the analysis of \eqref{eq:EL}.
In the classical case of $p=2$ it is well known that the curvature of all critical points is completely classified in terms of the Jacobian elliptic functions $\cn$ and $\dn$.
In order to extend this fact to the general power, in this section, we introduce suitable generalizations of the classical elliptic functions, which naturally appear as solutions to \eqref{eq:EL}.
One of the important points is that the elliptic cosine $\cn$ and the delta amplitude $\dn$ are extended in ``different'' ways, by introducing two kinds of generalization of the elliptic integral.

\subsection{Definitions}

We first introduce some generalized incomplete elliptic integrals:
\begin{definition}[$p$-Elliptic integrals of the first kind] \label{def:K_p}
Let $p\in(1,\infty)$.
We define the incomplete $p$-elliptic integrals of the first kind $\mathrm{F}_{1,p}(x,q)$ and $\mathrm{F}_{2,p}(x,q)$ of modulus $q\in[0,1)$, where $x\in\R$, by 
\begin{align*}
&\mathrm{F}_{1,p}(x, q):=\int_0^{x} \frac{|\cos\theta|^{1-\frac{2}{p}} }{ \sqrt[]{1-q^2\sin^2\theta} }\,d\theta, \quad
 \mathrm{F}_{2,p}(x,q):=\int_0^{x} \frac{1}{ \sqrt[p]{1-q^2\sin^2\theta} }\,d\theta,
\end{align*}
and also the corresponding complete $p$-elliptic integrals $\K_{1,p}(q)$ and $\K_{2,p}(q)$ by 
\begin{align*}
    \K_{1,p}(q):=\mathrm{F}_{1,p}(\pi/2, q), \quad \K_{2,p}(q):=\mathrm{F}_{2,p}(\pi/2, q).
\end{align*}
For $q=1$, we define 
\[
\mathrm{F}_{1,p}(x,1)=\mathrm{F}_{2,p}(x,1):=\displaystyle \int_0^{x} \frac{d\theta}{|\cos \theta|^{\frac{2}{p} } }, \quad \text{where}\ 
\begin{cases}
x\in(-\frac{\pi}{2},\frac{\pi}{2}) \quad &\text{if} \ \ 1< p \leq  2,  \\
x\in\R &\text{if} \ \ p>2,
\end{cases}
\]
and
\begin{align} \notag
\K_{1,p}(1)=\K_{2,p}(1)=\K_{p} (1) := 
\begin{cases}
\infty \quad &\text{if} \ \ 1< p \leq  2,  \\
\displaystyle \int_0^{\frac{\pi}{2}} \frac{d\theta}{(\cos \theta)^{\frac{2}{p} } } < \infty &\text{if} \ \ p>2.
\end{cases}
\end{align}
\end{definition}

\begin{definition}[$p$-Elliptic integrals of the second kind]\label{def:E_p}
Let $p\in (1,\infty)$.
We define the incomplete $p$-elliptic integrals of the second kind $\E_{1,p}(x,q)$ and $\E_{2,p}(x,q)$ 
of modulus $q\in[0,1]$, where $x\in\R$, by 
\[
\E_{1,p}(x,q):=\int_0^{x} \sqrt{1-q^2 \sin^2 \theta}\, |\cos \theta|^{1-\frac{2}{p}}\,d\theta, 
\quad
\E_{2,p}(x,q):=\int_0^{x} \sqrt[p]{1-q^2 \sin^2 \theta} \,d\theta,
\]
and also the corresponding complete $p$-elliptic integrals $\E_{1,p}(q)$ and $\E_{2,p}(q)$ by 
\begin{align*}
    \E_{1,p}(q):=\E_{1,p}(\pi/2, q), \quad \E_{2,p}(q):=\E_{2,p}(\pi/2, q).
\end{align*}
\end{definition}

\begin{remark}
If $p=2$, then the functions $\mathrm{F}_{1,2}$ and $\mathrm{F}_{2,2}$ (resp.\ $\E_{1,2}$ and $\E_{2,2}$) coincide with the classical elliptic integral of the first kind $\mathrm{F}$ (resp.\ of the second kind $\E$).
In addition, our complete $p$-elliptic integrals coincide with Watanabe's generalization in \cite{nabe14}, 
which can be regarded as special cases of Takeuchi's generalization  \cite{Takeuchi_RIMS, Takeuchi16}.
In general, since $1-\frac{2}{p} >-1$, both $\Fcn(x,q)$ and $\Ecn(x,q)$ are well defined for each $x\in\R$ and $q\in[0,1)$.
Also, by definition and periodicity, for any $x\in \R$, $q\in[0,1)$, and $n \in \Z$ we have
\begin{align}\label{eq:period_Ecn}
\begin{split}
\Ecn(x+ n\pi , q) &=  \Ecn(x , q) + 2n\Ecn(q), \ \
\Fcn(x+ n\pi , q) =  \Fcn(x , q) + 2n\Kcn(q), \\
\Edn(x+ n\pi , q) &=  \Edn(x , q) + 2n\Edn(q), \ \ 
\Fdn(x+ n\pi , q) =  \Fdn(x , q) + 2n\Kdn(q).
\end{split}
\end{align}
\end{remark}


Next we generalize the Jacobian elliptic functions:

\begin{definition}[$p$-Elliptic functions] \label{def:cndn}
Let $p\in(1,\infty)$ and $q \in[0, 1]$.
We define $\amcn(x,q)$ by the inverse function of $\mathrm{F}_{1,p}(x, q)$, so that
\begin{align*} 
x= \int_0^{\amcn(x,q)} \frac{|\cos\theta|^{1-\frac{2}{p}} }{ \sqrt{1-q^2\sin^2\theta} }\,d\theta \qquad \text{for}\ x\in\R.
\end{align*}
We define $\sn_p(x,q)$, \emph{$p$-elliptic sine function} with modulus $q$, by 
\begin{align}\label{eq:sn_p}
\sn_p (x, q):= \sin \amcn(x,q), \quad x\in \R,
\end{align}
and define $\cn_p(x,q)$, \emph{$p$-elliptic cosine function} with modulus $q$, by 
\begin{align}\label{eq:cn_p}
\cn_p (x, q):= |\cos \amcn(x,q)|^{\frac{2}{p}-1} \cos \amcn(x,q), \quad x\in \R.
\end{align}
In addition, we also define $\amdn(x,q)$ by the inverse function of $\mathrm{F}_{2,p}(x, q)$,
\begin{align}\label{eq:am2}
x= \int_0^{\amdn(x,q)} \frac{1}{ \sqrt[p]{1-q^2\sin^2\theta} }\,d\theta \qquad \text{for}\ x\in\R,
\end{align}
and define $\dn_p(x,q)$, \emph{$p$-delta amplitude function} with modulus $q$, by 
\begin{align}\label{eq:dn_p}
\dn_p (x, q):= \sqrt[p]{1-q^2 \sin^2 \big(\amdn(x, q)\big)}, \quad x\in \R.
\end{align}
\end{definition}

\begin{remark}
The amplitude functions $\amcn(\cdot,q)$ and $\amdn(\cdot,q)$ are strictly increasing functions from the domain $\R$ to the range $\R$ unless $q=1$ and $p\leq2$, while if $q=1$ and $p\leq2$ then their ranges are the bounded interval $(-\frac{\pi}{2},\frac{\pi}{2})$.
\end{remark}

\begin{remark}
The above functions for $p=2$, namely $\sn_2$, $\cn_2$, and $\dn_2$, coincide with the classical Jacobian elliptic functions.
It follows from \eqref{eq:sn_p} and \eqref{eq:cn_p} that $\sn_p$ and $\cn_p$ satisfy a generalized trigonometric identity of the form
\begin{align}\label{eq:0916-1}
    |\sn_p(x, q)|^2 + |\cn_p (x, q)| ^p = 1.
\end{align} 
On the other hand, by definition we have $\amcn(x, q) \not\equiv \amdn(x, q)$ unless $p=2$, and hence $\cn_p$ and $\dn_p$ are defined somewhat independently.
In particular, we do not find any simple relation between $\dn_p$ and $\cn_p$ (or $\sn_p$); for example, if $p\neq2$, then
$
\dn_p^2(x, q) - q^2 |\cn_p(x, q)|^p \not\equiv 1-q^2.
$
\end{remark}

\begin{remark} 
Our definitions of $p$-elliptic functions are different from Takeuchi's generalization in \cite{Takeuchi12} (although in particular his $\sn_{p2}$ and $\cn_{p2}$ also satisfy \eqref{eq:0916-1}, cf.\ \cite[page 28]{Takeuchi12}).
The main difference already appears in the periods of those functions, i.e., the corresponding complete elliptic integrals of the first kind.
In fact, on one hand, we have the following alternative forms of our $\Kcn(q)$ and $\Kdn(q)$:
\begin{equation*}
    \Kcn(q)=\int_0^1\frac{dz}{\sqrt[p]{1-z^2}\sqrt{1-q^2z^2}}, \quad \Kdn(q)=\int_0^1\frac{dz}{\sqrt{1-z^2}\sqrt[p]{1-q^2z^2}}.
\end{equation*}
On the other hand, Takeuchi's definition takes the form of
\[
\K_{pr}(q)=\int_0^1\frac{dz}{\sqrt[p]{1-z^r}\sqrt[p]{1-q^rz^r}},
\]
so that they are different even if $r=2$.
\end{remark}

Now we also define the $p$-hyperbolic secant function $\sech_p$.
It is well known that the classical Jacobian elliptic functions satisfy
\[\cn(x,1)=\dn(x,1) = \sech x \quad \text{for any} \ \  x\in \R, \]
and this hyperbolic secant appears in a parametrization of the borderline elastica.
From this point of view, we next introduce a generalization of the hyperbolic secant function.
By definition, if $1<p \leq 2$, then $\cn_p(x,1)=\dn_p(x,1)$ in $(-\K_p(1), \K_p(1))=\R$.
However, if $p>2$, then $\cn_p(x,1)$ and $\dn_p(x,1)$ do not agree in the whole $\R$ but only partially, e.g.\ on the bounded interval $ (-\K_p(1), \K_p(1) )$; more precisely, $\cn_p(\cdot,1)$ changes the sign and $2\K_p(1)$-antiperiodic but $\dn_p(\cdot,1)$ is nonnegative and $2\K_p(1)$-periodic.
Here we choose the definition of $\sech_p$ to be the zero extension of those functions from $ (-\K_p(1), \K_p(1) )$ to $\R$, cf.\ Figure~\ref{fig:sech}.
This definition will be useful for describing flat-core $p$-elasticae.

\begin{definition}[$p$-Hyperbolic secant function] \label{def:sech}
Let $p\in(1,\infty)$.
We define 
\begin{align}\label{eq:sech_p} 
 \sech_p x:= 
\begin{cases}
 \cn_p(x,1)=\dn_p(x,1), \quad & x\in (-\K_p(1), \K_p(1)), \\
 0, &x\in \R \setminus (-\K_p(1), \K_p(1)).
\end{cases}
\end{align}
When $1<p \leq 2$, we regard $(-\K_p(1), \K_p(1))$ as $\R$.
\end{definition}

\begin{center}
  \begin{figure}[htbp]
      \includegraphics[scale=0.4]{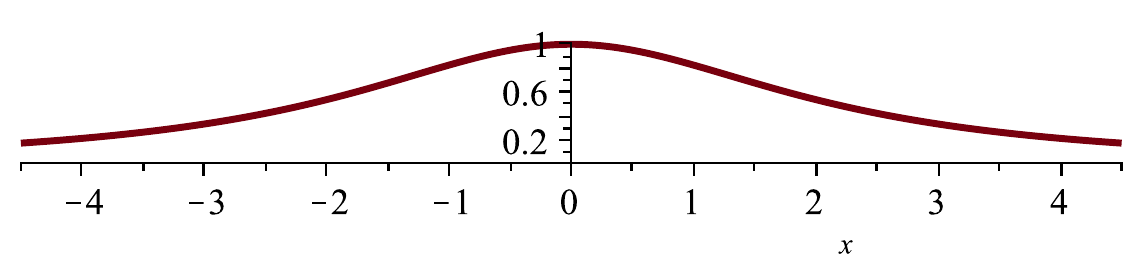}\\
      \includegraphics[scale=0.2]{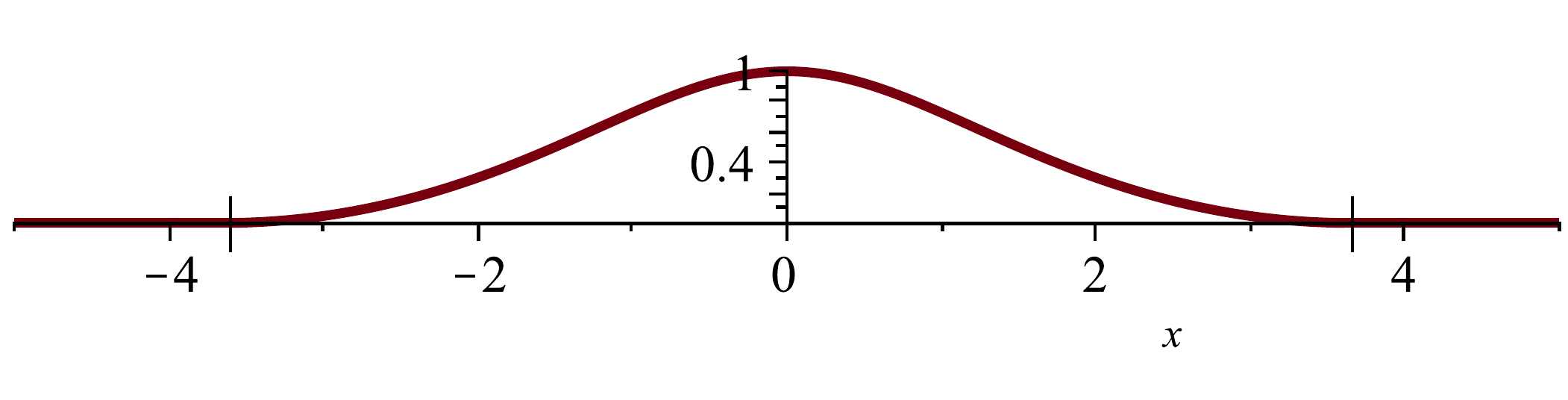}\\
      \includegraphics[scale=0.4]{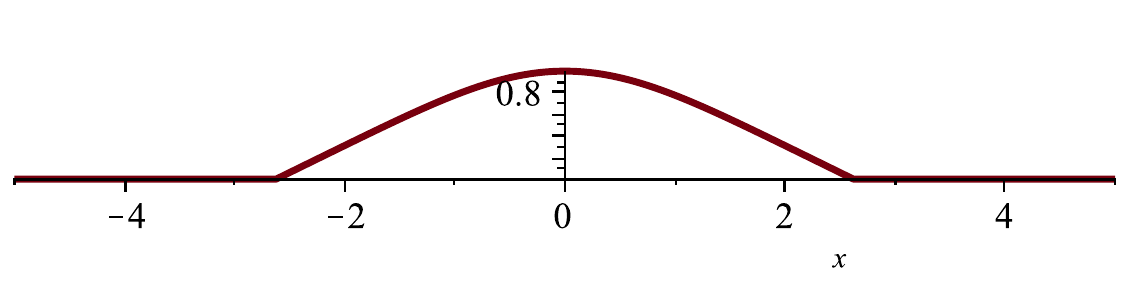}
      \includegraphics[scale=0.4]{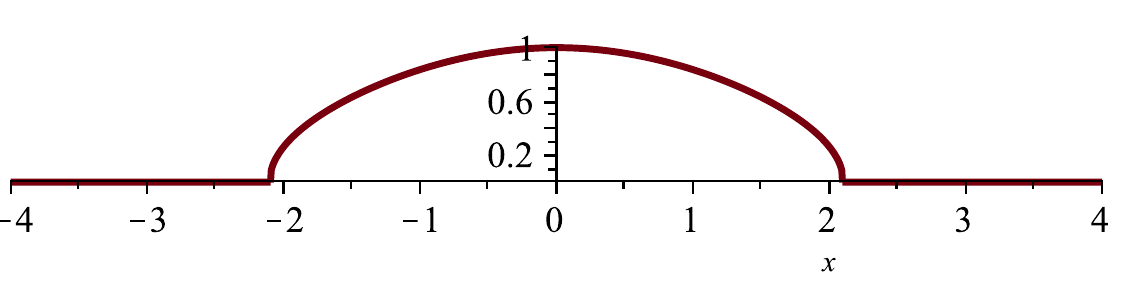}
  \caption{The graphs of $\sech_p$ with $p=\frac{6}{5}$, $p=3$, $p=4$, and $p=6$ (from top to bottom).
  The positive part is bounded if and only if $p>2$.}
  \label{fig:sech}
  \end{figure}
\end{center}

In addition, extending the classical relation $\tanh{x}=\int_0^x(\sech{t})^2dt$, we introduce a new $p$-hyperbolic tangent function:

\begin{definition}[$p$-Hyperbolic tangent function] \label{def:tan_p}
Let $p\in(1,\infty)$.
We define
\[\tanh_p x:=\int_0^x (\sech_p t)^p dt, \qquad x\in\R. \]
\end{definition}

Similar to the classical Jacobian elliptic functions, $\cn_p$, $\dn_p$, and $\sech_p$ satisfy the following fundamental properties such as periodicity:

\begin{proposition}\label{prop:property-cndn}
Let $\sn_p$, $\cn_p$, $\dn_p$, and $\sech_p$ be given by \eqref{eq:sn_p}, \eqref{eq:cn_p}, \eqref{eq:dn_p}, and \eqref{eq:sech_p}, respectively. 
Then the following statements hold:
\begin{itemize}
\item [(i)] For $q\in[0,1)$, $\sn_p(\cdot,q)$ is an odd $2\K_{1,p}(q)$-antiperiodic function on $\R$ and, in $[-\K_{1,p}(q), \K_{1,p}(q)]$, strictly increasing from $-1$ to $1$.
\item [(ii)] For $q\in[0,1)$, $\cn_p(\cdot, q)$ is an even $2\K_{1,p}(q)$-antiperiodic function on $\R$ and, in $[0, 2\K_{1,p}(q)]$, strictly decreasing from $1$ to $-1$. 
\item [(iii)] For $q\in[0,1)$, $\dn_p(\cdot, q)$ is an even, positive, $2\K_{2,p}(q)$-periodic function on $\R$ and, in $[0, \K_{2,p}(q)]$, strictly decreasing from $1$ to $\sqrt[p]{1-q^2 }$. 
\item [(iv-1)] If $1< p \leq 2$, then $\sech_p $ is an even positive function on $\R$, and strictly decreasing in $[0, \infty)$.
Moreover, $\sech_p 0=1$ and $\sech_p x \to 0$ as $x \to \infty$.
\item [(iv-2)] 
If $p> 2$, then $\sech_p $ is an even nonnegative function on $\R$, and strictly decreasing in $[0, \K_{p} (1))$. 
Moreover, $\sech_p 0=1$ and 
$\sech_p x \to 0$
as $x \uparrow \K_{p} (1)$.
In particular, $\sech_p $ is continuous on $\R$.
\end{itemize}
\end{proposition}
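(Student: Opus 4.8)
The plan is to reduce every assertion to elementary monotonicity and parity properties of the two amplitude functions $\amcn(\cdot,q)$ and $\amdn(\cdot,q)$, which are by definition the inverses of the strictly increasing integrals $\Fcn(\cdot,q)$ and $\Fdn(\cdot,q)$, and then transport these properties through the defining formulas \eqref{eq:sn_p}, \eqref{eq:cn_p}, \eqref{eq:dn_p}, and \eqref{eq:sech_p}. First I would record the structural facts about the amplitudes. For $q\in[0,1)$ the integrands in Definition \ref{def:K_p} are positive (their denominators never vanish, and $|\cos\theta|^{1-\frac{2}{p}}$ is integrable since $1-\frac{2}{p}>-1$), even, and $\pi$-periodic; hence $\Fcn(\cdot,q)$ and $\Fdn(\cdot,q)$ are odd, continuous, strictly increasing bijections of $\R$, and so are their inverses $\amcn(\cdot,q)$ and $\amdn(\cdot,q)$. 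Inverting the quasiperiodicity relations \eqref{eq:period_Ecn} yields the translation rules $\amcn(x+2n\Kcn(q),q)=\amcn(x,q)+n\pi$ and the analogue for $\amdn$, together with the normalizations $\amcn(\Kcn(q),q)=\amdn(\Kdn(q),q)=\tfrac{\pi}{2}$.

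Then items (i)--(iii) follow by composition. For (i) and (ii) I would compose with $\sin$ and with $g(u):=|\cos u|^{\frac{2}{p}-1}\cos u$: oddness of $\sn_p$ and evenness of $\cn_p$ come from the parity of $\amcn(\cdot,q)$ together with $\sin(-u)=-\sin u$ and $g(-u)=g(u)$; the $2\Kcn(q)$-antiperiodicity comes from $\sin(u+\pi)=-\sin u$ and $g(u+\pi)=-g(u)$; and the stated monotonicity follows because $\amcn(\cdot,q)$ is increasing, while $\sin$ is increasing on $[-\tfrac{\pi}{2},\tfrac{\pi}{2}]$ and $g$ is strictly decreasing from $1$ to $-1$ on $[0,\pi]$ (verified by treating the sign branches on each side of $u=\tfrac{\pi}{2}$ separately). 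For (iii) the decisive difference is that $\sin^2(u+\pi)=\sin^2 u$ produces genuine $2\Kdn(q)$-\emph{periodicity} rather than antiperiodicity; positivity is immediate from $1-q^2\sin^2\theta\geq 1-q^2>0$, and the decrease from $1$ to $\sqrt[p]{1-q^2}$ on $[0,\Kdn(q)]$ follows since $\sin^2$ increases on $[0,\tfrac{\pi}{2}]$.

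The main work is the limiting case $q=1$ in (iv). Here I would first observe that for $q=1$ the two first-kind integrals coincide, $\Fcn(\cdot,1)=\Fdn(\cdot,1)=\int_0^{x}|\cos\theta|^{-\frac{2}{p}}\,d\theta$, so that $\amcn(\cdot,1)=\amdn(\cdot,1)$; a comparison near $\theta=\tfrac{\pi}{2}$ shows this integral diverges at $\tfrac{\pi}{2}$ exactly when $p\leq2$. Thus for $1<p\leq2$ the amplitude maps $\R$ onto $(-\tfrac{\pi}{2},\tfrac{\pi}{2})$, where $\cos>0$, whence $\sech_p x=(\cos\amcn(x,1))^{\frac{2}{p}}=\cn_p(x,1)=\dn_p(x,1)$ is even, positive, strictly decreasing on $[0,\infty)$, with $\sech_p 0=1$ and limit $0$ as $x\uparrow\infty$, giving (iv-1). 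For $p>2$ the integral converges, $\amcn(\cdot,1)$ is again a bijection of $\R$ with $\amcn(\K_p(1),1)=\tfrac{\pi}{2}$, and on $(-\K_p(1),\K_p(1))$ one has $\cos\amcn>0$, so $\cn_p(\cdot,1)$ and $\dn_p(\cdot,1)$ agree precisely on that interval; the same monotonicity argument then yields (iv-2).

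The one point demanding care is the continuity claim at the endpoints in (iv-2): I must check that the common value of $\cn_p(\cdot,1)$ and $\dn_p(\cdot,1)$ tends to $0$ exactly as $s\uparrow\K_p(1)$, so that it matches the zero extension and $\sech_p$ is continuous on all of $\R$. This reduces to $\cos\amcn(s,1)\to 0$ as $s\uparrow\K_p(1)$, which holds by $\amcn(\K_p(1),1)=\tfrac{\pi}{2}$ and continuity of the amplitude. Everything else is bookkeeping on the amplitude functions, so I expect this boundary matching to be the only genuinely delicate step.
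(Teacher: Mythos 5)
Your proposal is correct and follows essentially the same route as the paper: the paper proves only item (ii) (declaring the rest elementary) by combining the oddness of $\amcn(\cdot,q)$, the translation rule $\amcn(x+2\K_{1,p}(q),q)=\amcn(x,q)+\pi$ inherited from \eqref{eq:period_Ecn}, and the monotonicity of the composed maps, which is exactly your strategy. You simply carry out the same bookkeeping for all five items, including the $q=1$ endpoint matching for $\sech_p$ that the paper leaves to the reader.
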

\begin{proof}
Since all the properties are elementary, we only demonstrate the proof of statement (ii) for $\cn_p$.
First, the evenness of $\cn_p$ follows since $x\mapsto \cos x$ is even and $x \mapsto \amcn(x, q)$ is odd.
Let us check the periodicity of $\cn_p$.
Since $\amcn$ is the inverse of $\Fcn$, and since $\Fcn$ is one-to-one and has periodicity \eqref{eq:period_Ecn}, we have
\begin{align} \notag
\amcn(x + 2n \K_{1,p}(q), q) = \amcn(x, q) + n\pi, \quad \text{for any} \quad n\in\N, \  x\in \R, 
\end{align}
and in particular, $\amcn(x + 2 \K_{1,p}(q), q)=\amcn(x, q) + \pi$.
Combining this with the definition of $\cn_p$, we obtain $\cn_p (x+2 \K_{1,p}(q),q) =- \cn_p (x,q)$, 
which means that $\cn_p$ is $2\K_{1,p}(q)$-antiperiodic.
Finally, the desired monotonicity of $\cn_p(\cdot,q)$ on the interval $[0, 2\K_{1,p}(q)]$ follows by the fact that $\cn_p$ is the composition of the increasing bijection
$\amcn:[0, 2\K_{1,p}(q)]\to[0,\pi]$, decreasing $\cos:[0,\pi]\to[-1,1]$, and increasing $x\mapsto |x|^{\frac{2}{p}-1}x$ from $[-1,1]$ to $[-1,1]$.
\end{proof}

\subsection{Regularity of $p$-elliptic functions}

Recall that the classical Jacobian elliptic functions $\cn_2$, $\dn_2$, and $\sech_2$ are analytic.
Below we shall show that our $p$-elliptic functions may not be smooth, depending on $p$.

First we indicate that $\dn_p$ is always analytic (unless $q=1$).

\begin{proposition}[Regularity of $\dn_p$]\label{thm:regularity_dn}
Let $p\in(1,\infty)$ and $q\in[0,1)$.
Then
$\dn_p(\cdot, q)$ is analytic on $\R$.
\end{proposition}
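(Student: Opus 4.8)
The plan is to show that $\dn_p(\cdot,q)$ is analytic by exhibiting it as a composition of analytic functions, using the fact that $q\in[0,1)$ keeps everything away from the singular locus. Recall from Definition~\ref{def:cndn} that $\dn_p(x,q)=\sqrt[p]{1-q^2\sin^2(\amdn(x,q))}$, where $\amdn(\cdot,q)$ is the inverse of $\Fdn(\cdot,q)$. The first step is to analyze $\amdn$: by the inverse function theorem for analytic functions, it suffices to show that $\Fdn(\cdot,q)$ is analytic with a nonvanishing derivative. Since $\Fdn(x,q)=\int_0^x(1-q^2\sin^2\theta)^{-1/p}\,d\theta$, we have $\partial_x\Fdn(x,q)=(1-q^2\sin^2\theta)^{-1/p}\big|_{\theta=x}$. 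For $q\in[0,1)$ the quantity $1-q^2\sin^2\theta$ stays in the compact interval $[1-q^2,1]\subset(0,\infty)$, uniformly in $x\in\R$, so the integrand is a composition of the analytic map $\theta\mapsto 1-q^2\sin^2\theta$ (real-analytic, with values in the positive reals) with the real-analytic function $t\mapsto t^{-1/p}$ on $(0,\infty)$. Hence $\partial_x\Fdn$ is real-analytic and strictly positive, so $\Fdn(\cdot,q)$ is a real-analytic diffeomorphism of $\R$ onto $\R$.

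The second step invokes the analytic inverse function theorem: the inverse of a real-analytic bijection with nowhere-vanishing derivative is itself real-analytic. This gives that $\amdn(\cdot,q)$ is analytic on $\R$, with $\partial_x\amdn(x,q)=\big(1-q^2\sin^2(\amdn(x,q))\big)^{1/p}>0$. The third step is to feed this back into the definition of $\dn_p$: we have $\dn_p(x,q)=g\big(\amdn(x,q)\big)$, where $g(\theta):=\big(1-q^2\sin^2\theta\big)^{1/p}$. Again because $q<1$ the argument $1-q^2\sin^2\theta$ remains in $[1-q^2,1]\subset(0,\infty)$ for all real $\theta$, so $g$ is the composition of the real-analytic trigonometric expression with the real-analytic power map $t\mapsto t^{1/p}$ restricted to the positive reals; thus $g$ is analytic on $\R$. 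Composition of analytic functions being analytic, $\dn_p(\cdot,q)=g\circ\amdn(\cdot,q)$ is analytic on $\R$.

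The reason this works so cleanly — and the point to emphasize — is that the restriction $q\in[0,1)$ is exactly what prevents the base $1-q^2\sin^2\theta$ from reaching $0$, where the fractional power $t\mapsto t^{1/p}$ loses analyticity. This is precisely why the statement excludes $q=1$: at $q=1$ the factor $1-\sin^2\theta=\cos^2\theta$ vanishes at $\theta=\pm\frac{\pi}{2}$, and the fractional power develops the non-smooth behavior that underlies the loss of regularity in the wavelike and flat-core cases discussed earlier. There is no genuine obstacle here; the only point requiring a little care is the uniformity of the lower bound $1-q^2\sin^2\theta\geq 1-q^2>0$ over all of $\R$, which guarantees we never approach the branch point of the $p$-th root and lets us treat the fractional powers as honest real-analytic functions throughout. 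The argument is notably simpler than for $\cn_p$, whose definition \eqref{eq:cn_p} carries the factor $|\cos\amcn|^{\frac{2}{p}-1}$ that is singular wherever $\cos\amcn=0$ regardless of $q$.
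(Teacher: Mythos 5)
Your proof is correct and follows essentially the same route as the paper: analyticity of the strictly positive integrand gives analyticity of $\Fdn(\cdot,q)$, the analytic inverse function theorem gives analyticity of $\amdn(\cdot,q)$, and the conclusion follows since $\dn_p(\cdot,q)$ is a composition of analytic functions, the key point being that $1-q^2\sin^2\theta\geq 1-q^2>0$ keeps the fractional power away from its branch point. The additional remarks on the role of $q<1$ are accurate but not needed for the argument.
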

\begin{proof}
Since $q\in[0,1)$, the function $x \mapsto (1-q^2\sin^2x)^{-\frac{1}{p}}$ is analytic on $\R$, and hence its antiderivative $\Fdn(\cdot,q)$ is also analytic.
Since the derivative $(1-q^2\sin^2x)^{-\frac{1}{p}}$ of $\Fdn(\cdot, q)$ is strictly positive, the inverse $\amdn(\cdot, q)$ of $\Fdn(\cdot, q)$ is also analytic.
Thus $\dn_p(\cdot, q)$, a composition of analytic functions, is also analytic.
\end{proof}

More delicate is the regularity of $\cn_p$ and $\sech_p$ since they are defined through a composition of functions which may not be smooth, including
\begin{align*}
\Psi(x):= |x|^{\frac{2}{p}-1}x.
\end{align*}
Let $Z_{p,q}$ be the set of zero points of $\cn_p(\cdot, q)$, namely
\[Z_{p,q}:=\Set{ x\in \R | \cn_p(x, q) =0  } 
=\Set{ (2n+1)\K_{1,p}(q) | n\in \Z }.\]

\begin{proposition}[Regularity of $\cn_p$] \label{thm:regularity_cn}
Let $p\in(1,\infty)$ and $q\in[0,1)$. 
Then $cn_p(\cdot, q)$ is analytic on $\R\setminus Z_{p,q}$.
Furthermore, the following statements hold:
\begin{itemize}
\item[(i)] If $\frac{1}{p-1}$ is an odd integer, then $\cn_p(\cdot, q)$ is analytic on $\R$.
\item[(ii)] If $\frac{1}{p-1}$ is an even integer, then 
$\cn_p(\cdot, q) \in W^{m_p, \infty}(\R)$ but $\cn_p(\cdot, q) \notin  W^{m_p+1,1}_{\rm loc}(\R)$.
\item[(iii)] If $\frac{1}{p-1}$ is not an integer, then 
$\cn_p(\cdot, q) \in W^{m_p, r}_{\rm loc}(\R)$ for any $r\in[1,r_p)$, but $\cn_p(\cdot, q) \notin  W^{m_p,r_p}_{\rm loc}(\R)$. 
\end{itemize}
Here $m_p:=\lceil \frac{1}{p-1} \rceil\geq 1$ and $r_p:=(m_p-\frac{1}{p-1})^{-1} \in(1, \infty)$ as in Theorem \ref{thm:Optimal_regularity_1}.
\end{proposition}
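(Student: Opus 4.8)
The plan is to localize everything at the zeros of $\cn_p(\cdot,q)$, since away from them the function is a composition of analytic maps. On $\R\setminus Z_{p,q}$ we have $\cos\amcn(x,q)\neq0$; there the integrand defining $\Fcn(\cdot,q)$ is analytic with strictly positive derivative (here $q<1$ keeps $\sqrt{1-q^2\sin^2\theta}$ analytic and bounded away from $0$), so $\amcn(\cdot,q)$ is analytic by the analytic inverse function theorem, and since $\Psi(y)=|y|^{\frac2p-1}y$ is analytic for $y\neq0$, the composition $\cn_p=\Psi(\cos\amcn)$ is analytic. This settles the first assertion and reduces the trichotomy to a single-point analysis at, say, $x_0=\K_{1,p}(q)$, all other zeros being equivalent by the evenness and $2\K_{1,p}(q)$-antiperiodicity from Proposition~\ref{prop:property-cndn}.

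Next I would derive a clean quadrature near $x_0$. On a neighborhood of $x_0$ the function $v:=\cn_p(\cdot,q)$ is a strictly decreasing bijection onto a neighborhood of $0$, so $x$ may be regarded as a function of $v$. Using $\cos\amcn=\sgn(v)|v|^{p/2}$, $\sin\amcn=\sqrt{1-|v|^p}$, and $\amcn'=\sqrt{1-q^2\sin^2\amcn}\,|\cos\amcn|^{\frac2p-1}$, a direct computation gives
\[
\frac{dx}{dv}=-\frac{p}{2}\,\frac{|v|^{p-2}}{\sqrt{(1-|v|^p)(1-q^2+q^2|v|^p)}},
\]
whence $x-x_0=-\tfrac p2\int_0^v |w|^{p-2}\Phi_0(|w|^p)\,dw$ with $\Phi_0(\rho):=[(1-\rho)(1-q^2+q^2\rho)]^{-1/2}$ analytic near $\rho=0$ and $\Phi_0(0)=(1-q^2)^{-1/2}\neq0$. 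Expanding $\Phi_0$ in a convergent power series and integrating term by term (legitimate since $p(n+1)-2>-1$) yields the representation
\[
x-x_0=\sgn(v)\,|v|^{p-1}\,\Phi_1(|v|^p),\qquad \Phi_1 \text{ analytic near } 0,\ \Phi_1(0)\neq0.
\]

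The heart of the argument is to invert this relation analytically, and this is where I expect the main obstacle to lie: the bookkeeping that every non-analytic ingredient enters only through the composite variable $|x-x_0|^{p/(p-1)}$, which is precisely what forces the exponents into their arithmetic trichotomy. Writing $\sigma:=|x-x_0|^{\frac{p}{p-1}}$ and $\rho:=|v|^p$, the displayed identity becomes $\sigma=\rho\,|\Phi_1(\rho)|^{\frac{p}{p-1}}=:\Theta(\rho)$ with $\Theta$ analytic, $\Theta(0)=0$, $\Theta'(0)\neq0$; the analytic inverse function theorem gives $\rho=\sigma\,\tilde N(\sigma)$ with $\tilde N$ analytic and $\tilde N(0)\neq0$, and taking $p$-th roots produces the normal form
\[
\cn_p(x,q)=-\sgn(x-x_0)\,|x-x_0|^{\frac{1}{p-1}}\,M\!\left(|x-x_0|^{\frac{p}{p-1}}\right),
\]
with $M$ analytic and $M(0)\neq0$.

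Finally I would read off the three cases from the normal form. Expanding $M(s)=\sum_k m_k s^k$ gives $\cn_p(x,q)=-\sum_k m_k\,\sgn(x-x_0)|x-x_0|^{\beta_k}$ with $\beta_k=\frac{p}{p-1}(k+1)-1$; setting $\alpha:=\frac1{p-1}$, the slowest term is $\beta_0=\alpha$, while all $\beta_k$ with $k\ge1$ satisfy $\beta_k\ge 2\alpha+1>m_p$ and are strictly more regular. Using that $g_\alpha(t):=\sgn(t)|t|^{\alpha}$ satisfies $g_\alpha^{(j)}\asymp|t|^{\alpha-j}$: if $\alpha$ is an odd integer then $\tfrac{p}{p-1}$ is even, every $\beta_k$ is an odd integer, each term is a monomial, and the series is a genuine (odd) power series, so $\cn_p$ is analytic --- case (i). If $\alpha$ is an even integer then $m_p=\alpha$ and $g_\alpha^{(m_p)}=\alpha!\,\sgn(t)$ is bounded but discontinuous, so after peeling off the higher terms (which lie in $W^{m_p+1,1}_{\mathrm{loc}}$) we get $\cn_p\in W^{m_p,\infty}$ and $\cn_p\notin W^{m_p+1,1}_{\mathrm{loc}}$ --- case (ii). If $\alpha$ is not an integer then $m_p=\lceil\alpha\rceil$, and $g_\alpha^{(m_p)}\asymp|t|^{\alpha-m_p}$ with $\alpha-m_p\in(-1,0)$ lies in $L^r_{\mathrm{loc}}$ exactly for $r<r_p=(m_p-\alpha)^{-1}$, giving $\cn_p\in W^{m_p,r}_{\mathrm{loc}}$; since $\cn_p^{(m_p)}$ is unbounded near $x_0$ whereas $\cn_p\in W^{m_p+1,1}_{\mathrm{loc}}$ would force $\cn_p^{(m_p)}$ to be continuous (by the one-dimensional embedding $W^{1,1}\hookrightarrow C^0$), we conclude $\cn_p\notin W^{m_p+1,r_p}_{\mathrm{loc}}$ --- case (iii). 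The global statements then follow from antiperiodicity.
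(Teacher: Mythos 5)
Your proposal is correct, and it takes a genuinely different route from the paper. The paper handles case (i) by observing that $w:=|\cn_p|^{p-2}\cn_p$ solves the semilinearized Euler--Lagrange ODE, which becomes polynomial when $\frac{1}{p-1}$ is odd, and then invokes analyticity of solutions to analytic ODEs; for (ii) and (iii) it derives an explicit inductive formula $\frac{d^m}{dx^m}\cn_p(x,q)=c_m|\cos a(x)|^{\frac{2}{p}(m+1)-2m}(\cdots)\,g_m(\sin a(x))$ with $g_m(\pm1)\neq0$ and reads off boundedness/jump/integrability directly, computing the borderline exponent by the change of variables $\xi=\amcn(x,q)$. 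You instead invert the quadrature $dx/dv$ to obtain a local normal form $\cn_p(x,q)=-\sgn(t)|t|^{\frac{1}{p-1}}M(|t|^{\frac{p}{p-1}})$ with $M$ analytic and $M(0)\neq0$ at each zero, from which all three cases, including the analyticity in (i), follow uniformly from the arithmetic of the exponents $\beta_k=\frac{p}{p-1}(k+1)-1$; your computation of $dx/dv$, the analyticity of $\Phi_0,\Phi_1,\Theta$, and the analytic inversion are all correct (and in (iii) your leading term actually gives the sharper conclusion $\cn_p\notin W^{m_p,r_p}_{\rm loc}$, matching what the paper proves). What your approach buys is a unified single-point analysis with the full local asymptotic structure and no appeal to ODE theory; what the paper's buys is a shorter, purely computational path that avoids the one step you leave slightly informal, namely the ``peeling off'' of the tail $-\sgn(t)|t|^{2\alpha+1}M_1(|t|^{1+\alpha})$: to be complete you should add a line (Leibniz plus the fact that each derivative of $|t|^{1+\alpha}$ up to order $m_p$ costs one power of $|t|$ and remains continuous) showing this remainder is $C^{m_p}$ with $(m_p+1)$-st derivative in $L^1_{\rm loc}$, so that the singular behaviour is carried entirely by $M(0)\sgn(t)|t|^{\alpha}$. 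This is routine, so I regard your argument as complete in substance.
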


We postpone the detailed arguments for Proposition~\ref{thm:regularity_cn} to Appendix~\ref{sect:proof-regularity-cn-sech} since the rigorous proof involves explicit but delicate and lengthy computations.
Roughly speaking, in cases (ii) and (iii) we explicitly compute the derivatives by using our explicit definitions, while in case (i) we rather appeal to the analytic structure of ODE satisfied by $\cn_p$.

A similar argument also works for $\sech_p$, again postponed to Appendix~\ref{sect:proof-regularity-cn-sech}.

\begin{proposition}[Regularity of $\sech_p$] \label{thm:regularity_sech}
If $p\in(1,2]$, then $\sech_p$ is analytic on $\R$.
If $p\in(2,\infty)$, then $\sech_p$ is analytic on $\R\setminus \{ \pm \K_p(1)\}$, and in addition the following statements hold:
\begin{itemize}
\item[(i)] If $\frac{2}{p-2}$ is an integer, then $\sech_p \in W^{M_p, \infty} (\R)$ but $\sech_p  \notin W^{M_p+1, 1}_{\rm loc}(\R )$.
\item[(ii)] If $\frac{2}{p-2}$ is not an integer, then $\sech_p \in W^{M_p, r}_{\rm loc}(\R)$ for any $r\in [1,R_p)$, but $\sech_p \notin W^{M_p, R_p}_{\rm loc}(\R)$.
\end{itemize}
Here $M_p:=\lceil \frac{2}{p-2} \rceil\geq 1$ and $R_p:=(M_p-\frac{2}{p-2})^{-1} \in(1,\infty)$ as in Theorem \ref{thm:Optimal_regularity_2}.
\end{proposition}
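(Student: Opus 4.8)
The plan is to mirror the structure of Proposition~\ref{thm:regularity_cn}, whose proof strategy is already outlined in the excerpt, since $\sech_p$ is (on its support) precisely the $q=1$ specialization of $\cn_p(\cdot,1)=\dn_p(\cdot,1)$. First I would record the analyticity away from the critical points. On the open interval $(-\K_p(1),\K_p(1))$ the function $\sech_p$ is a composition of analytic pieces: the integrand $\theta\mapsto|\cos\theta|^{-2/p}$ defining $\Fcn(\cdot,1)$ is analytic where $\cos\theta\neq0$, its inverse $\amcn(\cdot,1)$ is analytic by the inverse function theorem (the derivative $|\cos\theta|^{2/p}$ being strictly positive on the relevant range), and $\sech_p=|\cos\amcn(\cdot,1)|^{2/p}$ is then analytic wherever $\cos\amcn(\cdot,1)>0$, i.e.\ on the whole open interval. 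For $p\in(1,2]$ we have $\K_p(1)=\infty$, so this already gives analyticity on all of $\R$, settling the first assertion. For $p>2$ the only possible failure is at the two endpoints $\pm\K_p(1)$, where $\sech_p$ is glued to the zero extension; this is where the delicate analysis concentrates.

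The heart of the matter is therefore the local behavior of $\sech_p(x)$ as $x\uparrow\K_p(1)$ (the point $-\K_p(1)$ being symmetric by evenness). Here I would derive the asymptotic expansion of $\sech_p$ near the endpoint. Writing $\phi:=\amcn(x,1)$, so that $\phi\uparrow\tfrac{\pi}{2}$ as $x\uparrow\K_p(1)$, and setting $u:=\tfrac{\pi}{2}-\phi$, the defining relation $\K_p(1)-x=\int_\phi^{\pi/2}(\cos\theta)^{-2/p}\,d\theta$ behaves like $\int_0^u(\sin v)^{-2/p}\,dv\sim c\,u^{1-2/p}$ for some constant $c>0$, because $\cos\theta=\sin(\tfrac{\pi}{2}-\theta)$ and $1-\tfrac{2}{p}>0$ when $p>2$. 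Inverting, $u\sim c'(\K_p(1)-x)^{1/(1-2/p)}=c'(\K_p(1)-x)^{p/(p-2)}$, and since $\sech_p=(\cos\phi)^{2/p}=(\sin u)^{2/p}\sim u^{2/p}$, we obtain the leading order
\begin{align*}
\sech_p(x)\sim C\,(\K_p(1)-x)^{\frac{2}{p-2}}\qquad\text{as }x\uparrow\K_p(1),
\end{align*}
for a positive constant $C$. This exponent $\tfrac{2}{p-2}$ is exactly the quantity governing $M_p$ and $R_p$, matching the heuristic $x\mapsto(x^+)^{2/(p-2)}$ flagged in the remark after Theorem~\ref{thm:Optimal_regularity_2}. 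The same scaling shows more: the full expansion proceeds in powers of $(\K_p(1)-x)^{2/(p-2)}$, so differentiating $M_p$ times against the zero extension on the other side produces precisely a leading singularity of order $(\K_p(1)-x)^{\frac{2}{p-2}-M_p}$.

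With this expansion in hand, the Sobolev statements follow by a standard computation on the model singularity $t\mapsto(t^+)^{2/(p-2)}$ near $t=0$. The $M_p$-th distributional derivative behaves like $t^{\frac{2}{p-2}-M_p}$ up to a constant (with no Dirac contribution exactly when $M_p<\tfrac{2}{p-2}$, i.e.\ in the non-integer subcritical range); such a power is in $L^r_{\rm loc}$ iff $r(M_p-\tfrac{2}{p-2})<1$, i.e.\ iff $r<R_p$, which gives case~(ii), while the borderline $r=R_p$ fails. When $\tfrac{2}{p-2}$ is an integer, the derivative of top order $M_p=\tfrac{2}{p-2}$ is bounded (giving $W^{M_p,\infty}$), but taking one further derivative across the $C^{M_p}$ corner—where the one-sided derivatives disagree—produces a jump, hence a Dirac mass, so $\sech_p\notin W^{M_p+1,1}_{\rm loc}$, which is case~(i). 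I would treat the endpoint contributions via a cutoff localizing near $\pm\K_p(1)$ and combine with the interior analyticity already established. The main obstacle I anticipate is making the asymptotic expansion rigorous to all the orders needed—controlling the error terms in the inversion $x\mapsto u$ and verifying that the subleading corrections do not interfere with the sharp integrability threshold; this is the delicate and lengthy bookkeeping that the excerpt explicitly defers to the appendix, and it is why the result is stated rather than proved in the main text.
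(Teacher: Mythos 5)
Your proposal is correct in substance and reaches the same thresholds, but it organizes the endpoint analysis differently from the paper. You invert the defining integral asymptotically to obtain the power law $\sech_p(x)\sim C\,(\K_p(1)-x)^{\frac{2}{p-2}}$ and then argue on the model singularity $t\mapsto (t^+)^{2/(p-2)}$; the paper instead never writes the power law in $x$ explicitly, but proves by induction the exact formula $\frac{d^m}{dx^m}\sech_p x=(\cos a(x))^{\frac{2}{p}(m+1)-m}h_m(\sin a(x))$ with $h_m(\pm1)\neq0$, reads off $C^{M_p-1}$-regularity from the sign of the exponent, gets case (i) from the nonzero one-sided limits $h_{M_p}(\pm1)$, and settles case (ii) by the change of variables $\eta=a(x)$ inside the $L^r$ integral, reducing sharp integrability to $(\frac{2}{p}(M_p+1)-M_p)r-\frac{2}{p}>-1$, i.e.\ $r<R_p$. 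The two routes are computationally equivalent (your $u=\frac{\pi}{2}-\amcn(x,1)$ is the paper's $\cos a(x)$ up to first order), but the paper's exact inductive formula sidesteps the issue you correctly flag as the main obstacle: justifying term-by-term differentiation of the asymptotic expansion up to order $M_p$ and checking that the leading coefficient of the $M_p$-th derivative does not accidentally vanish — in the paper this is exactly the role of $h_{M_p}(\pm1)\neq0$. If you pursue your route, you would need to upgrade the leading-order asymptotics to a statement such as ``$\sech_p$ is, near the endpoint, $u^{2/p}$ times an analytic function of $u^2$, where $u$ is an analytic function of $(\K_p(1)-x)^{p/(p-2)}$,'' which is doable but amounts to rederiving the paper's structure. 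One small slip: your parenthetical ``no Dirac contribution exactly when $M_p<\tfrac{2}{p-2}$'' cannot be what you mean, since $M_p=\lceil\tfrac{2}{p-2}\rceil\geq\tfrac{2}{p-2}$ always; the correct condition for the absence of a Dirac mass in the $M_p$-th distributional derivative is $M_p-1<\tfrac{2}{p-2}$, which holds in both cases and is why the only obstruction at order $M_p+1$ in case (i) is the jump of the $M_p$-th derivative.
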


\subsection{Differential equation}

Now we prove that $\cn_p$ and $\dn_p$ actually appear as solutions to \eqref{eq:EL}.
Note carefully that $\cn_p$ is not twice differentiable if $p>2$, cf.\ Proposition~\ref{thm:regularity_cn}.
However, it is worth pointing out that the composite of $x \mapsto |x|^{p-2}x$ and $\cn_p$ is of class $C^2$ and hence smooth enough for considering \eqref{eq:EL} in the classical sense.
 
\begin{lemma} \label{lem:classical-sense}
Let $p\in(1, \infty)$ and $q\in[0,1]$.
Then
\[
|\cn_p(\cdot, q)|^{p-2}\cn_p(\cdot, q) \in C^2(\R), 
\quad |\dn_p(\cdot, q)|^{p-2}\dn_p(\cdot, q) \in C^2(\R). 
\]
\end{lemma}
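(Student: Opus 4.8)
The plan is to reduce the claim to an explicit computation of the first and second derivatives, in which the singular factors cancel exactly. Write $a=a(x):=\amcn(x,q)$ and set $\mu:=2-\tfrac{2}{p}\in(0,2)$. First I would record the purely algebraic identity
\[
|\cn_p(x,q)|^{p-2}\cn_p(x,q)=|\cos a|^{1-\frac{2}{p}}\cos a=\sgn(\cos a)\,|\cos a|^{\mu},
\]
which follows immediately from \eqref{eq:cn_p} together with $|\cn_p|=|\cos a|^{\frac{2}{p}}$. Since $\mu>0$, the right-hand side is a continuous function of $x$ (indeed it equals $\Phi(\cn_p)$ for the continuous map $\Phi(t)=|t|^{p-2}t=\sgn(t)|t|^{p-1}$), so $W:=|\cn_p(\cdot,q)|^{p-2}\cn_p(\cdot,q)$ is continuous on $\R$ and vanishes precisely on $Z_{p,q}$.

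Next I would differentiate on $\R\setminus Z_{p,q}$, where $a$ is smooth. From $x=\Fcn(a,q)$ one gets $a'(x)=\sqrt{1-q^2\sin^2 a}\,/\,|\cos a|^{1-\frac{2}{p}}$, and the chain rule yields, after the powers of $|\cos a|$ cancel,
\[
W'=-\mu\,(\sin a)\sqrt{1-q^2\sin^2 a},\qquad
W''=-\mu\,\sgn(\cos a)|\cos a|^{\frac{2}{p}}\,(1-2q^2\sin^2 a).
\]
Using $\sin a=\sn_p(x,q)$ and $\sgn(\cos a)|\cos a|^{\frac{2}{p}}=\cn_p(x,q)$, both right-hand sides are continuous on all of $\R$ by Proposition \ref{prop:property-cndn}. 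The crucial point is the cancellation: although $a'(x)$ blows up at $Z_{p,q}$ when $p>2$, the offending factor $|\cos a|^{-(1-\frac{2}{p})}$ is matched exactly by the powers of $|\cos a|$ produced upon differentiating $|\cos a|^\mu$, and again after the second differentiation it leaves the continuous factor $|\cos a|^{\frac{2}{p}}$ with $\frac{2}{p}>0$.

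I would then upgrade ``continuous extension of the derivative'' to genuine $C^2$-regularity by the standard mean value theorem argument: $W$ is continuous on $\R$, differentiable off the discrete set $Z_{p,q}$, and $W'$ has a finite limit at each point of $Z_{p,q}$, hence $W$ is differentiable everywhere with $W'$ equal to the continuous expression above; applying the same principle to $W'$ gives $W\in C^2(\R)$. For the $\dn_p$ statement with $q\in[0,1)$ there is nothing singular: $\dn_p(\cdot,q)$ is positive and analytic by Propositions \ref{prop:property-cndn} and \ref{thm:regularity_dn}, so $|\dn_p|^{p-2}\dn_p=\dn_p^{p-1}$ is analytic, a fortiori $C^2$. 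The case $q=1$ reduces to the $\cn_p$ analysis, since $\Fcn(\cdot,1)=\Fdn(\cdot,1)$ forces $\amdn(\cdot,1)=\amcn(\cdot,1)$, whence $\dn_p(\cdot,1)=|\cos a|^{\frac{2}{p}}$ and $|\dn_p(\cdot,1)|^{p-2}\dn_p(\cdot,1)=|\cos a|^{\mu}$, which is handled by the identical differentiation-and-cancellation scheme.

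The main obstacle is the exponent bookkeeping in the second paragraph: one must check that the negative power of $|\cos a|$ from $a'(x)$ is absorbed not only in $W'$ but also after the second differentiation, where differentiating $\sin a\sqrt{1-q^2\sin^2 a}$ reintroduces a factor $1/\sqrt{1-q^2\sin^2 a}$ (cancelled by $a'$) and a factor $\cos a$ (which combines with $|\cos a|^{-(1-\frac{2}{p})}$ into the continuous $|\cos a|^{\frac{2}{p}}$), so that $W''$ is genuinely continuous rather than merely bounded. This is the step where, for $p>2$, $C^2$-regularity could a priori fail, so it deserves the most care; once the powers are tracked the remaining argument is routine.
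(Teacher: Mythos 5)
Your proposal is correct and takes essentially the same route as the paper: both compute the first and second derivatives of $|\cn_p|^{p-2}\cn_p$ explicitly via $a=\amcn(\cdot,q)$, observe that the powers of $|\cos a|$ coming from $a'(x)$ cancel exactly to leave continuous expressions ($-\mu\sn_p\sqrt{1-q^2\sn_p^2}$ and $-\mu\cn_p(1-2q^2\sn_p^2)$), and handle $\dn_p$ by analyticity for $q<1$ and by the same explicit computation (using $\amdn(\cdot,1)=\amcn(\cdot,1)$) for $q=1$. Your added mean-value-theorem step to pass from ``the derivative computed off $Z_{p,q}$ extends continuously'' to genuine differentiability on all of $\R$ is a correct and welcome piece of rigor that the paper leaves implicit.
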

\begin{proof}
Let $q\in[0,1]$.
We compute
\begin{align}\label{transformed_cn_1bibun}
\begin{split}
& \hspace{-30pt}\frac{d}{dx} \Big( | \cn_p(x,q) |^{p-2}\cn_p(x,q) \Big) \\
&= - \big(2-\tfrac{2}{p} \big) \sin \amcn(x,q) \sqrt{1-q^2\sin^2 \amcn (x,q)}.
\end{split}
\end{align}
This implies that $ | \cn_p(\cdot ,q) |^{p-2} \cn_p(\cdot ,q) \in C^1(\R)$.
We further compute 
\begin{align} \label{eq:1203-88}
\begin{split}
&\frac{d^2}{dx^2} \Big( | \cn_p(x,q) |^{p-2}\cn_p(x,q) \Big) \\
=& - \big(2-\tfrac{2}{p} \big)  |\cos \amcn (x,q)|^{\frac{2}{p}-1} 
\cos \amcn(x,q)  \Big(1- 2q^2\sin^2 \amcn (x,q)\Big).
\end{split}
\end{align}
This is continuous, 
so $ | \cn_p(\cdot ,q) |^{p-2} \cn_p(\cdot ,q) \in C^2(\R)$.

Let us turn to $\dn_p(\cdot, q)$.
If $q\in[0,1)$, then $\dn_p(\cdot, q)$ is positive and analytic on $\R$, so clearly $|\dn_p(\cdot, q)|^{p-2}\dn_p(\cdot, q)$ is smooth.
In addition, for $q\in[0,1]$, noting that $\dn_p(x,q)=(\amdn(x,q))'$, we compute 
\begin{align}
&\frac{d}{dx} \Big( | \dn_p(x,q) |^{p-2}\dn_p(x,q) \Big) = -q^2(2-\tfrac{2}{p})\cos\amdn(x,q)\sin\amdn(x,q), \notag \\
&\frac{d^2}{dx^2} \Big( | \dn_p(x,q) |^{p-2}\dn_p(x,q) \Big) \label{eq:2nd-diff-dn_p}\\
&\qquad= -q^2(2-\tfrac{2}{p})\big( 1-2\sin^2\amdn(x,q) \big) \big(1-q^2\sin^2\amdn(x,q) \big)^{\frac{1}{p}}, \notag
\end{align}
which in particular implies that $ | \dn_p(\cdot ,1) |^{p-2} \dn_p(\cdot ,1) \in C^2(\R)$.
\end{proof}

Now we verify that, although $\cn_p$ and $\dn_p$ were defined independently, both $\cn_p$ and $\dn_p$ satisfy the same differential equation up to suitable choices of coefficients.

\begin{proposition} \label{prop:cndn}
Let $A, \beta\in \R$, $\alpha \geq0$, and $q\in[0,1]$. 
Given $L>0$, the following statements hold.
\begin{itemize}
\item[(i)] If $A^2=4\alpha^2q^2$ and $\lambda=2(p-1)|A|^{p-2}\alpha^2(2q^2-1)$, then
\[ k(s)=A\cn_p(\alpha s +\beta, q), \quad s\in [0,L],
\] 
is a solution of \eqref{eq:EL} with the above $\lambda$ in the 
strong sense.
\item[(ii)] If 
$A^2=4\alpha^2$ and $\lambda=2(p-1)|A|^{p-2}\alpha^2(2-q^2)$, then 
\[
k(s)=A\dn_p(\alpha s +\beta, q), \quad s\in [0,L],
\]
is a solution of \eqref{eq:EL} with the above $\lambda$ in the 
strong sense.
\end{itemize}
Here the strong sense means that $p(|k|^{p-2}k )''+ (p-1)|k|^p k -\lambda k=0$ holds pointwise.
\end{proposition}
\begin{proof}

By Lemma~\ref{lem:classical-sense} we have $|k|^{p-2}k\in C^2([0,L])$ in the both cases.
Then the strong form $p(|k|^{p-2}k )''+ (p-1)|k|^p k -\lambda k=0$ follows just by direct computations using \eqref{eq:1203-88} and \eqref{eq:2nd-diff-dn_p}, so we only briefly demonstrate case (i).
If $k(s)=A\cn_p(\alpha s +\beta, q)$, then we compute
\begin{align*}
& p\big(|k|^{p-2}k \big)''+ (p-1)|k|^p k -\lambda k \\
=\,& A\cn_p\Big( \big( (p-1)|A|^{p-2}(A^2-2\alpha^2)-\lambda \big) + (p-1)A^{p-2} \big(4\alpha^2 q^2 -A^2 \big)\sn_p^2 \Big),
\end{align*}
which vanishes if $A^2=4\alpha^2q^2$ and $\lambda=2(p-1)|A|^{p-2}\alpha^2(2q^2-1)$.
\end{proof}

In fact, we will prove that the above solutions almost exhaust all the possibilities.
However, if $p>2$, then the degeneracy of the equation yields more possibilities.
We finally discuss this peculiar fact by showing that a continuous family of ``nonunique'' solutions emerge in the flat-core case (corresponding to $\sech_p$ with $p>2$).

To this end we first precisely recall the corresponding solution in the previous theorem.
Let $\lambda>0$.
Then the solution obtained in Proposition~\ref{prop:cndn} with $q=1$ (thus in both cases (i) and (ii)) and $\beta=0$ is represented by
\begin{align*} 
k(s)=2A_{p, \lambda} \sech_p \big( A_{p, \lambda}\, s \big)\quad \text{or}\quad 
k(s)=-2A_{p, \lambda} \sech_p \big( A_{p, \lambda}\, s \big)
\end{align*}
at least if $s\in(-\K_p(1), \K_p(1))$, where
\begin{align} \label{eq:1105-1}
A_{p, \lambda} :=\frac{1}{2}\left( \frac{2\lambda}{p-1}\right)^{\frac{1}{p}}.
\end{align}
Note that the above $k$ satisfies 
\begin{align*}
\begin{cases}
k(s) \neq 0 \quad \text{for any} \ \ s\in \R \quad &\text{if} \quad 1<p\leq2, \\
\Set{ k(s)\neq0 } = (-T_{p,\lambda}, T_{p,\lambda}) \subset \R &\text{if} \quad p>2,
\end{cases}
\end{align*}
where $T_{p,\lambda}$ is the constant defined with $p>2$ by
\begin{align}\notag
T_{p,\lambda}:=\frac{\K_p(1)}{A_{p, \lambda}}.
\end{align}
If $p>2$, then the above curvature function may be regarded as a (bounded) ``single mountain'', which corresponds to a ``single loop'' in terms of the corresponding curve.
Proposition~\ref{prop:cndn} with $q=1$ already verifies that both the antiperiodic and periodic extension of the mountain solve \eqref{eq:EL} indeed.

We now observe that in fact any sum of ``disjoint mountains'', possibly with flat parts in the middle, is again a solution.
We first define such disjoint mountains rigorously.

\begin{definition} \label{def:flat-core}
Let $p>2$.
We say that a (curvature) function $k:[0,L] \to \R$ is of \emph{flat-core type} if there exist $\lambda>0$, $N \in \N$, $\{\sigma_i\}_{i=1}^N \subset\{-1, 1\}$, and $\{s_i\}_{i=1}^N\subset(-T_{p,\lambda}, L+T_{p,\lambda})$ such that $s_{i+1} \geq s_i+2T_{p,\lambda}$ ($i=1, \ldots, N-1$) and 
\begin{align}\label{eq:1118-1}
 k(s)=\sum_{i=1}^N \sigma_i  2A_{p, \lambda} \sech_p \big( A_{p, \lambda}\, (s-s_i) \big). 
\end{align}
\end{definition}

\begin{remark}
Notice that $k \not\equiv 0$ in $[0,L]$ since $-T_{p,\lambda}< s_1\leq s_N< L+T_{p,\lambda}$.
In addition, the relation $s_{i+1} \geq s_i+2T_{p,\lambda}$ ensures that the nonzero parts
$$J_i:= \Set{ s\in\R | \sigma_i  2A_{p, \lambda} \sech_p \big( A_{p, \lambda}\, (s-s_i) \big) \neq0 }$$
are mutually disjoint for $i=1,\dots,N$.
The constant $A_{p, \lambda}$ plays the role of a scaling factor, that is, the shape of the graph of ``one mountain'' coincides with that of $2\sech_p$ up to similarity.
We have introduced the notion of flat-core type in order to characterize flat-core $p$-elasticae.
\end{remark}

Let us show that function \eqref{eq:1118-1} is indeed a solution of \eqref{eq:EL}:

\begin{proposition} \label{prop:sech-EL}
Let $p>2$ and $k:[0,L]\to\R$ be of flat-core type.
Then $k$ is a solution of \eqref{eq:EL}.
\end{proposition}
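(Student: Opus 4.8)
The plan is to upgrade the weak formulation \eqref{eq:EL} to a pointwise identity and then exploit the disjointness of the mountains. Writing $w:=|k|^{p-2}k$, I claim it suffices to prove that $w\in C^2(\R)$ and that the strong equation
\[
p\,w''+(p-1)|k|^pk-\lambda k=0
\]
holds pointwise on all of $[0,L]$. Indeed, once this is known, multiplying by an arbitrary $\vp\in C^\infty_{\rm c}(0,L)$, integrating over $[0,L]$, and integrating the $w''$-term by parts twice (the boundary terms vanishing since $\vp$ has compact support) immediately yields \eqref{eq:EL}.

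First I would reduce everything to a single mountain. By Definition \ref{def:flat-core} the spacing condition $s_{i+1}\geq s_i+2T_{p,\lambda}$ makes the nonzero sets $J_i=(s_i-T_{p,\lambda},s_i+T_{p,\lambda})$ mutually disjoint, so near any point of $[0,L]$ the function $k$ in \eqref{eq:1118-1} either vanishes identically or coincides with exactly one signed, scaled, and translated copy $\sigma_i 2A_{p,\lambda}\sech_p(A_{p,\lambda}(\cdot-s_i))$. Consequently $w$ is, locally, either $0$ or $\pm$ such a copy of $w_0:=|\sech_p|^{p-2}\sech_p$ (the transform $|\cdot|^{p-2}(\cdot)$ commuting with sign, scaling, and translation), so the $C^2$-regularity of $w$ on $\R$ follows from that of $w_0$.

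The heart of the matter, and the step I expect to be the main obstacle, is the $C^2$-regularity of $w_0$ at the transition points $\pm\K_p(1)$, where a single bump meets the flat region; here $\sech_p$ is merely continuous, and the nonlinear transform must regularize this. Away from these points $w_0$ is analytic, being either $0$ or a composition of analytic functions on $(-\K_p(1),\K_p(1))$. At the endpoints, Definition \ref{def:sech} gives $\sech_p=\dn_p(\cdot,1)$ on $(-\K_p(1),\K_p(1))$, so there $w_0=|\dn_p(\cdot,1)|^{p-2}\dn_p(\cdot,1)$, which by Lemma \ref{lem:classical-sense} extends to a $C^2$ function on $\R$. The computation recorded in the proof of that lemma gives
\[
\frac{d}{dx}\big(|\dn_p(\cdot,1)|^{p-2}\dn_p(\cdot,1)\big)=\cos\amdn(x,1)\sin\amdn(x,1),
\]
\[
\frac{d^2}{dx^2}\big(|\dn_p(\cdot,1)|^{p-2}\dn_p(\cdot,1)\big)=\big(2\cos^2\amdn(x,1)-1\big)|\cos\amdn(x,1)|^{\frac2p}.
\]
Since $\amdn(\K_p(1),1)=\tfrac{\pi}{2}$ forces $\cos\amdn(\K_p(1),1)=0$, both displayed derivatives, as well as the value $|\dn_p|^{p-2}\dn_p$ itself, vanish at $x=\K_p(1)$ (and likewise at $-\K_p(1)$ by evenness). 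Hence the value and the first two derivatives of $w_0$ match those of the identically zero function at the endpoints, which is exactly what is needed for the zero-extension $w_0$ to be of class $C^2(\R)$.

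It then remains to verify the pointwise equation. On each $J_i$, the relevant copy of $k$ equals $\sigma_i 2A_{p,\lambda}\sech_p(A_{p,\lambda}(\cdot-s_i))$, which by Theorem \ref{thm:cndn} with $q=1$ (together with the choice \eqref{eq:1105-1} of $A_{p,\lambda}$, and using that the equation is odd in $k$ so that the sign $\sigma_i$ is absorbed) solves $p\,w''+(p-1)|k|^pk-\lambda k=0$ pointwise. On the flat parts all three terms vanish, so the identity holds trivially. Finally, since $w\in C^2(\R)$ by the previous step, $w''$ is continuous, so the identity extends from $\R\setminus\{s_i\pm T_{p,\lambda}\}$ to the transition points as well. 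Combining this with the reduction in the first paragraph completes the proof.
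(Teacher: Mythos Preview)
Your argument is correct and close in spirit to the paper's, but the organization differs in a way worth noting. The paper decomposes the integral in \eqref{eq:EL} into the $N$ disjoint mountain intervals, applies the strong equation from Theorem~\ref{thm:cndn} on each, integrates by parts twice, and then directly verifies that the boundary terms $p|k|^{p-2}k\,\vp'-p(|k|^{p-2}k)'\vp$ vanish at $s_i\pm T_{p,\lambda}$ by computing that both $|k|^{p-2}k$ and $(|k|^{p-2}k)'$ are zero there. You instead prove the stronger fact that $w=|k|^{p-2}k\in C^2(\R)$ globally (which requires one extra check, the vanishing of the second derivative at the transition points), after which the strong equation holds everywhere by continuity and a single integration by parts with $\vp\in C^\infty_{\rm c}(0,L)$ needs no boundary analysis at all. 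Your route is slightly cleaner and in fact anticipates Lemma~\ref{lem:0812}, where the paper establishes $w\in C^2$ for \emph{any} solution of \eqref{eq:EL}; the paper's route is more direct but requires tracking boundary contributions by hand. The underlying computation---vanishing of $w$ and $w'$ (and, for you, $w''$) at $\pm\K_p(1)$ via $\amdn(\pm\K_p(1),1)=\pm\tfrac{\pi}{2}$---is the same in both.
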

\begin{proof}
It is sufficient to prove that for any given $k:[0,L]\to\R$ of the form \eqref{eq:1118-1} we first canonically extend the domain of $k$ to $\R$ (but the support is still bounded) and argue that equation \eqref{eq:EL} holds for any given $\varphi \in C^{\infty}_{\rm c}(\R)$.
By only looking at the nonzero part of $k$ and accordingly decomposing the domain of the left-hand side of \eqref{eq:EL} into $N$ disjoint intervals, it suffices to check that for each $i=1,\dots,N$,
\begin{align}\label{eq:1221-1}
\int_{s_i-T_{p, \lambda}}^{s_i+T_{p,\lambda}} \Big( p \big(  |k|^{p-2}k \big) \varphi'' +(p-1) |k|^pk\varphi  -\lm k\varphi \Big)\,ds = 0.
\end{align}
Proposition~\ref{prop:cndn} with $q=1$ implies that $2A_{p, \lambda}\sech_p A_{p,\lambda}(\cdot -s_i)$ satisfies \eqref{eq:EL} on $(s_i-T_{p, \lambda},s_i+T_{p, \lambda})$ in the strong sense.
This fact with integration by parts twice for \eqref{eq:1221-1} implies that it is now sufficient to show
\begin{align}\label{eq:1221-2}
\Big[ p|k(s)|^{p-2}k(s) \varphi'(s)
-p\big(|k(s)|^{p-2}k(s) \big)'\varphi(s)
\Big]_{s=s_i-T_{p, \lambda}}^{s=s_i+T_{p, \lambda}} =0.
\end{align}
Since $|k(s_i\pm T_{p, \lambda})|^{p-2}k(s_i\pm T_{p, \lambda})=\sigma_i \sech_p (\pm\K_p(1) )^{p-1} =0$, the first term in \eqref{eq:1221-2} vanishes.
For the second term, inserting $q=1$ into \eqref{transformed_cn_1bibun}, we have
\begin{align} \notag
\frac{d}{dx}\big(|\sech_p x|^{p-2}\sech_p x \big) =-(2-\tfrac{2}{p})\sin\amcn(x,1)|\cos\amcn(x,1)|,
\end{align}
from which we deduce that  
\begin{align*}
&\frac{d}{ds}\big(|k(s)|^{p-2}k(s) \big)\Big|_{s=s_i\pm T_{p, \lambda}} \\
&= -\sigma_i(2-\tfrac{2}{p}) 2^{p-1}A_{p, \lambda}^p \sin\amcn(\pm\K_p(1),1)|\cos\amcn(\pm\K_p(1),1)| =0.
\end{align*}
Hence \eqref{eq:1221-2} holds true.
The proof is now complete.
\end{proof}

\section{Classification, representation, and regularity of $p$-elasticae}\label{sect:mainthm}

By Proposition~\ref{prop:p-ELk}, any $p$-elastica needs to have curvature satisfying \eqref{eq:EL}.
In Propositions~\ref{prop:cndn} and \ref{prop:sech-EL},
we have exhibited examples of solutions to \eqref{eq:EL} with our $p$-elliptic functions. 
In this section, conversely, we shall show that \eqref{eq:EL} has no other solutions, and accordingly we prove Theorems~\ref{thm:Formulae_1}, \ref{thm:Formulae_2}, \ref{thm:Optimal_regularity_1}, and \ref{thm:Optimal_regularity_2}.

To this end, we consider the Cauchy problem of \eqref{eq:EL}
\begin{align} \label{eq:CEL} \tag{CEL}
\begin{cases}
p\big( |k|^{p-2}k \big)'' + (p-1) |k|^pk - \lambda k=0, \\
|k(0)|^{p-2}k(0)=w_0, \quad \frac{d}{ds}\big|_{s=0}\big(|k(s)|^{p-2}k(s)\big)=\dot{w}_0,
\end{cases}
\end{align}
where $w_0, \dot{w}_0\in \R$.
As in \eqref{eq:EL}, the first equation is first understood in a weak sense but, by Lemma~\ref{lem:0812} below, it will turn out that any solution $k \in L^\infty(0,L)$ to \eqref{eq:EL} (cf.\ Proposition \ref{prop:p-ELk}) satisfies $|k|^{p-2}k \in C^2([0,L])$.
Therefore, problem \eqref{eq:CEL} makes sense (see also Definition~\ref{def:CEL}).
The aim of this section is to prove the following complete classification in terms of the initial values $w_0, \dot{w}_0\in \R$:

\begin{theorem}
\label{thm:k-classify}
Let $k:[0,L]\to\R$ be a solution of \eqref{eq:CEL} with some $\lambda \in \R$ in the sense of Definition \ref{def:CEL}.
Then $k$ is given by either of the following cases:
\begin{itemize}
\item[(I)] $k(s) \equiv 0$. 
\item[(II)] $k(s)=A\cn_p(\alpha s +\beta, q)$ for some constants $A, \beta\in \R$, $\alpha \geq0$, and $q\in(0,1)$ such that 
$A^2=4\alpha^2q^2$, $\lambda=2(p-1)|A|^{p-2}\alpha^2(2q^2-1)$, 
$A\cn_p(\beta, q)=|w_0|^{\frac{2-p}{p-1}}w_0$, and  $|A|^{p-2}A\alpha(p-1)|\cn_p(\beta, q)|^{p-2} \cn'_p(\beta, q)=|\dot{w}_0|^{\frac{2-p}{p-1}}\dot{w}_0$.
\item[(III)] {\rm ($1<p \leq 2$)} $k(s)=2\sgn (w_0)A_{p, \lambda}\sech_p(A_{p, \lambda}s +\beta)$ for $A_{p, \lambda}$ given by \eqref{eq:1105-1} and some $\beta\in\R$ such that
$2A_{p, \lambda}\sech_p\beta=|w_0|$.
\item[(III')] {\rm ($p> 2$)}  $k$ is of flat-core type, cf.\ Definition \ref{def:flat-core}.
\item[(IV)] $k(s)=A\dn_p(\alpha s +\beta, q)$ for some constants $A, \beta\in \R$, $\alpha \geq0$, and $q\in(0,1)$ such that 
$A^2=4\alpha^2$, $\lambda=2(p-1)|A|^{p-2}\alpha^2(2-q^2)$, 
$A\dn_p(\beta, q)=|w_0|^{\frac{2-p}{p-1}}w_0$, and  $|A|^{p-2}A\alpha(p-1)\dn_p(\beta, q)^{p-2} \dn'_p(\beta, q)=|\dot{w}_0|^{\frac{2-p}{p-1}}\dot{w}_0$.
\item[(V)] $k(s) \equiv k_0$ for some constant $k_0 \neq 0$.
\end{itemize}
Furthermore, let 
$D_0=D_0(w_0, \dot{w}_0)$ be given by 
\begin{align}\label{eq:def-D_0}
D_0:=p^2 \dot{w}_0^2 + F(w_0),\ \text{where}\ 
F(x):= (p-1)^2 |x|^{\frac{2p}{p-1}} - 2\lambda (p-1) |x|^{\frac{p}{p-1}}.
\end{align}
Then each of the above cases (I)--(V) is attained if and only if the corresponding one of the following mutually exclusive conditions (i)--(v) holds:
\begin{itemize}
\item[(i)] $(1<p\leq 2)$ $w_0 = \dot{w}_0 =0$; $(p>2)$ $k \equiv0$.
\item[(ii)]  $D_0>0$.
\item[(iii)] $(1<p\leq 2)$ $D_0=0$ and $(w_0, \dot{w}_0) \neq (0, 0)$.
\item[(iii')] $(p> 2)$ $D_0=0$ and $k \not\equiv 0$.
\item[(iv)] $\min F<D_0<0$.
\item[(v)]  $D_0=\min F<0$.
\end{itemize}

\end{theorem}

\begin{remark}
The last conditions are useful for distinguishing the cases in terms of the initial data $w_0$ and $\dot{w}_0$.
Indeed, if $p\leq 2$, then all the conditions are described only by the initial data (and the given power and multiplier).
On the other hand, if $p>2$, then both Cases (i) and (iii') allow the choice of $w_0=\dot{w}_0=0$, so we need to use the function $k$ itself to make the conditions mutually exclusive.
\end{remark}

The rest of this section is devoted to the proof of the above classification theorem.
A key technique is to apply the transformation used by Shioji--Watanabe \cite{SW20}, which regards $|k|^{p-2}k$ as an unknown function, in order to transform our possibly degenerate or singular ODE to a semilinear ODE.
The semilinearized equation can always be understood in the classical sense, so that standard uniqueness theory is applicable to the corresponding initial value problem in many cases; in some case we have no general uniqueness, but can prove that the uniqueness-breaking only occurs when $k=0$ and hence the possible configurations have certain rigidity.

\subsection{Semilinearized Euler--Lagrange equation}

Let us introduce the transformation used in \cite{SW20}.
Set 
\[ \Phi (x) := |x|^{\frac{2-p}{p-1}}x, \quad x\in \R. \]
The function $\Phi$ is monotonically increasing and its inverse is $\Phi^{-1}(x)=|x|^{p-2}x$.
When we set $w(s):=|k(s)|^{p-2}k(s)$, then  
\begin{align*} 
    k(s)=|w(s)|^{\frac{2-p}{p-1}}w(s) = \Phi(w(s))
\end{align*}
and \eqref{eq:EL} is transformed to  
\begin{align} \label{eq:0510-1}
\int_0^L \! \Big(p  w \vp'' + (p-1)|w|^{\frac{2}{p-1}}w \vp - \lm |w|^{\frac{2-p}{p-1}}w \vp \Big) ds = 0 \ \  \text{for any} \  \varphi \in C^{\infty}_{\rm c}(0,L).
\end{align}
Note that $w=|k|^{p-2}k \in L^\infty(0,L)$ if $k \in L^\infty(0,L)$.
In this form any weak solution is always a classical solution thanks to the semilinear structure.

\begin{lemma} \label{lem:0812}
If $w\in L^\infty(0,L)$ satisfies \eqref{eq:0510-1} for any $\varphi \in C^{\infty}_{\rm c}(0,L)$, then $w\in C^2([0,L])$ and $w$ also satisfies 
\begin{align}  \label{eq:0610-1}
p w''(s) + (p-1)|w(s)|^{\frac{2}{p-1}}w(s) - \lm |w(s)|^{\frac{2-p}{p-1}}w(s) =0 \quad \text{in} \ [0,L].
\end{align}
\end{lemma}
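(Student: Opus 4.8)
The plan is to read the weak identity as a distributional second-order ODE with bounded right-hand side and then run a two-step bootstrap. Set
\[
h(s):=(p-1)|w(s)|^{\frac{2}{p-1}}w(s)-\lm|w(s)|^{\frac{2-p}{p-1}}w(s).
\]
The crucial preliminary observation is that, although the exponent $\frac{2-p}{p-1}$ is negative when $p>2$, the full maps $x\mapsto|x|^{\frac{2}{p-1}}x=\sgn(x)|x|^{\frac{p+1}{p-1}}$ and $x\mapsto|x|^{\frac{2-p}{p-1}}x=\sgn(x)|x|^{\frac{1}{p-1}}$ carry the strictly positive exponents $\frac{p+1}{p-1}$ and $\frac{1}{p-1}$; hence both are continuous on $\R$ (indeed Hölder continuous) and vanish at the origin. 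In particular the map $g:x\mapsto(p-1)|x|^{\frac{2}{p-1}}x-\lm|x|^{\frac{2-p}{p-1}}x$ is continuous and bounded on bounded sets, so $h=g(w)\in L^\infty(0,L)$ because $w\in L^\infty(0,L)$. This removes any apparent singularity and is precisely what makes the semilinear form tractable.

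With $h\in L^\infty$ fixed, hypothesis \eqref{eq:0510-1} reads exactly $\int_0^L p\,w\,\vp''\,ds=-\int_0^L h\,\vp\,ds$ for all test functions, that is, $p\,w''=-h$ in $\mathcal{D}'(0,L)$. Since $-h/p\in L^\infty\subset L^1(0,L)$, the standard one-dimensional fact that a distribution with $L^1_{\rm loc}$ second derivative admits the representation $w(s)=-\frac1p\int_0^s\!\!\int_0^t h(r)\,dr\,dt+as+b$ for suitable constants $a,b$ shows that $w$ agrees a.e.\ with a function whose derivative $w'(s)=-\frac1p\int_0^s h\,dr+a$ is Lipschitz. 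Thus $w\in W^{2,\infty}(0,L)\hookrightarrow C^{1}([0,L])$.

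The second bootstrap is then immediate: $w$ being continuous and $g$ being continuous, the composition $h=g(w)$ is continuous on $[0,L]$, whence $w''=-h/p\in C^0([0,L])$ and therefore $w\in C^2([0,L])$. Finally, since $p\,w''+h$ vanishes as a distribution while all its terms are now continuous functions, it vanishes pointwise, which is exactly \eqref{eq:0610-1}. The only genuine subtlety, and the step deserving care, is the preliminary continuity claim for the nonlinear terms at $w=0$ when $p>2$: one must recombine $|w|^{\frac{2-p}{p-1}}w$ into the single power $\sgn(w)|w|^{\frac{1}{p-1}}$ before invoking continuity, since the two factors taken separately are singular there. Everything else is routine elliptic regularity on an interval.
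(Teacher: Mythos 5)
Your proof is correct. The overall strategy coincides with the paper's: both arguments bootstrap from $w\in L^\infty$ to $w\in W^{2,\infty}$ and then, using continuity of the nonlinearity, to $w\in C^2$ with the pointwise equation. The difference lies in how the intermediate step is executed. The paper constructs, for each $\eta\in C^\infty_{\rm c}(0,L)$, an explicit test function $\varphi$ (an antiderivative of $\eta$ corrected by a cubic so that $\varphi\in W^{2,2}_0$) and derives the duality estimates $|\int_0^L w\eta'\,ds|\leq C\|\eta\|_{L^1}$ and then $|\int_0^L w'\varphi'\,ds|\leq C\|\varphi\|_{L^1}$, reading off $w\in W^{1,\infty}$ and $w\in W^{2,\infty}$ from the characterization of these spaces by duality against $L^1$. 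You instead read \eqref{eq:0510-1} directly as the distributional identity $pw''=-h$ with $h=g(w)\in L^\infty$ and invoke the structure theorem (du Bois-Reymond applied twice) to represent $w$ as a double antiderivative plus an affine function; this is shorter and avoids the test-function construction. Your route also has the merit of isolating explicitly the one genuinely delicate point, namely that for $p>2$ the term $|x|^{\frac{2-p}{p-1}}x=\sgn(x)|x|^{\frac{1}{p-1}}$ must be read as a single positive power to see continuity at $x=0$; the paper uses this fact but does not comment on it. Both arguments are complete; yours is marginally more economical, while the paper's duality computation mirrors the technique already used in its Proposition \ref{prop:p-ELk} and so keeps the exposition uniform.
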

\begin{proof}
Fix $\eta \in C^{\infty}_{\mathrm c}(0,L)$ arbitrarily and again as in \cite{DDG} 
set
\[
\varphi(s) := \int_0^s \eta(t)\,dt + \Big( \int_0^L \eta(s)\,ds \Big) \bigg(-\frac{3}{L^2}s^2 + \frac{2}{L^3}s^3 \bigg), \quad s\in [0,L].
\]
Then $\varphi \in W^{2,2}_0(0,L)$ and $\|\varphi \|_{C^0} \leq C \| \eta \|_{L^1}$, 
$\|\varphi' \|_{L^1} \leq C \| \eta \|_{L^1}$ for some constant $C>0$ depending only on $L$. 
Applying the above $\varphi$ to \eqref{eq:0510-1}, we have 
\begin{align*}
\left| p\int_0^L w \eta' \,ds \right|
&\leq p\| \eta \|_{L^1}\| w \|_{L^\infty} \Big( \int_0^L \Big| -\frac{6}{L^2}+\frac{12}{L^3}s \Big| \, ds \Big)  \\
&\qquad +(p-1)\| w \|_{L^{\infty}}^{\frac{p+1}{p-1}} \int_0^L | \varphi| \, ds
+ \lambda  \| w \|_{L^{\infty}}^{\frac{1}{p-1}} \int_0^L | \varphi| \, ds  
\leq C' \| \eta \|_{L^1},
\end{align*}
where $C'$ is a positive constant independent of $\varphi$.
Thus we obtain $w\in W^{1, \infty}(0,L)$ and it follows from \eqref{eq:0510-1} that for any $\varphi \in C^{\infty}_{\rm c}(0,L)$
\begin{align*}
\left| p\int_0^L  w' \varphi' \,ds \right|
&\leq (p-1)\| w \|_{L^{\infty}}^{\frac{p+1}{p-1}} \int_0^L | \varphi| \, ds
+ \lambda  \| w \|_{L^{\infty}}^{\frac{1}{p-1}} \int_0^L | \varphi| \, ds  
\leq C'' \| \varphi \|_{L^1},
\end{align*}
where $C'':=(p-1)\| w \|_{L^{\infty}}^{\frac{p+1}{p-1}}+ \lambda  \| w \|_{L^{\infty}}^{\frac{1}{p-1}}$.
Therefore, $w\in W^{2, \infty}(0,L)$, and by \eqref{eq:0510-1},
\[
w''(s) = -\frac{(p-1)}{p}|w(s)|^{\frac{2}{p-1}}w(s) + \frac{\lm}{p} |w(s)|^{\frac{2-p}{p-1}}w(s)  \quad \text{a.e. } s\in(0,L).
\]
Since the function in the right-hand side belongs to $C([0,L])$, so does $w''$, 
and hence we obtain $w \in C^2([0,L])$.
Hence, $w$ satisfies \eqref{eq:0610-1} in the classical sense.
\end{proof}

\begin{definition} \label{def:CEL}
We say that $k$ is a solution of \eqref{eq:CEL} if $k\in L^{\infty}(0,L)$ and if the function $w:=|k|^{p-2}k \in L^{\infty}(0,L)$ satisfies \eqref{eq:0510-1} for some $\lambda\in \R$ (and hence $w\in C^2([0,L])$ by Lemma~\ref{lem:0812}), $w(0)=w_0$, and  $w'(0)=w_0$.
\end{definition}

In what follows we investigate properties of $C^2$-solutions to \eqref{eq:0510-1}.
Multiplying $2p w'(s)$ to \eqref{eq:0610-1} and integrating it on $(0,s)$, we obtain the conservation law
\begin{align} \label{eq:0610-4}
p^2 w'(s)^2 + F(w(s)) = p^2 w'(0)^2 + F(w(0)).
\end{align}
For later use we exhibit some elementary properties of $F$ without proof:
\begin{lemma} \label{lem:0612-1}
Let $F$ be the function defined by \eqref{eq:def-D_0}.
Then $F$ is even and $F(x) \to \infty$ as $|x| \to \infty$.
In addition the following statements hold:
\begin{itemize}
\item[(i)] In the case $\lambda>0$, the function $F$ has exactly three local extrema at the points
$0,\pm(\frac{\lambda}{p-1})^{\frac{p-1}{p}}.$
More precisely, $F(0)=0$ is a unique local maximum and $F(\pm (\frac{\lambda}{p-1})^{\frac{p-1}{p}}) <0$ are the global minimum.
Furthermore, the equation $F(x)=0$ has exactly three solutions $x=0,\pm c$, where
\begin{align} \label{eq:0925-3}
c :=\Big(\frac{2\lambda}{p-1} \Big)^{\frac{p-1}{p}}=\big(2 A_{p, \lambda} \big)^{p-1}>0.
\end{align}
\item[(ii)] In the case $\lambda \leq 0$, the function $F$ is convex and hence has only one extremum as the global minimum $F(0)=0$.
\end{itemize}
\end{lemma}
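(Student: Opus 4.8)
The plan is to reduce the analysis of $F$ to that of a quadratic polynomial through a monotone change of variable. Writing $a:=\tfrac{p}{p-1}$, which satisfies $a>1$ since $p\in(1,\infty)$, we have
\[
F(x) = (p-1)^2|x|^{2a} - 2\lambda(p-1)|x|^{a}.
\]
Evenness is immediate, and since the leading coefficient $(p-1)^2$ is positive with $2a>0$, the dominant term $(p-1)^2|x|^{2a}$ forces $F(x)\to\infty$ as $|x|\to\infty$. For the finer structure I would substitute $t:=|x|^{a}\geq0$ and set $G(t):=(p-1)^2t^2-2\lambda(p-1)t=(p-1)t\big((p-1)t-2\lambda\big)$, so that $F(x)=G(|x|^{a})$. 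The key feature is that $x\mapsto|x|^{a}$ is even, strictly increasing on $[0,\infty)$, and (precisely because $a>1$) of class $C^1$ on $\R$ with vanishing derivative at the origin; hence the critical points and extremal values of $F$ correspond to those of the parabola $G$ on $[0,\infty)$, together with the single point $x=0$.

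For the case $\lambda>0$ I would treat $G$ as an upward parabola with vertex at $t^*=\tfrac{\lambda}{p-1}>0$, where $G(t^*)<0$, and roots $t=0$ and $t=\tfrac{2\lambda}{p-1}$. Pulling back through $t=|x|^{a}$, the condition $|x|^{a}=t^*$ produces the two symmetric critical points $x=\pm\big(\tfrac{\lambda}{p-1}\big)^{\frac{p-1}{p}}$, each a global minimum with negative value, while the equation $F(x)=0$ becomes $|x|^{a}\in\{0,\tfrac{2\lambda}{p-1}\}$ and yields exactly $x=0,\pm c$ with $c=\big(\tfrac{2\lambda}{p-1}\big)^{\frac{p-1}{p}}$. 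The remaining critical point is the origin: using $a>1$ to get $F\in C^1$ with $F'(0)=0$, and the local expansion $F(x)\approx-2\lambda(p-1)|x|^{a}<0$ for small $x\neq0$, I would conclude that $x=0$ is a strict local maximum with $F(0)=0$. Counting then gives exactly three local extrema, as claimed.

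For $\lambda\leq0$ I would argue convexity directly rather than through $G$: both $|x|^{2a}$ and $|x|^{a}$ are convex on $\R$ because $2a,a\geq1$, and when $\lambda\leq0$ the two coefficients $(p-1)^2$ and $-2\lambda(p-1)$ are nonnegative, so $F$ is a nonnegative combination of convex functions, hence convex, with the unique minimum $F(0)=0$. The one genuinely delicate point throughout is the behavior at the origin: since $F$ is only $C^1$ there and fails to be $C^2$, a naive second-derivative test is unavailable, so the local maximum claim for $\lambda>0$ must be established via the local expansion, and one must likewise check that the non-smoothness creates no spurious critical point and does not obstruct the convexity argument for $\lambda\leq0$. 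This is the main obstacle, though it is resolved cleanly by the bound $a>1$.
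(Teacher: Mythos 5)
Your proof is correct. Note that the paper states this lemma explicitly ``without proof'' (it is regarded as elementary), so there is no argument in the text to compare against; your substitution $t=|x|^{p/(p-1)}$, which reduces $F$ to the parabola $G(t)=(p-1)t\bigl((p-1)t-2\lambda\bigr)$ on $[0,\infty)$, is a clean and complete way to supply the missing details, and you correctly isolate the only delicate point, namely that $F$ is $C^1$ but not $C^2$ at the origin, handling it via the exponent bound $\tfrac{p}{p-1}>1$ and the sign of the local expansion. The only item you do not address is the second equality in \eqref{eq:0925-3}, i.e.\ $c=(2A_{p,\lambda})^{p-1}$, but this is immediate from the definition $A_{p,\lambda}=\tfrac12(\tfrac{2\lambda}{p-1})^{1/p}$ in \eqref{eq:1105-1} and is a matter of bookkeeping rather than a gap.
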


\begin{lemma} \label{lem:double-ido}
Let $\lambda>0$ and $D\in\R$ satisfy $\min F < D<0$.
Then the equation $F(x)-D=0$ has exactly four solutions $x\in \R$. 
Moreover, if $\xi>0$ denotes the largest solution, then all the four solutions are given by 
\begin{align*} 
-\xi < - \left( \frac{2\lambda}{p-1}-\xi^{\frac{p}{p-1}} \right)^{\frac{p-1}{p}} < \left( \frac{2\lambda}{p-1}-\xi^{\frac{p}{p-1}} \right)^{\frac{p-1}{p}}<\xi.
\end{align*}
\end{lemma}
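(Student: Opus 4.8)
The plan is to reduce the equation $F(x)-D=0$ to a quadratic by exploiting that $F$ depends on $x$ only through the single nonnegative quantity $u:=|x|^{\frac{p}{p-1}}$. First I would substitute $u=|x|^{\frac{p}{p-1}}$, so that $|x|^{\frac{2p}{p-1}}=u^2$ and the equation becomes
$$
(p-1)^2 u^2 - 2\lambda(p-1)u - D = 0, \qquad u\geq 0.
$$
The map $x\mapsto u=|x|^{\frac{p}{p-1}}$ is even and two-to-one from $\R$ onto $[0,\infty)$ (with $x=0$ the unique preimage of $u=0$), and its inverse on each half-line is $|x|=u^{\frac{p-1}{p}}$. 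Hence counting solutions $x$ reduces to counting \emph{positive} roots $u$ of the quadratic and doubling by the sign of $x$.

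Next I would analyze the quadratic. Its roots are $u_\pm=\frac{\lambda\pm\sqrt{\lambda^2+D}}{p-1}$, and I would read off the three hypotheses one at a time via Vieta. The discriminant equals $4(p-1)^2(\lambda^2+D)$; recalling from Lemma~\ref{lem:0612-1} that the global minimum of $F$ is attained at $\pm(\frac{\lambda}{p-1})^{\frac{p-1}{p}}$, a direct evaluation gives $\min F=-\lambda^2$, so the hypothesis $\min F<D$ is exactly $D>-\lambda^2$ and makes the discriminant strictly positive (two distinct real roots). The product of the roots is $-D/(p-1)^2>0$ because $D<0$, and their sum is $\frac{2\lambda}{p-1}>0$ because $\lambda>0$; together these force $u_+>u_->0$. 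Thus each of the two positive roots contributes exactly two solutions $x=\pm u_\pm^{\frac{p-1}{p}}$, giving precisely four real solutions in total, which establishes the first assertion.

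Finally I would extract the explicit ordering. Since $t\mapsto t^{\frac{p-1}{p}}$ is increasing on $(0,\infty)$, the largest solution is $\xi=u_+^{\frac{p-1}{p}}$, i.e.\ $u_+=\xi^{\frac{p}{p-1}}$. To express the smaller positive root through $\xi$, I would invoke the Vieta relation $u_++u_-=\frac{2\lambda}{p-1}$, which yields $u_-=\frac{2\lambda}{p-1}-\xi^{\frac{p}{p-1}}$ and hence $u_-^{\frac{p-1}{p}}=\bigl(\frac{2\lambda}{p-1}-\xi^{\frac{p}{p-1}}\bigr)^{\frac{p-1}{p}}$. The four solutions are therefore $\pm\xi$ and $\pm\bigl(\frac{2\lambda}{p-1}-\xi^{\frac{p}{p-1}}\bigr)^{\frac{p-1}{p}}$, and the chain of strict inequalities in the statement follows from $0<u_-<u_+$ together with the monotonicity of $t\mapsto t^{\frac{p-1}{p}}$.

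This is in essence a bookkeeping computation once the substitution is in place, so I do not anticipate a genuine obstacle. The only points requiring care are pinning down $\min F=-\lambda^2$ and checking that each of the three conditions ($\lambda>0$, $\min F<D$, $D<0$) is used precisely to guarantee positivity of the sum, positivity of the discriminant, and positivity of the product respectively, and then translating the count and ordering of positive $u$-roots faithfully back to the $x$-solutions through the even, two-to-one map $x\mapsto|x|^{\frac{p}{p-1}}$.
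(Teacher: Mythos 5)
Your proof is correct and complete: the substitution $u=|x|^{\frac{p}{p-1}}$ reduces $F(x)=D$ to the quadratic $(p-1)^2u^2-2\lambda(p-1)u-D=0$, and your use of $\min F=-\lambda^2$, $D<0$, and $\lambda>0$ to force two distinct positive roots, followed by Vieta's relation $u_++u_-=\frac{2\lambda}{p-1}$ to express the smaller root through $\xi$, yields exactly the stated four solutions and their ordering. The paper states this lemma without proof as an elementary property of $F$, and your argument is precisely the standard computation one would supply.
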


\subsection{Classification} \label{subsect:classify}

In this subsection we classify all solutions to \eqref{eq:0610-1} in terms of given initial values $w_0, \dot{w}_0 \in \R$ in the following
\begin{align}
\begin{cases} \label{eq:0610-10}
p w''(s) + (p-1)|w(s)|^{\frac{2}{p-1}}w(s) - \lm |w(s)|^{\frac{2-p}{p-1}}w(s) =0 \quad \text{in} \ (0,L) \\
w(0) = w_0, \quad w'(0)=\dot{w}_0. 
\end{cases}
\end{align}
Recall that the energy level of the Hamiltonian is denoted by $D_0:=  p^2 \dot{w}_0^2 + F(w_0)$.
First, we consider which condition for $p\in(1,\infty)$ and $D_0$ ensures the uniqueness of the problem \eqref{eq:0610-10}.
The uniqueness is nontrivial since equation \eqref{eq:0610-1} can be written by using the derivative $\dot{F}$ of $F$ in form of
$2p^2w''(s) + \dot{F}(w(s))=0,$
and 
\begin{align}\label{eq:211101-1}
\dot{F}(x)= 2p(p-1) |x|^{\frac{2}{p-1}}x -2\lambda |x|^{\frac{2-p}{p-1}}x
\ \ \text{is locally Lipschitz on } \R\setminus\{0\},
\end{align}
but may not be locally Lipschitz at the origin.

\begin{lemma} \label{lem:unique_ini}
Suppose either 
\begin{align} \label{eq:0924-3}
1< p\leq 2\quad \text{or}\quad  D_0 \neq 0.
\end{align}
Then \eqref{eq:0610-10} possesses a unique solution in $C^2([0,L])$.
Moreover, the solution can be uniquely extended to a $C^2$-solution in $\R$.
\end{lemma}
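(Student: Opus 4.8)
The plan is to put \eqref{eq:0610-10} into the Newtonian form $2p^2 w'' + \dot F(w) = 0$ and to exploit the observation, recorded in \eqref{eq:211101-1}, that the \emph{only} obstruction to applying classical ODE theory is the failure of $\dot F$ to be locally Lipschitz at the origin. The guiding principle is that the hypothesis ($1<p\le2$, or $D_0\neq0$) forces the phase-space orbit $s\mapsto(w(s),w'(s))$ to avoid the singular equilibrium $(0,0)$, so that Picard--Lindel\"of applies along the whole trajectory. Throughout I would use the conservation law \eqref{eq:0610-4}, namely $p^2 w'(s)^2 + F(w(s)) = D_0$, and the coercivity of $F$ from Lemma~\ref{lem:0612-1}.

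First I would dispose of the case $1<p\le2$. Since $\frac{1}{p-1}\ge1$, the function $x\mapsto|x|^{\frac{2-p}{p-1}}x=\sgn(x)|x|^{\frac{1}{p-1}}$ is locally Lipschitz on all of $\R$, while $x\mapsto|x|^{\frac{2}{p-1}}x$ is even $C^1$ near $0$; hence $\dot F$ is locally Lipschitz on $\R$. Picard--Lindel\"of yields a unique local $C^1$ solution of the first-order system associated with \eqref{eq:0610-10}, and since $w''=-\dot F(w)/(2p^2)$ is then continuous, the solution is $C^2$. This settles local existence and uniqueness in the regular regime.

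The substantial case is $p>2$ with $D_0\neq0$, where I would distinguish two subcases according to the sign of $D_0$. If $D_0<0$, then a zero of $w$ would force $p^2 w'(s)^2 = D_0<0$, which is impossible; moreover $F(w(s))\le D_0<0=F(0)$ confines $w$ to the set $\{F\le D_0\}$, which by continuity of $F$ lies at positive distance from the origin and is bounded by coercivity, so $\dot F$ is Lipschitz along the orbit and uniqueness is classical. If $D_0>0$, then at any zero $s_*$ of $w$ the conservation law gives $w'(s_*)=\pm\sqrt{D_0}/p\neq0$, so every zero is transversal and hence isolated, and between zeros $w$ stays away from the origin. The main obstacle is therefore to prove uniqueness \emph{across} such a transversal crossing, where the orbit meets the line $\{w=0\}$ on which $\dot F$ fails to be Lipschitz.

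For the crossing I would argue as follows. Near $s_*$, say with $w'(s_*)>0$, continuity gives $w'>0$ on a two-sided neighborhood, so the conservation law may be solved for the positive branch, giving the first-order autonomous equation $w'=\tfrac{1}{p}\sqrt{D_0-F(w)}=:G(w)$ valid on both sides of $s_*$. Since $D_0-F(0)=D_0>0$ and $F$ is $C^1$ near $0$ (both exponents $\frac{2p}{p-1}$ and $\frac{p}{p-1}$ exceed $1$), the map $G$ is $C^1$, hence locally Lipschitz, near $w=0$. Any two $C^2$ solutions sharing the data at $s_*$ thus solve the same Lipschitz first-order problem $u'=G(u)$, $u(s_*)=0$, and therefore coincide near $s_*$; existence of at least one solution through the origin is supplied by Peano's theorem applied to the continuous system. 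Combining all cases gives local existence and uniqueness at every point of $[0,L]$. Finally I would globalize: by \eqref{eq:0610-4} and $F(x)\to\infty$ as $|x|\to\infty$, one has $|w(s)|\le M$ and $|w'(s)|\le\sqrt{D_0-\min F}/p$ with $M$ depending only on the data, whence $|w''|$ is bounded through the equation; this rules out finite-time blow-up, so the solution extends to all of $\R$. Global uniqueness then follows by the standard connectedness argument, the set $\{\,s : (w_1,w_1')(s)=(w_2,w_2')(s)\,\}$ being nonempty, closed, and --- by the local uniqueness just established at every point of the orbit --- open.
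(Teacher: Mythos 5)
Your proposal is correct and follows essentially the same route as the paper: local Lipschitz continuity of $\dot F$ handles $1<p\le2$ and the off-axis portions, the conservation law confines the orbit away from the origin when $D_0<0$, the transversal crossing for $D_0>0$ is treated by passing to the first-order equation $w'=\tfrac1p\sqrt{D_0-F(w)}$ whose right-hand side is Lipschitz near $w=0$, and the a priori bounds from \eqref{eq:0610-4} give global extension. No gaps.
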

\begin{proof}
The proof proceeds with some standard ODE arguments.
We first indicate that any solution $w$ to \eqref{eq:0610-10} extends to $\R$ thanks to a priori bounds up to first order, since \eqref{eq:0610-4} implies $p^2 |w'(s)|^2 \leq D_0 - \min F$, while \eqref{eq:0610-4} implies $F(w(s)) \leq D_0$ and $F(x)\to\infty$ as $x\to\pm\infty$ (see Lemma~\ref{lem:double-ido}).
The only delicate point is therefore uniqueness, which we discuss below.

In the case that $1<p\leq 2$, the function $\dot{F}$ is clearly locally Lipschitz on $\R$, and hence uniqueness follows by the Picard--Lindel\"of theorem.

Next we turn to the case that $p>2$ and $D_0<0$.
In this case, by \eqref{eq:0610-4}, $F$ takes a negative value, so Lemma~\ref{lem:0612-1} implies that $\lambda>0$ and Lemma~\ref{lem:double-ido} implies the a priori bound $0<\zeta \leq |w| \leq \xi$ for some $\zeta\in(0,\xi)$.
This implies that any solution $w$ is away from zero and hence \eqref{eq:211101-1} implies uniqueness.

Finally we assume that $p>2$ and $D_0>0$.
By \eqref{eq:211101-1}, uniqueness holds except at points $s_0$ where $w(s_0)=0$, so it is sufficient to show that even at such a point $s_0$ uniqueness still holds.
By $D_0>0$ we have $w'(s_0)\neq0$; by symmetry we may assume that $w'(s_0)>0$.
Then around $s_0$, any solution $w$ also solves the equation $w'(s) = \frac{1}{p}\sqrt{D_0 -F(w(s))}$, whose right-hand side is locally Lipschitz around $w=0$ since $D_0>0$.
Hence uniqueness holds at $s_0$.
The proof is complete.
\end{proof}

\begin{lemma} \label{lem:flat-core}
Suppose that \eqref{eq:0924-3} does not hold, i.e., 
\[
p > 2\quad \text{and}\quad  D_0 = 0.
\]
If $\lambda\leq0$, then $w\equiv0$ is a unique solution of \eqref{eq:0610-10}. 
If $\lambda>0$, then $w\in C^2([0,L])$ is a solution of \eqref{eq:0610-10} if and only if 
\begin{align}\label{eq:211101-3}
w(s)= |k(s)|^{p-2}k(s) \quad  \text{for some }  k \text{ of flat-core type}.
\end{align}
\end{lemma}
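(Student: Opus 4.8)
The plan is to extract everything from the Hamiltonian identity \eqref{eq:0610-4}, which on the present energy level reads $p^2w'(s)^2+F(w(s))\equiv D_0=0$. I first settle the sign of $\lambda$ and the easy implication, and then concentrate on the structural ``only if'' part for $\lambda>0$, where the degeneracy of the equation at $w=0$ is exactly what produces the flat cores.

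When $\lambda\le0$, Lemma~\ref{lem:0612-1}(ii) gives that $F$ is convex, nonnegative, and vanishes only at the origin; since \eqref{eq:0610-4} forces $p^2w'(s)^2=-F(w(s))\le0$, both nonnegative terms vanish identically and $w\equiv0$, which is plainly the unique $C^2$-solution. For $\lambda>0$ the implication \eqref{eq:211101-3}$\Rightarrow$``solution'' is immediate: if $w=|k|^{p-2}k$ with $k$ of flat-core type, then Proposition~\ref{prop:sech-EL} shows $k$ solves \eqref{eq:EL}, so $w$ solves the transformed weak equation \eqref{eq:0510-1}, and Lemma~\ref{lem:0812} upgrades this to a classical solution of \eqref{eq:0610-1}, hence of \eqref{eq:0610-10}. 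Evaluating \eqref{eq:0610-4} at any zero of $k$, where $w=w'=0$ by the computation in the proof of Proposition~\ref{prop:sech-EL}, confirms $D_0=0$, consistent with the standing hypothesis.

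For the converse, let $w\in C^2([0,L])$ solve \eqref{eq:0610-10} with $\lambda>0$ and $D_0=0$; I claim that $w\equiv0$ or $w$ has the flat-core form \eqref{eq:211101-3}. From \eqref{eq:0610-4} we get $F(w(s))=-p^2w'(s)^2\le0$, so by Lemma~\ref{lem:0612-1}(i) the range of $w$ lies in $[-c,c]$, with $c$ as in \eqref{eq:0925-3}; moreover $F(0)=0$ shows that every zero of $w$ is a double zero $w=w'=0$. On the open set $\{w\ne0\}$ the nonlinearity $\dot F$ is locally Lipschitz by \eqref{eq:211101-1}, so local uniqueness holds there. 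On a connected component of $\{w\ne0\}$ compactly contained in $(0,L)$, $w$ keeps a constant sign $\sigma_i\in\{-1,1\}$ and $|w|$ attains an interior maximum at some $s_i$ with $w'(s_i)=0$; then $F(w(s_i))=0$ forces $|w(s_i)|=c$. The explicit solution furnished by Theorem~\ref{thm:cndn} with $q=1$, namely $\bar w(s):=\sigma_i(2A_{p,\lambda})^{p-1}(\sech_p(A_{p,\lambda}(s-s_i)))^{p-1}$, satisfies $\bar w(s_i)=\sigma_i c$, $\bar w'(s_i)=0$ and solves \eqref{eq:0610-1}; local uniqueness gives $w\equiv\bar w$ as long as both stay nonzero. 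Since $\bar w$ is a single signed mountain, nonzero exactly on $(s_i-T_{p,\lambda},s_i+T_{p,\lambda})$ and vanishing with zero derivative at its endpoints, the component must coincide with this interval. Thus $w$ restricted to each component is a signed $\sech_p$-mountain, $w\equiv0$ on the interior of the zero set, the mountains have common width $2T_{p,\lambda}$ and are disjoint (whence $s_{i+1}\ge s_i+2T_{p,\lambda}$ and, $[0,L]$ being bounded, their number $N$ is finite), and components meeting an endpoint are matched to a possibly partial mountain with center $s_i\in(-T_{p,\lambda},L+T_{p,\lambda})$ by the same uniqueness argument. Collecting the data $(N,\sigma_i,s_i)$ reproduces exactly \eqref{eq:1118-1}, i.e.\ \eqref{eq:211101-3}.

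The main obstacle is precisely the failure of uniqueness at the degenerate equilibrium $(w,w')=(0,0)$: since $\dot F$ is not Lipschitz there, a solution resting at $0$ may either remain flat or launch a fresh mountain, and organizing this freedom into the discrete data $(N,\sigma_i,s_i)$ is the heart of the argument. The quantitative fact that makes it close is that for $p>2$ the separation-of-variables integral $\int \frac{dw}{\sqrt{-F(w)}}$ converges near $w=0$, because $-F(w)\sim 2\lambda(p-1)|w|^{p/(p-1)}$ there and $\frac{p}{2(p-1)}<1$ exactly when $p>2$; this is why each mountain reaches zero in the finite arclength $T_{p,\lambda}$, genuine flat parts appear, and the whole phenomenon is absent for $p\le2$. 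A secondary technical point is the careful treatment of components touching $\{0,L\}$, where no interior peak need exist and one instead matches a partial mountain directly to the boundary data.
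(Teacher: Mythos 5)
Your proof is correct and follows essentially the same route as the paper: the conservation law $p^2w'^2+F(w)\equiv 0$, the decomposition of $\{w\neq0\}$ into connected components, and uniqueness of the ODE away from $w=0$ to identify each component with a shifted $\sech_p$-mountain. The only cosmetic difference is that you match the model solution at the interior peak where $|w|=(2A_{p,\lambda})^{p-1}$ and $w'=0$, whereas the paper matches at an arbitrary point $a$ of the component using the bound $|w(a)|\le(2A_{p,\lambda})^{p-1}$ together with the monotonicity of $\sech_p$ --- which is also the argument you ultimately need for components touching $\{0,L\}$, where no interior peak is guaranteed.
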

\begin{proof}
Global existence follows in the same way as in the proof of Lemma \ref{lem:unique_ini}.
By $D_0=0$ and \eqref{eq:0610-4}, any solution $w$ satisfies
\begin{equation}\label{eq:230527-5}
    p^2w'^2+F(w)\equiv0.
\end{equation}
Hence, if $\lambda\leq0$, then by Lemma \ref{lem:0612-1} it is clear that $w\equiv0$.

In what follows we assume $\lambda>0$.
By Proposition \ref{prop:sech-EL} and Lemma~\ref{lem:0812}, any function $w:=|k|^{p-2}k$ with $k$ of flat-core type is a $C^2$-solution to \eqref{eq:0610-1}.
In this case it is easy to check $D_0=0$ by using \eqref{eq:0610-4}, since $w$ may be canonically extended to a global solution with compact support so that $w=w'=0$ holds at some point.

The remaining task is to show that, if $p>2$, $\lambda>0$, and $D_0=0$, then any solution $w\in C^2([0,L])$ to \eqref{eq:0610-10} is of the form \eqref{eq:211101-3}.
Let $I\subset[0,L]$ be any connected component of the nonzero set of $w$, and $a\in I$ be any point.
One easily notices that it is sufficient to show that, for some $\beta\in\R$, the function $w|_{I}$ agrees (on $I$) with $w_{\beta}$ defined by
\begin{equation} \notag
    \text{$w_{\beta}(s):=\sgn\big(w(a)\big)\Big(2A_{p,\lambda}\sech_p(A_{p,\lambda}s+\beta)\Big)^{p-1}$}.
\end{equation}
In particular, $I\subset(\frac{-T_{p,\lambda}-\beta}{A_{p,\lambda}},\frac{T_{p,\lambda}-\beta}{A_{p,\lambda}})$.
Since any $w_{\beta}$ also solves the equation in \eqref{eq:0610-10} which is uniquely solvable whenever $w\neq0$, it is now sufficient to verify that there is $\beta\in\R$ such that $w_\beta(a)=w(a)$ and $w_\beta'(a)=w'(a)$.
Notice that by \eqref{eq:230527-5} we have $F(w(a))\leq 0$, and hence, by Lemma \ref{lem:double-ido} and \eqref{eq:0925-3}, we have $|w(a)|\leq (2A_{p, \lambda})^{p-1}$.
Thanks to this bound we can find $\beta\in \R$ such that
\begin{equation}\label{eq:0529-2}
    w_\beta(a)=w(a), \quad \sgn(w_\beta'(a))=\sgn (w'(a)),
\end{equation}
since $\sech_p$ is an even function on $(-\K_p(1),\K_p(1))$ and decreasing from $1$ to $0$ on $[0,\K_p(1))$ as in Proposition \ref{prop:property-cndn}, and since $w_\beta$ has the same sign as $w(a)\neq0$.
For this $\beta$, since both $w$ and $w_\beta$ satisfy \eqref{eq:230527-5}, we also have $p^2w_\beta'(a)^2+F(w_\beta(a))=p^2w'(a)^2+F(w(a))$, which together with \eqref{eq:0529-2} implies $w_\beta'(a)=w'(a)$.
\end{proof}

We are now in a position to prove Theorem~\ref{thm:k-classify}.

\begin{proof}[Proof of Theorem~\ref{thm:k-classify}]
We prove that if we assume one of the conditions in (i)--(v), then for any solution $k$ in the sense of Definition \ref{def:CEL} the corresponding assertion in (I)--(V) holds true.

\smallskip

\textbf{Case (I).} 
Here we assume (i).
If $p>2$, the assertion in (I) is trivial, while if $p\leq2$, we have $k\equiv0$ by uniqueness in Lemma \ref{lem:unique_ini}.

\smallskip

\textbf{Case (II).} 
Here we assume (ii).
By uniqueness in Lemma \ref{lem:unique_ini} it is sufficient to prove that there is at least one solution to \eqref{eq:CEL} in form of (II).
Let $\mu>0$ denote the positive one of the two solutions of $F(x)-D_0=0$.
Using this $\mu$, we define $A_{\mu}>0$, $q_{\mu}\in(0,1)$, and $\alpha_{\mu}>0$ by
\begin{align*}
A_{\mu}:= \mu^{{\frac{1}{p-1}}}=|\mu|^{\frac{2-p}{p-1}}\mu, \quad q_{\mu}^2:=\left(2-\frac{2\lambda}{|A_{\mu}|^{p} (p-1)}\right)^{-1}, \quad 
\alpha_{\mu}:=\frac{1}{2q_{\mu}}A_{\mu}.
\end{align*}
Here we remark that $q_{\mu}^2\in(0,1)$ is well defined; indeed, if $\lambda\leq0$, then clearly $0<q_{\mu}^2 \leq 1/2$;
if $\lambda>0$, then we have
$\mu>c$, where $c$ is the positive root of $F$, cf.\ \eqref{eq:0925-3}, and hence
$
|A_{\mu}|^p = \mu^{\frac{p}{p-1}} > \frac{2\lambda}{p-1},
$
which implies that $q_\mu^2=(2-\frac{2\lambda}{|A_{\mu}|^{p} (p-1)})^{-1}\in(0,1)$.
Then the triplet $(A_{\mu}, q_{\mu}, \alpha_{\mu})$ satisfies the relations in Proposition~\ref{prop:cndn} so that 
$$k_{\mu, \beta}(s):=A_{\mu} \cn_p(\alpha_{\mu}s+\beta, q_{\mu})$$
is a solution of \eqref{eq:EL} for any $\beta \in \R$.

Finally we address the compatibility with the given initial data.
More precisely, we show that for the above $\mu$ there is some $\beta\in \R$ such that $w_{\beta}(0)=w_0$ and $w_{\beta}'(0)=\dot{w}_0$, where $w_{\beta}:=|k_{\mu,\beta}|^{p-2}k_{\mu,\beta}$.
By definition of $D_0$ and $\mu$, 
we have $
F(w_0) \leq p^2\dot{w}_0^2 + F(w_0) = D_0 = F(\mu)$, and this with
Lemma~\ref{lem:0612-1} implies $|w_0| \leq \mu=|A_{\mu}|^{p-2}A_{\mu}$.
Therefore, as in the last part of the proof of Lemma \ref{lem:flat-core}, we can appeal to the properties of $\cn_p$ in Proposition \ref{prop:property-cndn} to deduce that there is some $\beta\in\R$ such that
\begin{equation}\label{eq:0529-1}
    w_\beta (0) = w_0, \quad \sgn(w_\beta'(0)) = \sgn(\dot{w}_0).
\end{equation}
In addition, since $w_\beta$ solves the equation in \eqref{eq:0610-10} and hence \eqref{eq:0610-4} holds, and since $F(\mu)=D_0=p^2 \dot{w}_0^2 +F(w_0)$, we obtain
$$p^2w'_\beta(0)^2+F(w_\beta(0)) =p^2w'_\beta(-\tfrac{\beta}{\alpha_\mu})^2 +F(w_\beta(-\tfrac{\beta}{\alpha_\mu})) = F(\mu)= p^2 \dot{w}_0^2 +F(w_0),$$
which with \eqref{eq:0529-1} implies $w'_\beta(0)=\dot{w}_0$.


\smallskip

\textbf{Case (III).}
Here we assume (iii).
Note that in this case $\lambda >0$. 
Indeed, if $\lambda\leq0$, then the strictly convex function $D_0$ takes the minimum $0$ only at $(w_0,\dot{w}_0)=(0,0)$.

Now for $\beta\in\R$ we define 
$
k_{\beta}(s):= 2\sgn(w_0)A_{p, \lambda} \sech_p(A_{p, \lambda} s+\beta)
$
with $A_{p, \lambda}$ in \eqref{eq:1105-1}.
Then any $k_\beta$ satisfies the equation in 
\eqref{eq:CEL} by Proposition~\ref{prop:cndn}.
In addition, we can prove that there is some $\beta$ such that $w_{\beta}:=|k_\beta|^{p-2}k_\beta$ satisfies $w_\beta(0)=w_0$ and $w_\beta'(0)=\dot{w}_0$, completely in the same way as in the proof of Lemma \ref{lem:flat-core} (by interpreting $T_{p,\lambda}=\K_p(1)=\infty$ and replacing $(w(a),w'(a))$ with $(w_0,\dot{w}_0)$).

\smallskip

\textbf{Case (III').}
Here we assume (iii'), i.e., $k$ is a nonzero solution to \eqref{eq:CEL} with $D_0=0$.
Such a solution exists only if $p>2$ and $\lambda>0$ by Lemma \ref{lem:unique_ini} and Lemma \ref{lem:flat-core}.
Hence, again by Lemma \ref{lem:flat-core}, any $k$ is of flat-core type.

\smallskip

\textbf{Case (IV).}
Here we assume (iv).
As in Case (III), we may assume $\lambda>0$, and it is sufficient to find at least one solution by uniqueness.
By Lemma~\ref{lem:double-ido}, the equation $F(x)=D_0$ has four solutions.
Let $\xi>0$ denote the maximal one.
Set
\begin{align} \notag
A_{\xi}:= |\xi|^{\frac{2-p}{p-1}}\xi, \quad 
\alpha_\xi:=\frac{A_\xi}{2}, \quad 
q_\xi^2:=2-\frac{2\lambda}{|A_{\xi}|^{p} (p-1)}.
\end{align}
Here we have $q_\xi^2\in(0,1)$, since we can show that $\frac{\lambda}{p-1} < |A_{\xi}|^{p} = \xi^{\frac{p}{p-1}} < \frac{2\lambda}{p-1}$.
Indeed, by Lemma~\ref{lem:0612-1}, $F$ takes its minimum at $(\frac{\lambda}{p-1})^{\frac{p-1}{p}}$, and $F(x)$ is increasing for $x>(\frac{\lambda}{p-1})^{\frac{p-1}{p}}$ while $F$ takes zero at $x=(\frac{2\lambda}{p-1})^{\frac{p-1}{p}}$.
This together with $\min F < D_0 < 0$ implies that  
$(\frac{\lambda}{p-1})^{\frac{p-1}{p}} < \xi < (\frac{2\lambda}{p-1})^{\frac{p-1}{p}}$, so that $q_\xi^2\in(0,1)$.
Then the triplet $(A_\xi,q_\xi,\alpha_\xi)$ satisfies the relations in Proposition~\ref{prop:cndn} so that 
$$k_{\xi,\sigma,\beta}(s):=\sigma A_\xi\dn_p(\alpha_\xi s+\beta,q_\xi)$$ 
is a solution of \eqref{eq:EL} for any $\sigma\in\{1,-1\}$ and $\beta\in\R$.

Finally, for the above $\xi$, we find suitable $\sigma$ and $\beta$ compatible with the initial data, i.e., $w_{\sigma,\beta}(0)=w_0$ and $w'_{\sigma,\beta}(0)=\dot{w}_0$ with $w_{\sigma,\beta}:=|k_{\xi,\sigma,\beta}|^{p-2}k_{\xi,\sigma,\beta}$.
Since $w_0\neq0$ by $D_0<0$, we can take $\sigma:=\sgn(w_0)$.
In order to find $\beta$, we first note that by Lemma~\ref{lem:double-ido}, the (unique) positive solution of $F(x)=D_0$ other than $\xi$ is given by 
$\zeta:=\big(\frac{2\lambda}{p-1} - \xi^{\frac{p}{p-1}} \big)^{\frac{p-1}{p}}.$
From $F(w_0) \leq p^2 \dot{w}_0 + F(w_0) =D_0$ 
we infer that $ \zeta \leq |w_0| \leq \xi$.
This bound together with the properties of $\dn_p$ in Proposition \ref{prop:property-cndn} implies, as in Case (II), that there is some $\beta$ such that $w_{\sigma,\beta}(0)=w_0$ and $\sgn(w'_{\sigma,\beta}(0))=\sgn(\dot{w}_0)$ with $\sigma:=\sgn(w_0)$.
In addition, again as in Case (II), by the conservation law \eqref{eq:0610-4} and the fact that $F(\xi)=D_0=p^2\dot{w}_0^2+F(w_0)$, we also deduce that $w'_{\sigma,\beta}(0)=\dot{w}_0$.

\smallskip

\textbf{Case (V).} 
Here we assume (v).
In this case we also have uniqueness, so only need to find one solution.
As in Cases (III) and (IV), we have $\lambda>0$.
In addition, since $p^2\dot{w}_0^2 +F(w_0)=D_0=\min F$, we have $\dot{w}_0=0$ and $F(w_0)=\min F$.
By Lemma \ref{lem:0612-1}, $|w_0|=(\frac{\lambda}{p-1})^{\frac{p-1}{p}}$.
Hence $k(s) := \sgn(w_0)(\frac{\lambda}{p-1})^{\frac{1}{p}}$
is a solution of \eqref{eq:CEL} by direct computation.
\end{proof}

\begin{remark}
Of course, the classification of classical elasticae with respect to the Hamiltonian energy level is well known both in terms of curvature as well as of tangential angle (in analogy with the simple pendulum equation).
In particular, a detailed classification involving initial values of curvature as in Theorem \ref{thm:k-classify} has been already obtained in \cite[Proposition 3.3]{Lin96} for $p=2$.
\end{remark}
\begin{remark}
Ardentov--Lokutsievskiy--Sachkov deal with another (anisotropic) type of ``$p$-elastica'' in the context of optimal control \cite[Section 5.10]{ALS21}.
This is different from our (isotropic) $p$-elastica, but interestingly they have the common feature that non-uniqueness induced by degeneracy occurs only in the special energy level of Hamiltonian corresponding to the separatrix in the phase portrait, i.e., the ``borderline'' case.
\end{remark}

\subsection{Explicit parametrization}

We now apply Theorem \ref{thm:k-classify} to the proof of Theorem~\ref{thm:Formulae_1} and Theorem \ref{thm:Formulae_2}.

\begin{proof}[Proof of Theorem~\ref{thm:Formulae_1} and Theorem \ref{thm:Formulae_2}]
Let $\gamma$ be a $p$-elastica and $k$ be the signed curvature of $\gamma$.
Up to reparametrization we may assume that $\gamma$ is parametrized by the arclength.
By Proposition~\ref{prop:p-ELk}, $k$ satisfies the Euler--Lagrange equation  \eqref{eq:EL}, and 
it follows from Lemma~\ref{lem:0812} that $|k|^{p-2}k \in C^2([0,L])$.
Then setting 
$
w_0:= |k(0)|^{p-2}k(0)$ and $\dot{w}_0:=\frac{d}{ds}|_{s=0}(|k(s)|^{p-2}k(s) )$
we observe that $k$ is characterized as a solution of \eqref{eq:CEL}.
By Theorem \ref{thm:k-classify}, this
$k$ is given by one of Cases I, II, III, III', IV, and V.
Cases I and V are trivial, so we consider the other cases.
In Cases II and IV we will deal with a general $p\in(1, \infty)$ (regardless whether $p\leq2$ or $p>2$).

\smallskip

\textbf{Case II} (\textsl{Wavelike $p$-elastica})\textbf{.}
In this case, the signed curvature $k:[0,L]\to\R$ of $\gamma$ is given by 
$k(s)= \pm2\alpha q \cn_p(\alpha s +\beta, q)$ for some $\alpha>0$, $\beta\in \R$, and $q\in[0,1)$.
Therefore, up to similarity and reparametrization, the curve $\gamma$ is given by a part of a planar curve with curvature
\[
k_{w}(s):= 2q \cn_p(s,q), \quad s\in \R.
\]
It now suffices to show that if we define $\gamma_w$ and $\theta_w$ by
\[
\gamma_w(s) 
:= \begin{pmatrix}
 X_w(s) \\
 Y_w(s)
 \end{pmatrix} +v_w
:= \int_0^s 
 \begin{pmatrix}
 \cos \theta_w(\sigma) \\
 \sin \theta_w(\sigma)
 \end{pmatrix}
d\sigma +v_w, 
\quad 
\theta_w(s):=\int_0^s k_w(\sigma) \,d\sigma,
\]
where $v_w\in \R^2$ will be chosen later, then they are indeed given by the formulae in Case II of Theorem \ref{thm:Formulae_1}.
First, noting that $\frac{da}{dx}=| \cos a(x)|^{\frac{2}{p}-1}\sqrt{1-q^2\sin^2 a(x)}$, by the change of variables $\phi=\amcn(\sigma,q)$
we see that
\begin{align*}
\theta_w(s)&=\int_0^s 2q \cn_p(\sigma, q)\,d\sigma 
= \int_0^{\amcn(s,q)}\frac{2q\cos \phi}{\sqrt{1-q^2\sin^2 \phi}} \,d\phi \\
&=\left[2\arcsin\big(q\sin \phi \big)\right]_{\phi=0}^{\phi=\amcn(s,q)}=2\arcsin\big(q\sn_p (s,q) \big).
\end{align*}
Therefore, it also follows that
\begin{align*}
X_w(s)&=\int_0^s \Big( 1-2\sin^2\tfrac{\theta_w(\sigma)}{2} \Big) \,d\sigma 
=\int_0^s \Big( 1-2q^2\sn_p^2(\sigma, q) \Big) \,d\sigma \\
&=\int_0^{\amcn(s,q)}\frac{1-2q^2\sin^2 \phi}{\sqrt{1-q^2\sin^2 \phi}} |\cos\phi|^{1-\frac{2}{p}} \,d\phi \\
&=2\E_{1,p}\big( \amcn(s,q), q \big) - \mathrm{F}_{1,p}\big(\amcn(s,q),q \big)
=2\E_{1,p}\big( \amcn(s,q), q \big) -s.
\end{align*}
Noting that $\theta_w(s)/2 = \arcsin (q\sn_p (s,q)) \in (-\pi/2, \pi/2)$, we have
\begin{align*}
Y_w(s)&=\int_0^s \sin \theta_w(\sigma) \,d\sigma 
=\int_0^s 2\sin \tfrac{\theta_w(\sigma)}{2}\cos \tfrac{\theta_w(\sigma)}{2} \,d\sigma \\
&=\int_0^s 2q\sn_p(\sigma, q) \sqrt{1-q^2\sn_p^2(\sigma, q) } \,d\sigma 
=\int_0^{\amcn(s,q)} 2q\sin \phi |\cos\phi|^{1-\frac{2}{p}} \,d\phi \\
&=\left[-2q(2-\tfrac{2}{p})^{-1} |\cos\phi|^{1-\frac{2}{p}}\cos\phi \right]_{\phi=0}^{\phi=\amcn(s,q)} \\
&= q\frac{p}{p-1}\Big(1- |\cn_p(s,q)|^{p-2}\cn_p(s,q) \Big) .
\end{align*}
Hence we obtain \eqref{eq:EP2} by choosing $v_w=(0, -\frac{qp}{p-1})$.

\smallskip

\textbf{Case III} (\textsl{Borderline $p$-elastica: $1<p\leq2$})\textbf{.} 
In this case the curvature is of the form 
$k(s)=2\sgn (w_0)A_{p, \lambda}\sech_p(A_{p, \lambda}s +\beta)$.
Analogous to the previous case, up to similarity and reparametrization, it suffices to consider
\begin{align*}
    k_{b}(s)&:= 2 \sech_p s, \ \  
\theta_b(s):=\int_0^s k_b(\sigma) \,d\sigma, \\
\gamma_b(s) 
&:= \begin{pmatrix}
 X_b(s) \\
 Y_b(s)
 \end{pmatrix} + v_b
:= \int_0^s 
 \begin{pmatrix}
 \cos \theta_b(\sigma) \\
 \sin \theta_b(\sigma)
 \end{pmatrix}
d\sigma + v_b,
\end{align*}
where $v_b\in \R^2$ will be chosen later, and to verify that they coincide with those in Case III of Theorem \ref{thm:Formulae_1}.
By \eqref{eq:am2} and \eqref{eq:dn_p},
\begin{align}\label{eq:1204-5}
\frac{d}{ds} \amdn(s,q) = \sqrt[p]{1-q^2\sin^2\amdn(s,q)} = \dn_p(s,q) \quad (q\in[0,1]).
\end{align}
By $\sech_p s= \dn_p(s,1)$ and $\amcn(s,1)=\amdn(s,1)$, and by \eqref{eq:1204-5} we deduce that
\begin{align*}
\begin{split}
\theta_b(s)&=\int_0^s 2\dn_p(\sigma,1) \,d\sigma
=2\Big[\amdn(\sigma, 1) \Big]_{\sigma=0}^{\sigma=s} =2\amdn(s, 1) 
=2\amcn(s, 1).
\end{split}
\end{align*}
By definition we have $\sech_p x= \cos^{\frac{2}{p}}\amcn(s, 1)$, and hence 
\begin{align*}
X_b(s)&=\int_0^s\cos\theta_b(\sigma)\,d\sigma
=\int_0^s \cos(2\amcn(\sigma, 1))\,d\sigma \\
&=\int_0^s \Big( 2\cos^2\amcn(\sigma, 1) -1 \Big)d\sigma 
=2\int_0^s (\sech_p \sigma)^p d\sigma-s 
=2\tanh_p s-s,
\end{align*}
where we recall that $\tanh_p s:=\int_0^s (\sech_p \sigma)^p d\sigma$.
Moreover, partly for later use, we prepare the following computation which is valid for any $q\in(0,1]$: By the change of variables $\phi=\amdn(s,q)$,
\begin{align}\label{eq:1204-10}
\begin{split}
\int_0^s\sin (2\amdn(\sigma,q))\,d\sigma &= \int_0^{\amdn(s,q)}\frac{\sin 2\phi}{\sqrt[p]{1-q^2\sin^2\phi}}\,d\phi \\ &=\int_0^{\amdn(s,q)}\frac{2\sin \phi}{\sqrt[p]{1-q^2\sin^2\phi}}\cos\phi\,d\phi \\
&=\Big[
-\frac{1}{q^2(1-\tfrac{1}{p})}\big(1-q^2\sin^2\phi\big)^{1-\frac{1}{p}}
\Big]_{\phi=0}^{\phi=\amdn(s,q)} \\
&=\frac{1}{q^2}\frac{p}{p-1}\Big(1-\dn_p^{p-1}(s,q) \Big).
\end{split}
\end{align}
Taking $q=1$ in \eqref{eq:1204-10}, we obtain
\begin{align*}
Y_b(s) = \int_0^s\sin\theta_b(\sigma)\,d\sigma = \int_0^s\sin (2\amcn(\sigma, 1))\,d\sigma 
=\frac{p}{p-1}\Big(1-(\sech_p s)^{p-1} \Big).
\end{align*}
We thus obtain \eqref{eq:EP3-1} by choosing $v_b:=(0,-\frac{p}{p-1})$.

\smallskip

\textbf{Case III'} (\textsl{Flat-core $p$-elastica: $p>2$})\textbf{.}
Note first that by the same computation as in Case III we deduce that the curves $\gamma_b^\pm:[-\K_p(1),\K_p(1)]\to\R^2$ parametrized by \eqref{eq:pm-border} indeed have the tangential angle $\theta_b^\pm(s)=\pm 2\amcn(s,1)=\pm2\amdn(s,1)$ and the curvature $k_b^\pm(s)=\pm 2\sech_ps$ for $s\in[-\K_p(1), \K_p(1)]$.
In particular, $\theta_b^\pm(\K_p(1))=\pm\pi$ and $\theta_b^\pm(-\K_p(1))=\mp\pi$ hold so that
\begin{equation}\nonumber
    (\gamma_b^\pm)'(\K_p(1))=(\gamma_b^\pm)'(-\K_p(1))=
    \begin{pmatrix}
    -1 \\ 0
    \end{pmatrix}.
\end{equation}
Therefore, the curve $\gamma_f$ of the form \eqref{eq:EP3-2} is always well defined as an arclength parametrized $W^{2,p}$-curve, and in addition has curvature of the form
\begin{equation}\label{eq:0307-2}
    k_f(s) = \sum_{j=1}^N \sigma_j 2\sech_p(s-s_j), \quad s_j:=\big( \sum_{i=1}^j L_i \big) + (2j-1)\K_p(1),
\end{equation}
with the same $N\geq1$, $\sigma_j$'s, and $L_j$'s in \eqref{eq:EP3-2}.

Now we enter the proof.
In this case, the curvature $k$ of the $p$-elastica $\gamma$ under consideration is of flat-core type, cf.\ Definition \ref{def:flat-core}.
Up to rescaling of the curve $\gamma$ we may suppose that the curvature is given by 
\begin{equation}\label{eq:0307-1}
    k(s)=\sum_{j=1}^N\sigma_j 2\sech_p(s-s_j), \quad s\in [0, L],
\end{equation}
where $N\geq1$, $\{\sigma_j\}_{j=1}^N\subset\{+,-\}$, and $0\leq s_1<\dots<s_N\leq L$ such that $s_{j+1}\geq s_j+2\K_p(1)$ for $j=1,\dots,N-1$.
After the canonical extension of $k$ from $[0,L]$ to $[\min\{0,s_1-\K_p(1)\},\max\{L,s_N+\K_p(1)\}]$ and shifting the parameter (and redefining $L$), we may additionally suppose that $s_1-\K_p(1)=0$ and $s_N+\K_p(1)=L$.
Now, if we let
\begin{equation}\label{eq:0307-3}
    L_1:=0, \qquad L_j:=s_j-s_{j-1}-2\K_p(1) \quad(j=2,\dots,N),
\end{equation}
then by \eqref{eq:0307-2} we deduce that the signed curvature of the arclength parametrized curve $\gamma_f$ defined in \eqref{eq:EP3-2} with the $L_j$'s in \eqref{eq:0307-3} and the $\sigma_j$'s in \eqref{eq:0307-1} is exactly given by the function $k_f$ in \eqref{eq:0307-2}.
Since any given curvature defines a unique arclength parametrized planar curve up to rigid motion, the proof is now complete.

\smallskip

\textbf{Case IV} (\textsl{Orbitlike $p$-elastica})\textbf{.} 
Similar to Case II and Case III, we may only consider the curvature of the form
\[
k_{o}(s)= 2 \dn_p(s, q), \quad s\in \R
\]
and compute the corresponding curve $\gamma_o$ and the tangential angle $\theta_o$ defined by
\[
\gamma_o(s) 
:= \begin{pmatrix}
 X_o(s) \\
 Y_o(s)
 \end{pmatrix} + v_o
:= \int_0^s 
 \begin{pmatrix}
 \cos \theta_o(\sigma) \\
 \sin \theta_o(\sigma)
 \end{pmatrix}
d\sigma +v_o, 
\quad 
\theta_o(s):=\int_0^s k_o(\sigma) \,d\sigma,
\]
where $v_o$ will be chosen later.
It follows from \eqref{eq:1204-5} that 
\begin{align*} 
\theta_o(s)=\int_0^s 2\dn_p(\sigma, q) \,d\sigma 
=2\amdn(s,q).
\end{align*}
With the change of variables $\phi=\amdn(\sigma,q)$ we compute
\begin{align*}
X_o(s)&=\int_0^s \cos \theta_o(\sigma) \,d\sigma 
=\int_0^{\amdn(s,q)}\frac{\cos 2\phi}{\sqrt[p]{1-q^2\sin^2\phi}}\,d\phi \\
&=\frac{1}{q^2}\int_0^{\amdn(s,q)}\frac{2(1-q^2\sin^2 \phi) +q^2-2}{\sqrt[p]{1-q^2\sin^2\phi}}\,d\phi \\
&=\frac{1}{q^2}
\int_0^{\amdn(s,q)} \bigg(
2(1-q^2\sin^2 \phi)^{\frac{p-1}{p}}
+(q^2-2) \frac{1}{\sqrt[p]{1-q^2\sin^2\phi}}
\bigg)d\phi \\
&=\frac{1}{q^2}\bigg( 2\E_{2, \frac{p}{p-1}}\big( \amdn(s,q), q\big)
+(q^2-2) \mathrm{F}_{2, p}\big( \amdn(s,q), q\big)
\bigg)\\
&=\frac{1}{q^2}\bigg( 2\E_{2, \frac{p}{p-1}}\big( \amdn(s,q), q\big)
+(q^2-2) s
\bigg).
\end{align*}
In addition, by using \eqref{eq:1204-10} with $q\in(0,1)$, we have
\begin{align*}
\begin{split}
Y_o(s)&=\int_0^s \sin \theta_o(\sigma) \,d\sigma = \int_0^s \sin(2\amdn(\sigma,q)) \,d\sigma =\frac{1}{q^2}\frac{p}{p-1}\Big(1-\dn_p^{p-1}(s,q) \Big).
\end{split}
\end{align*}
Thus we obtain \eqref{eq:EP4} with the choice of $v_o:=(0, -\frac{p}{q^2(p-1)})$.
\end{proof}

\subsection{Regularity}\label{regularity:p-ela}

Now the regularity results in Theorems \ref{thm:Soft_regularity}, \ref{thm:Optimal_regularity_1}, and \ref{thm:Optimal_regularity_2} directly follow by the already obtained results.

\begin{proof}[Proof of Theorems \ref{thm:Soft_regularity}, \ref{thm:Optimal_regularity_1}, and \ref{thm:Optimal_regularity_2}]
All but the optimality part directly follow from Theorems~\ref{thm:Formulae_1} and \ref{thm:Formulae_2} and 
Propositions~\ref{thm:regularity_dn}, \ref{thm:regularity_cn}, and \ref{thm:regularity_sech}.

Concerning the optimality, in the wavelike case with $\frac{1}{p-1}\not\in\N$ in both of Theorems \ref{thm:Optimal_regularity_1} and \ref{thm:Optimal_regularity_2}, the same loss of regularity occurs as $\cn_p$; in particular, even if the curvature $k$ takes zero only at an endpoint, it can be shown that $k\not\in W^{m_p,r_p}$ since the same proof for case (iii) of Proposition \ref{thm:regularity_cn} is valid even for a one-sided neighborhood such as $(x_0,x_0+\delta)$.

In case (ii) of Theorem \ref{thm:Optimal_regularity_1}, the additional assumption $Z \cap (0,L)\neq\emptyset$ ensures that the discontinuity of the $(m_p+3)$-rd derivative like $\cn_p$ indeed occurs in the interior of $[0,L]$.

In case (i) of Theorem \ref{thm:Optimal_regularity_2}, since $\partial Z\neq\emptyset$, the curvature is not identically zero but has a zero, the same loss of regularity occurs as $\sech_p$ (with $p>2$), more precisely as in case (ii) of Proposition \ref{thm:regularity_sech}.

Finally, in case (ii) of Theorem \ref{thm:Optimal_regularity_2}, the assumption that $Z$ is not discrete implies that the curvature (locally) has the same loss of regularity as $\sech_p$ (with $p>2$) around $\pm \K_p(1)$.
The proof is complete.
\end{proof}

\begin{proof}[Proof of Corollary~\ref{cor:regularity-special-case}]
This is a direct consequence of Theorems~\ref{thm:Optimal_regularity_1} and \ref{thm:Optimal_regularity_2}.
\end{proof}

\begin{remark}
A similar regularity result to Theorems~\ref{thm:Optimal_regularity_1} and \ref{thm:Optimal_regularity_2} is also known for a different equation involving the $p$-Laplacian, arising from Sobolev--Poincar\'e type inequalities \cite[Proposition~2.1]{O84}.
\end{remark}

\section{Closed $p$-elasticae} \label{sect:closed} 

In this section, applying our explicit formulae, we classify all closed planar $p$-elasticae; the only closed $p$-elasticae are a circle, a ``figure-eight'', and their multiple coverings.
Here we say that a planar curve $\gamma$ is \emph{closed} if the arclength parametrization $\tilde{\gamma}:[0,L]\to\R^2$ satisfies that $\tilde{\gamma}(0)=\tilde{\gamma}(L)$ and $\tilde{\gamma}'(0)=\tilde{\gamma}'(L)$.

In order to state our main theorem, we first define the figure-eight $p$-elasticae.
To this end, we recall a monotonicity of $q\mapsto {\E_{1,p}(q)}/{\K_{1,p}(q)}$ obtained in \cite{nabe14}.

\begin{lemma}[{\cite[Lemma 2]{nabe14}}]\label{lem:nabe-lem2}
Let $Q_p:[0,1]\to \R$ be the function given by
\begin{align*} 
Q_p(q):= 2 \frac{\E_{1,p}(q)}{\K_{1,p}(q)}-1 , \quad q\in[0,1).
\end{align*}
Then $Q_p$ is strictly decreasing on $[0,1)$ and satisfies $Q_p(0)=1$ and
\begin{align}\notag
\lim_{q\uparrow 1}Q_p(q)=
\begin{cases}
-1 \quad & \text{if} \ \ 1<p \leq 2, \\
-\dfrac{1}{p-1} & \text{if} \ \ p>2.
\end{cases}
\end{align}
\end{lemma}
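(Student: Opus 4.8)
The plan is to treat the two endpoint values and the monotonicity separately, with the monotonicity being the analytic heart of the matter. The value $Q_p(0)=1$ is immediate: setting $q=0$ in Definitions \ref{def:K_p} and \ref{def:E_p}, both integrands defining $\Ecn(0)$ and $\Kcn(0)$ collapse to $|\cos\theta|^{1-\frac2p}$, so $\Ecn(0)=\Kcn(0)$ and hence $Q_p(0)=2-1=1$.

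For the monotonicity I would differentiate the ratio directly. Writing $E:=\Ecn(q)$ and $K:=\Kcn(q)$, differentiation under the integral sign (legitimate for $q$ in compact subsets of $(0,1)$, since there $1-q^2\sin^2\theta\ge 1-q^2>0$ and the integrands are dominated by a constant multiple of $|\cos\theta|^{1-\frac2p}$, which is integrable on $(0,\tfrac\pi2)$ because $1-\frac2p>-1$) gives
\[
E'=-q\int_0^{\pi/2}\frac{\sin^2\theta}{\sqrt{1-q^2\sin^2\theta}}|\cos\theta|^{1-\frac2p}\,d\theta<0,\qquad K'=q\int_0^{\pi/2}\frac{\sin^2\theta}{(1-q^2\sin^2\theta)^{3/2}}|\cos\theta|^{1-\frac2p}\,d\theta>0
\]
for every $q\in(0,1)$. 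Since $Q_p=2E/K-1$, one computes $Q_p'=2(E'K-EK')/K^2$, and because $E,K>0$, $E'<0$ and $K'>0$, both terms $E'K$ and $-EK'$ in the numerator are negative, so $Q_p'(q)<0$ on $(0,1)$. As $Q_p$ is continuous on $[0,1)$ with $Q_p'\le0$ throughout and strict inequality on $(0,1)$, it is strictly decreasing on $[0,1)$. (The vanishing $Q_p'(0)=0$, caused by the prefactor $q$, does not affect strictness.)

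It remains to identify $Q_p(1)=\lim_{q\uparrow1}Q_p(q)$. If $1<p\le2$, then $\Ecn(1)=\int_0^{\pi/2}|\cos\theta|^{2-\frac2p}\,d\theta<\infty$ while $\Kcn(1)=\infty$ by Definition \ref{def:K_p}; passing to the limit (monotone convergence for $K$, dominated convergence for $E$) gives $\Ecn(q)/\Kcn(q)\to0$, whence $Q_p(1)=-1$. If $p>2$, then both complete integrals are finite and reduce to the elementary form $\int_0^{\pi/2}\cos^s\theta\,d\theta=\frac{\sqrt\pi}{2}\Gamma(\frac{s+1}{2})/\Gamma(\frac{s}{2}+1)$, with $s=-\frac2p$ for $K$ and $s=2-\frac2p$ for $E$; using $\Gamma(x+1)=x\Gamma(x)$ to cancel the common factors $\Gamma(\tfrac12-\tfrac1p)$ and $\Gamma(1-\tfrac1p)$, I expect
\[
\frac{\Ecn(1)}{\Kcn(1)}=\frac{\tfrac12-\tfrac1p}{1-\tfrac1p}=\frac{p-2}{2(p-1)},\qquad\text{so}\qquad Q_p(1)=\frac{p-2}{p-1}-1=-\frac{1}{p-1}.
\]
The main obstacle is not the monotonicity, which collapses once the sign pattern $E'<0<K'$ is secured, but rather the careful handling of the limit $q\uparrow1$: one must justify the divergence $\Kcn(q)\to\infty$ for $p\le2$ and correctly evaluate the Beta/Gamma quotient for $p>2$, both of which hinge on the borderline integrability exponent $-\frac2p$ at $\theta=\frac\pi2$. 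All displayed identities should be rechecked against the normalizations fixed in Definitions \ref{def:K_p} and \ref{def:E_p}.
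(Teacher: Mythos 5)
Your proof is correct, but note that the paper itself does not prove this lemma at all: it is imported verbatim from Watanabe \cite[Lemma 2]{nabe14}, so there is no in-paper argument to compare against. Your self-contained derivation checks out on every point. The endpoint $Q_p(0)=1$ is immediate since both integrands reduce to $|\cos\theta|^{1-\frac{2}{p}}$ at $q=0$ (integrable because $1-\frac2p>-1$). The sign pattern $\E_{1,p}'(q)<0<\K_{1,p}'(q)$ follows from the differentiation formulas you wrote, and the quotient rule then gives $Q_p'<0$ on $(0,1)$, which together with continuity yields strict decrease on all of $[0,1)$ despite $Q_p'(0)=0$. For $q\uparrow1$ the split is handled correctly: monotone convergence gives $\K_{1,p}(q)\to\K_{1,p}(1)$, which is $+\infty$ exactly when $\frac2p\ge1$, i.e.\ $p\le2$, while $\E_{1,p}(q)\to\int_0^{\pi/2}(\cos\theta)^{2-\frac2p}d\theta<\infty$ always, so $Q_p(1)=-1$ in that range. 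For $p>2$ your Gamma-quotient evaluation is right: with $s=-\frac2p>-1$ and $s=2-\frac2p$ one gets $\E_{1,p}(1)/\K_{1,p}(1)=\bigl(\tfrac12-\tfrac1p\bigr)/\bigl(1-\tfrac1p\bigr)=\tfrac{p-2}{2(p-1)}$ and hence $Q_p(1)=-\tfrac{1}{p-1}$; the same value can be obtained more elementarily by integrating $\cos^{2-\frac2p}\theta=\cos^{1-\frac2p}\theta\cdot\cos\theta$ by parts, which yields $\E_{1,p}(1)=(1-\tfrac2p)(\K_{1,p}(1)-\E_{1,p}(1))$ without invoking the Beta function. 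Either way, your argument is a complete and valid proof of the cited result.
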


By Lemma~\ref{lem:nabe-lem2}, for each $p\in (1, \infty)$ there exists a unique solution $q\in(0,1)$ of 
\begin{align}\label{eq:2E-K}
2 \frac{\E_{1,p}(q)}{\K_{1,p}(q)}-1 =0.
\end{align}
Using this root, we now extend the notion of the classical figure-eight elastica.

\begin{definition}[Figure-eight $p$-elastica]
For each $p\in(1, \infty)$, let $q^*=q^*(p)\in (0,1)$ be the unique solution of \eqref{eq:2E-K}.
We say that a planar curve $\gamma$ is a \emph{figure-eight $p$-elastica}
if up to similarity and reparametrization $\gamma$ is represented by $\gamma(s)=\gamma_w(s+s_0, q^*)$ with some $s_0\in \R$.
\end{definition}

We also introduce the term ``$N$-fold'' as follows.

\begin{definition}[$N$-fold $p$-elastica] \label{def:N-fold}
Let $p\in(1,\infty)$ and $N\in \N$.
\begin{itemize}
    \item[(i)] Let $q^* = q^*(p) \in(0,1)$ be the unique solution of \eqref{eq:2E-K}.
    We call $\gamma$ an \emph{$N$-fold closed figure-eight $p$-elastica} if, up to similarity and reparametrization, the curve $\gamma$ is represented by
    \[ [0, 4N\Kcn(q^*)] \ni s \mapsto \gamma_w(s+s_0, q^*) \quad\text{for some $s_0 \in \R$.} \]
    \item[(ii)] We call $\gamma$ an \emph{$N$-fold circular $p$-elastica}, or \emph{$N$-fold circle} as usual, if up to similarity and reparametrization, the curve $\gamma$ is represented by
    \[ [0, 2N\pi] \ni s \mapsto \gamma_c(s+s_0) \quad \text{for some $s_0 \in \R$.}  \]
\end{itemize}
\end{definition}

\begin{remark}\label{rem:condition_fold}

Here in order to characterize $N$-fold $p$-elasticae we employ the \emph{length} of the curve \emph{after} dilation, such as $4N\Kcn(q^*)$.
If $\gamma:[0,L]\to \R^2$ is an arclength parametrized $N$-fold closed figure-eight $p$-elastica, then using the scaling factor $\Lambda>0$ we can characterize the signed curvature $k$ of $\gamma$ as 
\begin{align} \label{Nfold_8}
        k(s)= 2\Lambda q^*(p) \cn_p \big( \Lambda (s+s_0), q^*(p) \big), \quad \Lambda=\frac{4N \Kcn( q^*(p) )}{L}, 
\end{align}
where $s$ denotes the arclength parameter of the original $\gamma$ (before dilation).
Note that \eqref{Nfold_8} coincides with the definition in \cite[Definition 2.3]{Miura_LiYau}.

\end{remark}

By definition of $q^*$ and the periodicity of $\gamma_w$ it follows that any $N$-fold figure-eight $p$-elastica is indeed closed, the proof of which can be safely omitted.

\begin{proposition}\label{prop:8_is_closed}
Let $q^* \in(0,1)$ be the unique solution of \eqref{eq:2E-K}.
Let $N\in \N$ and $s_0\in\R$ be arbitrary.
Let $\gamma:[0,4N\Kcn(q^*)]\to \R^2$ be defined by
\[\gamma(s):=\gamma_w(s+s_0, q^*), \quad s\in [0,4N\Kcn(q^*)]. \]
Then, for each integer $n\in [1 ,N]$,
\begin{align*}
\gamma(0)=\gamma(4n\Kcn(q^*)), \quad \gamma'(0)=\gamma'(4n\Kcn(q^*)).
\end{align*}
\end{proposition}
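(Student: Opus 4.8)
The plan is to reduce the statement to the assertion that the profile map $\gamma_w(\cdot,q^*)$ is fully $4\Kcn(q^*)$-periodic on $\R$; once this is known, the conclusion for each $n\in[1,N]$ is immediate, since $\gamma(4n\Kcn(q^*))=\gamma_w(s_0+4n\Kcn(q^*),q^*)=\gamma_w(s_0,q^*)=\gamma(0)$ and likewise for the derivative. Recall from the construction of $\gamma_w$ in the proof of Theorem \ref{thm:Formulae_1} (Case II) that $\gamma_w'(s,q^*)=(\cos\theta_w(s),\sin\theta_w(s))$ with $\theta_w(s)=2\arcsin(q^*\sn_p(s,q^*))$, and that the two components of $\gamma_w$ are $X_w(s)=2\Ecn(\amcn(s,q^*),q^*)-s$ and $Y_w(s)=-q^*\frac{p}{p-1}|\cn_p(s,q^*)|^{p-2}\cn_p(s,q^*)$ (up to the fixed translation $v_w$, which is irrelevant to periodicity).

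First I would dispose of the parts that hold for \emph{any} modulus. By Proposition \ref{prop:property-cndn}(i)--(ii), both $\sn_p(\cdot,q^*)$ and $\cn_p(\cdot,q^*)$ are $2\Kcn(q^*)$-antiperiodic, hence $4\Kcn(q^*)$-periodic. Consequently $\theta_w$ is $4\Kcn(q^*)$-periodic, which yields $\gamma_w'(s_0+4n\Kcn(q^*),q^*)=\gamma_w'(s_0,q^*)$ and settles the tangent condition $\gamma'(4n\Kcn(q^*))=\gamma'(0)$. Since $|\cn_p|^{p-2}\cn_p$ inherits the $4\Kcn(q^*)$-periodicity of $\cn_p$, the function $Y_w$ is also $4\Kcn(q^*)$-periodic, so the $y$-coordinate of the position already matches.

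The only genuinely nontrivial point is the $x$-coordinate, where the defining property of $q^*$ enters. Here $X_w$ is not periodic because of the linear term $-s$ and the growth of $\Ecn$, so I would instead compute the increment over one period. Using the identity $\amcn(s+2n\Kcn(q),q)=\amcn(s,q)+n\pi$ (established in the proof of Proposition \ref{prop:property-cndn}) with $n=2$, together with the quasi-periodicity $\Ecn(x+n\pi,q)=\Ecn(x,q)+2n\Ecn(q)$ from \eqref{eq:period_Ecn}, I obtain
\[
X_w(s+4\Kcn(q^*))=2\Ecn(\amcn(s,q^*),q^*)+8\Ecn(q^*)-s-4\Kcn(q^*)=X_w(s)+8\Ecn(q^*)-4\Kcn(q^*).
\]
The increment $8\Ecn(q^*)-4\Kcn(q^*)$ vanishes precisely when $2\Ecn(q^*)/\Kcn(q^*)-1=0$, which is exactly the equation \eqref{eq:2E-K} defining $q^*$. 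Thus $X_w$ is $4\Kcn(q^*)$-periodic for this modulus, completing the proof that $\gamma_w(\cdot,q^*)$ has full period $4\Kcn(q^*)$ and hence yielding $\gamma(4n\Kcn(q^*))=\gamma(0)$ for every $n\in[1,N]$. The main obstacle is entirely concentrated in this last computation: one must carefully track the quasi-periodic increments of $\amcn$ and $\Ecn$ and recognize that their net contribution is governed by the relation characterizing $q^*$; the remaining periodicities are routine consequences of Proposition \ref{prop:property-cndn}.
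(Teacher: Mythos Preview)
Your proof is correct and is precisely the argument the paper has in mind: the authors omit the proof of this proposition, remarking only that it follows ``by definition of $q^*$ and the periodicity of $\gamma_w$,'' and the same quasi-periodicity computation you carry out (via \eqref{eq:period_Ecn} and the shift identity for $\amcn$) appears in mirror form in their proof of Theorem~\ref{thm:classify-closed}, Case~II.
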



We are in a position to state the main theorem in this section. 

\begin{theorem}[Classification of closed $p$-elasticae]\label{thm:classify-closed}
Let $1<p<\infty$.
Then $\gamma$ is a closed planar $p$-elastica if and only if the curve $\gamma$ is either
an $N$-fold circle or an $N$-fold figure-eight $p$-elastica for some $N \in \N$.
\end{theorem}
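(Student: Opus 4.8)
The plan is to combine the complete classification of Theorems \ref{thm:Formulae_1} and \ref{thm:Formulae_2} with a case-by-case closure analysis, reducing everything to the net translation of each profile over one period of its tangential angle. The ``if'' direction is immediate: an $N$-fold circle is obviously closed, and an $N$-fold figure-eight $p$-elastica is closed by Proposition \ref{prop:8_is_closed}. For the ``only if'' direction, let $\gamma$ be a closed $p$-elastica; up to similarity and reparameterization it coincides with one of the profiles $\gamma_\ell,\gamma_w,\gamma_b,\gamma_o,\gamma_c$ (or $\gamma_f$ when $p>2$). I would then exclude the linear, borderline, orbitlike, and flat-core profiles, and show that the wavelike profile closes exactly when $q=q^*(p)$ (yielding the figure-eight) while the circular profile gives the $N$-fold circle.

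The elementary exclusions come first. The profile $\gamma_\ell$ is an unbounded straight line, hence not closed, and $\gamma_c$ is the circle, closing after length $2\pi$. For the borderline profile with $p\le2$, the tangential angle $\theta_b(s)=2\am_{1,p}(s,1)$ is strictly increasing with range $(-\pi,\pi)$, so the closure requirement $\theta_b(L)\in 2\pi\Z$ forces $L=0$; thus $\gamma_b$ never closes. For the wavelike profile, $\theta_w\in(-\pi,\pi)$ is $4\Kcn(q)$-periodic and $Y_w$ is $4\Kcn(q)$-periodic; matching both the tangent ($\theta_w(L)=0$, so $L\in 2\Kcn(q)\Z$) and the $y$-coordinate (which forces an even multiple) shows that closure requires a full number of periods $L=4N\Kcn(q)$. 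Along such an interval the period relations \eqref{eq:period_Ecn} give the net horizontal translation
\[
X_w(4N\Kcn(q))-X_w(0)=4N\big(2\Ecn(q)-\Kcn(q)\big)=4N\Kcn(q)\,Q_p(q),
\]
which vanishes if and only if $Q_p(q)=0$, i.e.\ $q=q^*(p)$ by Lemma \ref{lem:nabe-lem2}. Hence the only closed wavelike $p$-elasticae are precisely the $N$-fold figure-eight $p$-elasticae.

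The main obstacle is excluding the orbitlike and flat-core profiles, since for these the tangent does return (the angle is monotone and gains $2\pi$ per period), so one must instead detect a strictly signed horizontal drift that prevents the position from returning. For $\gamma_o$ the tangent returns only at $L=2N\Kdn(q)$, where $Y_o$ (being $2\Kdn(q)$-periodic) also returns, and the period relations reduce the per-period horizontal translation to $\tfrac{2}{q^2}\big(2\E_{2,\frac{p}{p-1}}(q)-(2-q^2)\Kdn(q)\big)$. Rewriting the bracket as $q^2\int_0^{\pi/2}(1-q^2\sin^2\theta)^{-1/p}\cos 2\theta\,d\theta$ and applying the reflection $\theta\mapsto\tfrac{\pi}{2}-\theta$ together with the monotonicity of the weight shows this integral is strictly negative for $q\in(0,1)$, so $\gamma_o$ never closes.

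For the flat-core profile with $p>2$, the tangent equals $(-1,0)$ at every junction and the $y$-coordinate returns at each junction (each loop $\gamma_b^{\pm}$ and each segment $\gamma_\ell^{L_j}$ has zero net vertical displacement), while the horizontal coordinate strictly decreases: each straight segment contributes $-L_j\le 0$, and each loop contributes $2\big(2\tanh_p\K_p(1)-\K_p(1)\big)$, which is negative since the same reflection argument gives $\int_0^{\pi/2}\cos^{-2/p}\phi\,\cos 2\phi\,d\phi<0$. As $N\ge1$, the total horizontal displacement is strictly negative and $\gamma_f$ cannot close. The crux of the whole argument is thus the sign analysis of these elliptic-integral combinations via the reflection symmetry $\theta\mapsto\tfrac{\pi}{2}-\theta$; the remaining steps are routine bookkeeping with the period relations \eqref{eq:period_Ecn}.
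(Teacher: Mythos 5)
Your overall strategy coincides with the paper's: reduce to the profiles via Theorems \ref{thm:Formulae_1} and \ref{thm:Formulae_2}, identify the closed wavelike curves with $N$-fold figure-eights via $Q_p(q)=0$, and exclude the remaining profiles by detecting a signed horizontal drift. Your direct reflection argument $\theta\mapsto\frac{\pi}{2}-\theta$ for the strict negativity of $\int_0^{\pi/2}(1-q^2\sin^2\theta)^{-1/p}\cos2\theta\,d\theta$ and of $\int_0^{\pi/2}(\cos\phi)^{-2/p}\cos2\phi\,d\phi$ is a clean, self-contained substitute for the monotonicity statements (Lemmas \ref{lem:nabe-lem2} and \ref{lem:nabe-lem4}) that the paper imports from Watanabe; the orbitlike and borderline exclusions and the wavelike identification are otherwise essentially the paper's.

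The flat-core exclusion, however, has a genuine gap. You compute the net displacement of the full profile $\gamma_f$ from its first junction to its last and conclude that ``$\gamma_f$ cannot close.'' But by Theorem \ref{thm:Formulae_2} the closed curve is $\gamma_f(\cdot+s_0)$ restricted to its own parameter interval, i.e.\ an arbitrary sub-arc of (the canonically extended) $\gamma_f$, and its endpoints need not be junctions. Since the tangential angle of each loop $\gamma_b^{\pm}$ sweeps all of $(-\pi,\pi)$, two points lying in the interiors of two \emph{different} loops can have equal tangent vectors, so the closure conditions $\gamma_f(s_1)=\gamma_f(s_2)$ and $\gamma_f'(s_1)=\gamma_f'(s_2)$ must also be ruled out in that configuration; your junction-to-junction bookkeeping does not see it. This is precisely the case the paper treats separately, using Lemma \ref{lem:border-endpoints} (distinct loops are horizontal translates of one another by a nonzero vector) together with Corollary \ref{cor:parallel-xaxis} (injectivity of $\gamma_f$ on the flat set) for the junction--junction case. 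Your argument can be repaired either by that two-case analysis or by first cyclically shifting the closed curve so that it starts at a zero of the curvature (after disposing of the zero-free case, where the angle is monotone in $(-\pi,\pi)$ and closure is impossible), but as written the step is missing. A related, much more minor, imprecision occurs in the wavelike case: for a general $s_0$, tangent matching alone gives $\sn_p(s_0+L,q)=\sn_p(s_0,q)$, which does not by itself force $L\in2\Kcn(q)\Z$; you need to combine it with the $y$-matching $\cn_p(s_0+L,q)=\cn_p(s_0,q)$ before concluding $\amcn(s_0+L,q)-\amcn(s_0,q)\in2\pi\Z$ and hence $L=4N\Kcn(q)$.
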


In particular, all the linear, borderline, orbitlike $p$-elasticae are ruled out in Theorem \ref{thm:classify-closed} as in the case of $p=2$, and also the new flat-core $p$-elasticae can also be ruled out thanks to our formula.
In what follows we give some preparatory results.
The first one is concerning the flat-core case.

\begin{lemma}\label{lem:border-endpoints}
Let $p>2$, and $\gamma_b^\pm:[-\K_p(1),\K_p(1)]\to\R^2$ be defined by \eqref{eq:pm-border}.
Then
\begin{align}\label{eq:border-endpoints}
\gamma_b^\pm(\K_p(1)) - \gamma_b^\pm(-\K_p(1)) =-\frac{2}{p-1}\K_{p}(1)
\begin{pmatrix}
1 \\
0
\end{pmatrix}.
\end{align}
\end{lemma}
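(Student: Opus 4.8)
The plan is to evaluate the two components of $\gamma_b^\pm(\K_p(1))-\gamma_b^\pm(-\K_p(1))$ separately. For the second ($y$-) component, recall that for $p>2$ the function $\sech_p$ is the continuous zero-extension of $\cn_p(\cdot,1)=\dn_p(\cdot,1)$ outside $(-\K_p(1),\K_p(1))$, so that $\sech_p(\pm\K_p(1))=0$ by Proposition~\ref{prop:property-cndn}~(iv-2). Hence $\mp\frac{p}{p-1}(\sech_p s)^{p-1}$ vanishes at both endpoints and the second entry of the difference is $0$; in particular the sign $\pm$ is irrelevant to the statement, and it remains only to treat the first ($x$-) component.

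For the $x$-component I would use that $\sech_p$ is even, so $(\sech_p t)^p$ is even and hence $\tanh_p$ is odd, giving $\tanh_p(-\K_p(1))=-\tanh_p(\K_p(1))$. A direct evaluation then yields
\[
\big[\,2\tanh_p s - s\,\big]_{s=-\K_p(1)}^{s=\K_p(1)} = 4\tanh_p(\K_p(1)) - 2\K_p(1),
\]
so the whole lemma reduces to the single identity $\tanh_p(\K_p(1))=\frac{p-2}{2(p-1)}\K_p(1)$; substituting it gives exactly $-\frac{2}{p-1}\K_p(1)$, as claimed.

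To establish this identity --- the heart of the matter --- I would compute $\tanh_p(\K_p(1))=\int_0^{\K_p(1)}(\sech_p t)^p\,dt$ via the change of variables $\phi=\amcn(t,1)$. Using $\frac{d}{dt}\amcn(t,1)=(\cos\amcn(t,1))^{\frac{2}{p}}$ and $\sech_p t=(\cos\amcn(t,1))^{\frac{2}{p}}$ (both recorded in the proof of Case III of Theorem~\ref{thm:Formulae_1}), together with $\amcn(0,1)=0$ and $\amcn(\K_p(1),1)=\frac{\pi}{2}$, this turns the integral into $\int_0^{\pi/2}(\cos\phi)^{2-\frac{2}{p}}\,d\phi$, while by definition $\K_p(1)=\int_0^{\pi/2}(\cos\phi)^{-\frac{2}{p}}\,d\phi$. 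The two are linked by the Wallis reduction formula $\int_0^{\pi/2}(\cos\phi)^a\,d\phi=\frac{a-1}{a}\int_0^{\pi/2}(\cos\phi)^{a-2}\,d\phi$; taking $a=2-\frac{2}{p}$ produces the factor $\frac{a-1}{a}=\frac{p-2}{2(p-1)}$ and hence the desired identity. The only nontrivial point is the vanishing of the boundary term $[(\cos\phi)^{a-1}\sin\phi]_0^{\pi/2}$ in the integration by parts underlying this reduction, and this is exactly where the hypothesis $p>2$ enters: it ensures $a-1=1-\frac{2}{p}>0$, so that $(\cos\phi)^{a-1}\to0$ as $\phi\uparrow\frac{\pi}{2}$. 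All remaining steps are routine substitutions.
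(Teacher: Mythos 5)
Your proof is correct, and its overall structure coincides with the paper's: both arguments kill the second component via $\sech_p(\pm\K_p(1))=0$, use the oddness of $\tanh_p$ to reduce the first component to $4\tanh_p(\K_p(1))-2\K_p(1)$, and compute $\tanh_p(\K_p(1))=\int_0^{\pi/2}(\cos\phi)^{2-\frac{2}{p}}\,d\phi$ by the substitution $\phi=\amcn(t,1)$. The two proofs diverge only at the last step. The paper recognizes this integral as the complete $p$-elliptic integral $\E_{1,p}(1)$ and then invokes Lemma~\ref{lem:nabe-lem2} (Watanabe's monotonicity lemma), whose endpoint value $Q_p(1)=-\frac{1}{p-1}$ for $p>2$ gives $4\E_{1,p}(1)-2\K_p(1)=2Q_p(1)\K_p(1)=-\frac{2}{p-1}\K_p(1)$. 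You instead derive the needed identity $\int_0^{\pi/2}(\cos\phi)^{2-\frac{2}{p}}\,d\phi=\frac{p-2}{2(p-1)}\int_0^{\pi/2}(\cos\phi)^{-\frac{2}{p}}\,d\phi$ directly by the Wallis-type integration by parts, correctly noting that the boundary term $[(\cos\phi)^{1-\frac{2}{p}}\sin\phi]_0^{\pi/2}$ vanishes precisely because $p>2$. Your route is more elementary and self-contained (it re-proves the $q=1$ value of $Q_p$ rather than citing it) and makes transparent where the hypothesis $p>2$ is used; the paper's route is shorter here and ties the computation to the function $Q_p$, which it needs anyway for the closure analysis of wavelike $p$-elasticae. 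Both are valid.
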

\begin{proof}
Since $\tanh_p$ is an odd function (cf.\ Definition~\ref{def:tan_p}) and since $\sech_p (\pm\K_p(1)) =0$, we deduce from \eqref{eq:pm-border} that 
\begin{align}
\label{eq:endpoints-loop}
\gamma_b^\pm(\K_p(1)) - \gamma_b^\pm(-\K_p(1)) =
\begin{pmatrix}
4 \tanh_p (\K_p(1)) -2\K_p(1) \\
0
\end{pmatrix}.
\end{align}
Letting $\xi=\amcn(\pm\K_p(1), 1)$ and using $\amcn(\K_p(1),1)=\pi/2$, we compute
\begin{align*}
\tanh_p (\K_p(1)) 
&= \int_0^{\K_p(1)} (\sech_p t)^p \,dt 
= \int_0^{\K_p(1)} \cos^2 \big( \amcn(t,1) \big) \,dt \\
&= \int_0^{\amcn(\K_p(1), 1)} \cos^2\xi \frac{d\xi}{\cos^{\frac{2}{p}}\xi }
= \E_{1,p}(1).
\end{align*}
Thus, combining \eqref{eq:endpoints-loop} with the fact that  
\begin{align*}
\gamma_b^\pm(\K_p(1)) - \gamma_b^\pm(-\K_p(1)) 
&= 4\E_{1,p}(1) -2 \K_{p}(1) 
= 2 Q_p(1) \K_{p}(1)
= -\frac{2}{p-1}\K_{p}(1),    
\end{align*}
we obtain \eqref{eq:border-endpoints}.
The proof is complete.
\end{proof}

In particular, Lemma~\ref{lem:border-endpoints} ensures that after one loop $\gamma_b^\pm$ the point moves leftward.
This fact combined with formula \eqref{eq:EP3-2} immediately implies the following characterization of the flat part: 

\begin{corollary}[Flat part on the $x$-axis]\label{cor:parallel-xaxis}
Let $p>2$.
Let $\gamma_f:[0,L]\to\R^2$ be of the form \eqref{eq:EP3-2}, and
$
Z:=\Set{s\in[0,L] | k_f(s)=0 },
$
where $k_f$ denotes the signed curvature of $\gamma_f$.
Then
$
    \gamma_f(Z) \subset \Set{(x_1, x_2) \in \R^2 | x_2=0}. 
$
Moreover, the map $Z\to\gamma_f(Z)$ defined by $s\mapsto \gamma_f(s)$ is injective.
\end{corollary}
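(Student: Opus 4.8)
The plan is to reduce everything to the two building blocks of the concatenation \eqref{eq:EP3-2}, the linear pieces $\gamma_\ell^{L_j}$ and the loops $\gamma_b^{\sigma_j}$, and to track the two coordinates separately. First I would invoke the curvature formula \eqref{eq:0307-2}: since the shifted copies of $\sech_p$ have mutually disjoint supports contained in the open loop interiors, $k_f$ vanishes exactly off these interiors, so $Z$ is precisely the union of the linear pieces together with the loop endpoints.

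For the inclusion $\gamma_f(Z)\subset\{x_2=0\}$ the essential point is that each block begins and ends on its own horizontal baseline with the \emph{same} tangent direction $(-1,0)$. Indeed $\gamma_\ell^{L_j}(s)=(-s,0)$ is horizontal throughout, while for the loops the identity $\sech_p(\pm\K_p(1))=0$ forces the second component of $\gamma_b^\pm$ in \eqref{eq:pm-border} to vanish at $s=\pm\K_p(1)$; moreover $\theta_b^\pm(\pm\K_p(1))=\pm\pi$ (from $\amcn(\pm\K_p(1),1)=\pm\tfrac\pi2$) gives $(\gamma_b^\pm)'(\pm\K_p(1))=(-1,0)$. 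Thus every block has vanishing net vertical displacement and places both of its junction points at height $0$ in its own frame. Since $\oplus$ acts only by translation (never rotation) and consecutive tangents match $(-1,0)$ to $(-1,0)$, I would argue by induction on the blocks, starting from $\gamma_\ell^{L_1}(0)=(0,0)$, that every gluing translation has zero second component; hence all linear pieces and all loop endpoints sit on $\{x_2=0\}$, which is exactly $\gamma_f(Z)$.

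Since the image already lies on the line $\{x_2=0\}$, injectivity of $s\mapsto\gamma_f(s)$ on $Z$ is equivalent to strict monotonicity of the first coordinate $s\mapsto(\gamma_f(s))_1$ on $Z$. On each linear piece this coordinate equals $-s+\mathrm{const}$ and strictly decreases. Across each loop, Lemma~\ref{lem:border-endpoints} gives $\gamma_b^\pm(\K_p(1))-\gamma_b^\pm(-\K_p(1))=-\tfrac{2}{p-1}\K_p(1)(1,0)$, so the terminal junction of every loop lies strictly to the left of its initial junction by the positive amount $\tfrac{2}{p-1}\K_p(1)$ (the loop interior, not belonging to $Z$, being skipped). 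Ordering the points of $Z$ by increasing $s$, the first coordinate therefore strictly decreases across every linear piece and across every loop, so $\gamma_f|_Z$ is injective.

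The bookkeeping of the concatenation is the only delicate point, and the two structural facts that make it go through are the vanishing net vertical displacement of each loop (forced by $\sech_p(\pm\K_p(1))=0$) and its strictly negative net horizontal displacement (Lemma~\ref{lem:border-endpoints}); the tangent-matching $(\gamma_b^\pm)'(\pm\K_p(1))=(-1,0)$ is what guarantees that each gluing is a pure horizontal translation, so that these local properties propagate globally.
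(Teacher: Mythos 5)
Your argument is correct and is precisely the bookkeeping the paper leaves implicit: the corollary is stated there as an immediate consequence of Lemma~\ref{lem:border-endpoints} combined with formula \eqref{eq:EP3-2}, and your proof supplies exactly the intended details (each block's junction points sit at height zero, so every gluing translation is horizontal and $\gamma_f(Z)\subset\{x_2=0\}$; strict decrease of the first coordinate on the linear pieces plus the strict leftward shift $-\frac{2}{p-1}\K_p(1)$ across each loop gives injectivity). One cosmetic remark: the operation $\oplus$ is a translation by definition, so the tangent-matching $(\gamma_b^\pm)'(\pm\K_p(1))=(-1,0)$ is needed only to make $\gamma_f$ a genuine $W^{2,p}$ (arclength) curve, not to exclude rotations in the gluing.
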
 

Now we turn to preparation for ruling out the orbitlike case.
To this end, we recall the following monotonicity obtained in \cite{nabe14}:

\begin{lemma}[{\cite[Lemma 4]{nabe14}}]\label{lem:nabe-lem4}
Let $p\in(1,\infty)$ and $X_{2,p}:(0,1)\to \R$ be the function given by
\begin{align} 
X_{2,p}(q):=&\, 
\frac{2}{q^2}\mathrm{E}_{2, \frac{p}{p-1}}(q) + \left(1-\frac{2}{q^2}\right) \Kdn(q) \label{eq:def-X_2,p}\\
=&\, 
\int_0^{\frac{\pi}{2}} \frac{\cos 2\theta}{\sqrt[p]{1-q^2\sin^2\theta}}\,d\theta, 
\quad q\in(0,1). \notag 
\end{align}
Then $X_{2,p}(0+)=0$ and $X_{2,p}$ is strictly decreasing on $[0,1)$. In particular, $X_{2,p}(q)<0$ for each $q\in (0,1)$.
\end{lemma}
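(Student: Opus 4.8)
The plan is to take the integral representation as the working definition and reduce the whole statement to a single sign computation for the derivative. First I would verify that the two displayed formulas for $X_{2,p}(q)$ coincide. Writing $\cos 2\theta = 1 - 2\sin^2\theta$ and using $\sin^2\theta = q^{-2}\big(1-(1-q^2\sin^2\theta)\big)$, one obtains the pointwise identity
\[
\cos 2\theta = \Big(1-\tfrac{2}{q^2}\Big) + \tfrac{2}{q^2}\big(1-q^2\sin^2\theta\big).
\]
Dividing by $(1-q^2\sin^2\theta)^{1/p}$ and integrating over $[0,\tfrac{\pi}{2}]$, the two resulting integrals are exactly $\Kdn(q)=\int_0^{\pi/2}(1-q^2\sin^2\theta)^{-1/p}\,d\theta$ and $\mathrm{E}_{2,\frac{p}{p-1}}(q)=\int_0^{\pi/2}(1-q^2\sin^2\theta)^{(p-1)/p}\,d\theta$, which reproduces the first formula. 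This is a routine rearrangement.

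Next I would compute the boundary value $X_{2,p}(0+)$. For $q$ in a compact subinterval of $[0,1)$ the integrand $\cos 2\theta\,(1-q^2\sin^2\theta)^{-1/p}$ is uniformly bounded, so dominated convergence (equivalently, continuity of the integral in $q$) gives $X_{2,p}(0+)=\int_0^{\pi/2}\cos 2\theta\,d\theta=\big[\tfrac{1}{2}\sin 2\theta\big]_0^{\pi/2}=0$.

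The core step is the monotonicity. Differentiating under the integral sign, justified by uniform boundedness of the $q$-derivative on compact subintervals of $[0,1)$, I obtain
\[
X_{2,p}'(q)=\frac{2q}{p}\int_0^{\pi/2}\cos 2\theta\,w(\theta)\,d\theta,\qquad w(\theta):=\sin^2\theta\,(1-q^2\sin^2\theta)^{-\frac{p+1}{p}}.
\]
Substituting $u=\sin^2\theta$ shows that $g(u):=u(1-q^2u)^{-\frac{p+1}{p}}$ has positive derivative, so $w$ is a strictly increasing nonnegative function of $\theta$ on $(0,\tfrac{\pi}{2})$. Since $\int_0^{\pi/2}\cos 2\theta\,d\theta=0$, I may subtract the constant $w(\tfrac{\pi}{4})$ and rewrite the integral as $\int_0^{\pi/2}\cos 2\theta\,\big(w(\theta)-w(\tfrac{\pi}{4})\big)\,d\theta$. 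On $(0,\tfrac{\pi}{4})$ one has $\cos 2\theta>0$ while $w(\theta)-w(\tfrac{\pi}{4})<0$ by monotonicity, so the integrand is negative; on $(\tfrac{\pi}{4},\tfrac{\pi}{2})$ both signs flip and the integrand is again negative. Hence the integral is strictly negative and $X_{2,p}'(q)<0$ for every $q\in(0,1)$. Combined with $X_{2,p}(0+)=0$ and continuity, this shows $X_{2,p}$ is strictly decreasing on $[0,1)$, and therefore $X_{2,p}(q)<X_{2,p}(0+)=0$ for all $q\in(0,1)$.

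The only delicate point is the sign of the derivative integral; everything else is bookkeeping. The monotone-weight trick built on $\int_0^{\pi/2}\cos 2\theta\,d\theta=0$ is what makes it clean, converting the sign question into the elementary monotonicity of $u\mapsto u(1-q^2u)^{-\frac{p+1}{p}}$.
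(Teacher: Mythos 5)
Your proof is correct and follows essentially the same route as the paper's: differentiate under the integral sign to get $X_{2,p}'(q)=\frac{2q}{p}\int_0^{\pi/2}\cos 2\theta\,\sin^2\theta\,(1-q^2\sin^2\theta)^{-1-\frac1p}\,d\theta$ and conclude negativity from the strict monotonicity of $t\mapsto t(1-q^2t)^{-1-\frac1p}$. The only cosmetic difference is in the last step: the paper folds the integral about $\theta=\pi/4$ via $\phi=\pi/2-\theta$ and compares $\sin^2\theta$ with $\cos^2\theta$, whereas you subtract the value $w(\pi/4)$ using $\int_0^{\pi/2}\cos 2\theta\,d\theta=0$; these are equivalent ways of exploiting that the weight is increasing against the sign change of $\cos 2\theta$.
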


We are now ready to classify closed planar $p$-elasticae.

\begin{proof}[Proof of Theorem~\ref{thm:classify-closed}]

Thanks to Theorems \ref{thm:Formulae_1} and \ref{thm:Formulae_2}, up to similarity and reparametrization, it suffices to consider $\gamma(s)=\gamma_*(s+s_0)$ with some $s_0\in \R$,  
where $\gamma_*$ is $\gamma_{\ell}$, $\gamma_c$, $\gamma_w$, $\gamma_o$, $\gamma_b$, or $\gamma_f$.
Write $\mathcal{L}[\gamma]=L$.
Since it is trivial that $\gamma_\ell$ cannot be closed, and $\gamma_c$ makes $N$-fold circles, we only argue for the other cases.

\smallskip



\textbf{Case II} (\textsl{Wavelike $p$-elastica})\textbf{.} 
As in \eqref{eq:EP2}, let us consider 
    \begin{equation*}
      \gamma(s,q) =\gamma_w(s+s_0,q)=
      \begin{pmatrix}
      2 \Ecn(\am_{1,p}(s+s_0,q),q )-  (s+s_0)  \\
      -q\frac{p}{p-1}|\cn_p(s+s_0,q)|^{p-2}\cn_p(s+s_0,q)
      \end{pmatrix},
    \quad s\in [0, L]
    \end{equation*}
with some $q\in [0,1)$ and $s_0\in \R$.
The tangential angle $\theta$ of $\gamma$ and the signed curvature $k$ of $\gamma$ are given by
\begin{align*}
\theta(s)&=\theta_w(s+s_0)=2\arcsin(q\sn_p(s+s_0,q)), \\
k(s)&= k_w(s+s_0)= 2q\cn_p(s+s_0,q),
\end{align*}
where $\theta_w$ and $k_w$ are defined in Theorem~\ref{thm:Formulae_1}.
By the change of variables $s=s(x)=\Fcn(x, q)-s_0$, we have
    \begin{gather*} 
      \hat{\gamma} (x,q) :=\gamma(s(x), q)=
      \begin{pmatrix}
      2 \Ecn(x,q) - \Fcn(x,q)  \\
      -q\frac{p}{p-1}|\cos x|^{1-\frac{2}{p}}\cos x
      \end{pmatrix}, \\
\hat{\theta} (x):=\theta(s(x))= 2\arcsin (q\sin x ), \quad \hat{k} (x):=k(s(x))=2q|\cos x|^{\frac{2}{p}-1}\cos x. \notag
    \end{gather*}
Let $x_0$, $x_1$ be the parameters which correspond to endpoints of $\hat{\gamma}$, i.e., 
\[
x_0:=\Fcn^{-1}(s_0, q) =\amcn(s_0, q), \quad x_1:=\Fcn^{-1}(s_0+L, q) =\amcn(s_0+L, q).
\]
Then it is necessary for $\hat{\gamma} (\cdot,q)$ to be a closed curve that 
\begin{gather} 
     2 \Ecn(x_0,q) - \Fcn(x_0,q) = 2 \Ecn(x_1,q) - \Fcn(x_1,q) , \label{eq:1211-1} \\ 
    -q\frac{p}{p-1}|\cos x_0|^{1-\frac{2}{p}}\cos x_0 = -q\frac{p}{p-1}|\cos x_1|^{1-\frac{2}{p}}\cos x_1, \label{eq:1211-2} \\
    \hat{\theta} (x_0) - \hat{\theta} (x_1) \in 2\pi \Z. \label{eq:1211-3} 
\end{gather}
Here \eqref{eq:1211-1} and \eqref{eq:1211-2} come from $\hat{\gamma} (x_0,q) = \hat{\gamma} (x_1,q)$.

Let us show that \eqref{eq:1211-1}, \eqref{eq:1211-2}, and \eqref{eq:1211-3} are equivalent to 
\begin{align}\label{eq:1211-5}
q=q^*(p) \quad\text{and} \quad  
x_1 = x_0 +2N\pi  \ \ \text{for some} \ \ N \in \N.
\end{align}
Indeed, since $\hat{\theta} (\R) \subset (-\pi, \pi)$, we infer from \eqref{eq:1211-3} that $\hat{\theta} (x_0) = \hat{\theta} (x_1)$. 
This together with definition of $\hat{\theta} $ implies that $\sin x_0 = \sin x_1$.
Therefore, \eqref{eq:1211-2} and \eqref{eq:1211-3} hold if and only if 
$x_1 = x_0 + 2N \pi $ for some $N \in \N$.
Moreover, using \eqref{eq:period_Ecn} we get for $x_1 = x_0 + 2N\pi$
\begin{align*}
&\big( 2 \Ecn(x_1,q) - \Fcn(x_1,q) \big) - \big( 2 \Ecn(x_0,q) - \Fcn(x_0,q) \big) \\
=& 2N \big( 2 \Ecn(q) - \K_{1,p}(q) \big).
\end{align*}
This together with Lemma~\ref{lem:nabe-lem2} implies that \eqref{eq:1211-1} holds if and only if $q=q^*(p)$.
Therefore we obtain \eqref{eq:1211-5} as an equivalent condition.

Furthermore, it follows from \eqref{eq:1211-5} that 
\begin{align*} 
    k (s) = 2q^*(p) \cn_p (s+s_0, q^*(p)).
\end{align*}
Combining \eqref{eq:period_Ecn} with \eqref{eq:1211-5}, we notice that the length $L$ of $\gamma $ satisfies 
\begin{align*}
\begin{split}
L&= \Fcn(x_1, q^*(p)) -  \Fcn(x_0, q^*(p)) \\
&= \Fcn(x_0+2N\pi, q^*(p)) -  \Fcn(x_0, q^*(p)) = 4N\Kcn(q^*(p)).
\end{split}
\end{align*}
Therefore, if $\gamma$ is a closed wavelike $p$-elastica, then $\gamma$ is an $N$-fold figure-eight $p$-elastica for some $N\in \N$.
Conversely, by Proposition~\ref{prop:8_is_closed}, the endpoints of $N$-fold figure-eight $p$-elasticae for all $N\in \N$ agree up to the first order.

\smallskip

\textbf{Case III} (\textsl{Borderline $p$-elastica: $1<p\leq2$})\textbf{.}
In this case, by Theorem~\ref{thm:Formulae_1}, the tangential angle function of $\gamma_b$ is given by $\theta_b(s)=2\amcn(s,1)=2\amdn(s,1)$, so that $\theta_b$ is strictly monotone and its range is $(-\pi, \pi)$.
Hence there are no points $s_1\neq s_2$ satisfying $\gamma'(s_1) = \gamma'(s_2)$.
Therefore, we conclude that any borderline $p$-elasticae cannot be closed.

\smallskip

\textbf{Case III'} (\textsl{Flat-core $p$-elastica: $p>2$})\textbf{.}
We prove that also this case does not contain any closed curve.
It is sufficient to show that any $\gamma_f:[0,L]\to\R^2$ of the form \eqref{eq:EP3-2} does not admit distinct points $s_1\neq s_2$ such that $\gamma_f(s_1)=\gamma_f(s_2)$ and $\gamma_f'(s_1)=\gamma_f'(s_2)$.
We argue by contradiction so suppose that such points exist.
Denote the connected components of $[0,L]\setminus Z$ by $J_j:=(a_j,a_j+2\K_p(1))$, $j=1,\dots,N$, where $Z$ is defined in Corollary \ref{cor:parallel-xaxis}; then any $\gamma_f|_{J_j}$ coincides with one of $\gamma_b^\pm|_{(-\K_p(1), \K_p(1))}$ up to translation and shifting the parameter.
Note that their tangential angle functions $\theta_b^\pm=\pm2\amcn(s,1)=\pm2\amdn(s,1)$ are strictly monotone and their ranges are $(-\pi,\pi)$ (in particular, the tangent directions cannot be leftward).
Therefore, by $\gamma_f'(s_1)=\gamma_f'(s_2)$ (and $s_1\neq s_2$), we deduce that there are only two possibilities;
\begin{itemize}
    \item[(i)] $s_1,s_2\in Z$, or
    \item[(ii)] $s_1\in J_{j_1}$ and $s_2\in J_{j_2}$ with $j_1\neq j_2$.
\end{itemize}
However neither of the two case is possible.
Indeed, concerning the former case (i), the injectivity in Corollary \ref{cor:parallel-xaxis} (with $s_1\neq s_2$) yields $\gamma_f(s_1)\neq\gamma_f(s_2)$.
Concerning the latter case (ii), by Lemma \ref{lem:border-endpoints} we deduce that there is some $r\neq0$ such that
\[
\gamma_f(a_{j_2}+s) = \gamma_f(a_{j_1}+s) -
\begin{pmatrix}
r \\ 0
\end{pmatrix}
\qquad (0\leq s\leq 2\K_p(1)),
\]
and hence $\gamma_f(s_1)\neq\gamma_f(s_2)$.
This is a contradiction.

\smallskip

\textbf{Case IV} (\textsl{Orbitlike $p$-elastica})\textbf{.} 
We shall show that there is no closed orbitlike $p$-elastica by contradiction, so assume that there exists an orbitlike $p$-elastica $\gamma:[0,L]\to\R^2$ such that $\gamma(0) = \gamma(L)$ and $\gamma'(0) = \gamma'(L)$.
By \eqref{eq:EP4}, we have $\gamma(s)=\gamma_o(s+s_0, q)$ with some $q\in(0,1)$ and $s_0\in\R$. 
The change of variables $s=s(x)=\Fdn(x,q)-s_0$ yields
\begin{equation*}
\hat{\gamma}(x,q) :=\gamma(s(x),q)= \frac{1}{q^2}
      \begin{pmatrix}
      2 \E_{2,\frac{p}{p-1}}(x,q)  + (q^2-2) \Fdn(x,q) \\
      - \frac{p}{p-1}(1-q^2\sin x)^{1-\frac{1}{p}}
      \end{pmatrix},
\end{equation*}
and then, by the formulae in Theorem~\ref{thm:Formulae_1} combined with the change of variables $s=\Fdn(x,q)-s_0$, the tangential angle function $\hat{\theta}(x):=\theta_o(s(x)+s_0)$ and the signed curvature $\hat{k}(x):=k_o(s(x)+s_0)$ are given by 
\[
\hat{\theta}(x) = 2x, \quad \hat{k}(x)=2\big( 1-q^2 \sin x \big)^{\frac{1}{p}}.
\]
Set $x_0:=\Fdn^{-1}(s_0, q)$ and $x_1:=\Fdn^{-1}(s_0+L, q)$.
Since $\gamma$ is a closed curve, we have $\hat{\gamma}(x_0)= \hat{\gamma}(x_1)$ and $\hat{\theta}(x_0)- \hat{\theta}(x_1)\in 2\pi\Z$.
From $\hat{\theta}(x_0)- \hat{\theta}(x_1)\in 2\pi\Z$ it follows that $x_1=x_0 +m\pi $ for some $m\in \N$, and this together with $\hat{\gamma}(x_0)= \hat{\gamma}(x_1)$ implies that
\begin{align*}
&\Big(\tfrac{2}{q^2} \E_{2,\frac{p}{p-1}}(x_1,q)  + (1-\tfrac{2}{q^2}) \Fdn(x_1,q) \Big)-\Big(\tfrac{2}{q^2} \E_{2,\frac{p}{p-1}}(x_0,q)  + (1-\tfrac{2}{q^2} ) \Fdn(x_0,q) \Big) \\
&=2m\Big(\tfrac{2}{q^2} \E_{2,\frac{p}{p-1}}(q) + (1-\tfrac{2}{q^2} )\Kdn(q) \Big) = 2mX_{2,p}(q) =0, 
\end{align*}
where we also used \eqref{eq:period_Ecn}
and $X_{2,p}$ is defined by \eqref{eq:def-X_2,p}.
However, this is a contradiction since $X_{2,m}(q)<0$ holds for any $q\in(0,1)$ by Lemma~\ref{lem:nabe-lem4}. 
\end{proof}


\appendix

\section{First variation of the $p$-bending energy}\label{sect:First-variation-Bp}

In this section we derive the known formula for the G\^{a}teaux derivative of the $p$-bending energy $\mathcal{B}_p$ for the reader's convenience.
A similar computation can also be found e.g.\ in \cite[Proposition A.1]{BVH}.
Let $I:=(0,1)$.
For an immersed curve $\gamma:I\to\R^n$ ($t\in I$), let $ds:=|\gamma'|dt$ be the line element in the sense of a weighted measure on $I$.
Let $\partial_s$ denote the arclength derivative along $\gamma$, i.e., $\partial_s\psi=\frac{1}{|\gamma'|}\psi'$.
In particular, the curvature vector is then represented by $$\kappa:=\partial_s^2\gamma=\frac{\gamma''}{|\gamma'|^2} - \frac{( \gamma', \gamma'' )\gamma'}{| \gamma' |^4}.$$

\begin{lemma}\label{lem:G-derivative}
Let $p\in(1,\infty)$ and $n\in\N$.
For an immersed curve $\gamma\in W_\mathrm{imm}^{2,p}(I;\R^n)$, let $\B_p$ be the $p$-bending energy defined by
$$\mathcal{B}_p[\gamma]:=\int_I|\kappa|^p ds.$$
Then the G\^ateaux derivative $d\mathcal{B}_p$ of $\B_p$ at $\gamma$ is given by, for $h \in W^{2,p}(I;\R^n)$,
\begin{align*}
\langle d\B_p[\gamma] ,  h \rangle 
    &= \int_I  \Big( (1-2p) |\kappa|^p (\partial_s\gamma, \partial_sh) +p|\kappa|^{p-2}(\kappa, \partial_s^2h)\Big)  ds.
\end{align*}
Similarly, the length functional $\mathcal{L}[\gamma]=\int_\gamma ds$ has the G\^ateaux derivative at $\gamma$ in the form of $\langle d\mathcal{L}[\gamma] ,  h \rangle = \int_I(\partial_s\gamma,\partial_sh)ds$.
\end{lemma}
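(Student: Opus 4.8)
The plan is to compute the Gâteaux derivative directly from the variation $\gamma_\varepsilon := \gamma + \varepsilon h$, by differentiating at $\varepsilon=0$ the three $\varepsilon$-dependent ingredients of $\mathcal{B}_p[\gamma_\varepsilon]=\int_I|\kappa_\varepsilon|^p\,ds_\varepsilon$: the line element $ds_\varepsilon=|\gamma_\varepsilon'|\,dt$, the arclength derivative $\partial_{s_\varepsilon}=|\gamma_\varepsilon'|^{-1}\partial_t$, and the curvature vector $\kappa_\varepsilon=\partial_{s_\varepsilon}^2\gamma_\varepsilon$. First I would record the variation of the line element. From $|\gamma_\varepsilon'|^2=|\gamma'|^2+2\varepsilon(\gamma',h')+\varepsilon^2|h'|^2$ one gets $\partial_\varepsilon|_{\varepsilon=0}|\gamma_\varepsilon'|=(\gamma',h')/|\gamma'|=(\partial_s\gamma,\partial_s h)|\gamma'|$, whence, writing $V:=(\partial_s\gamma,\partial_s h)$,
\[
\partial_\varepsilon\big|_{\varepsilon=0}\,ds_\varepsilon = V\,ds.
\]
Integrating this already yields the length formula $\langle d\mathcal{L}[\gamma],h\rangle=\int_I(\partial_s\gamma,\partial_s h)\,ds$.

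The key structural ingredient for the bending term is the commutation rule between $\partial_\varepsilon$ and $\partial_{s_\varepsilon}$. A direct computation from $\partial_\varepsilon|_0|\gamma_\varepsilon'|^{-1}=-V/|\gamma'|$ gives, at $\varepsilon=0$,
\[
[\partial_\varepsilon,\partial_s] = -V\,\partial_s
\]
as operators on any $\varepsilon$-family of functions. Applying this to $\gamma$ (with $\partial_\varepsilon\gamma_\varepsilon=h$) gives $\partial_\varepsilon(\partial_s\gamma)=\partial_s h-V\,\partial_s\gamma$, and applying it once more to $\partial_s\gamma$ yields the variation of the curvature vector,
\[
\dot\kappa := \partial_\varepsilon\big|_{\varepsilon=0}\kappa_\varepsilon = \partial_s^2 h - (\partial_s V)\,\partial_s\gamma - 2V\,\kappa.
\]

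The final step is to assemble these. Differentiating the integrand gives
\[
\partial_\varepsilon\big|_{\varepsilon=0}\big(|\kappa_\varepsilon|^p\,ds_\varepsilon\big) = \big( p|\kappa|^{p-2}(\kappa,\dot\kappa) + |\kappa|^p V \big)\,ds,
\]
and here the crucial simplification is the orthogonality $(\kappa,\partial_s\gamma)=0$, obtained by differentiating $|\partial_s\gamma|^2\equiv1$. This kills the tangential correction term in $\dot\kappa$, leaving $(\kappa,\dot\kappa)=(\kappa,\partial_s^2 h)-2V|\kappa|^2$; substituting and collecting the $V=(\partial_s\gamma,\partial_s h)$ terms produces exactly $p|\kappa|^{p-2}(\kappa,\partial_s^2 h)+(1-2p)|\kappa|^p(\partial_s\gamma,\partial_s h)$, which is the claimed expression after integration over $I$.

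I expect the main obstacle to be the careful bookkeeping of the $\varepsilon$-dependence of $\partial_{s_\varepsilon}$ through the commutator $[\partial_\varepsilon,\partial_s]=-V\partial_s$, which is precisely what generates the tangential terms in $\dot\kappa$; the orthogonality $(\kappa,\partial_s\gamma)=0$ is then what collapses the result to the stated two-term formula. A minor technical point to address is the interchange of $\partial_\varepsilon$ with $\int_I$: since $\gamma$ is an immersion the denominators $|\gamma_\varepsilon'|$ stay bounded away from $0$ for small $\varepsilon$, all quantities depend smoothly on $\varepsilon$, and the bound $|\kappa|^{p-2}|(\kappa,\partial_s^2 h)|\le|\kappa|^{p-1}|\partial_s^2 h|\in L^1(I)$ (valid for $p>1$ and $h\in W^{2,p}$) justifies differentiation under the integral by dominated convergence.
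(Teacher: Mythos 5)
Your proposal is correct and follows essentially the same route as the paper's Appendix A proof: a direct computation of $\partial_\varepsilon|_{\varepsilon=0}$ of the line element and the curvature vector for $\gamma_\varepsilon=\gamma+\varepsilon h$, followed by the cancellation of the tangential term via $(\kappa,\partial_s\gamma)=0$. Your commutator $[\partial_\varepsilon,\partial_s]=-V\partial_s$ is just a cleaner bookkeeping of the same expansion the paper carries out explicitly in the parameter $t$, and your added justification of differentiating under the integral is a welcome (if routine) supplement the paper omits.
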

\begin{proof}
We only argue for $\B_p$.
For $h \in W^{2,p}(I;\R^n)$ and small $\ve\in\R$, let
$\vc_{\ve} := \vc + \ve h \in W_\mathrm{imm}^{2,p}(I;\R^n)$.
Let $s_\varepsilon$ denote its arclength parameter, i.e., $\partial_{s_\varepsilon}\psi:=\frac{1}{|\gamma_\varepsilon'|}\psi'$ and $ds_{\varepsilon}:=|\gamma_\varepsilon'|dt$.
Let $\kappa_\varepsilon:=\partial_{s_\varepsilon}^2\gamma_\varepsilon$.
Then by a direct computation we have
$$
\frac{d}{d\ve} \B_p[\vc_{\ve}] 
= \int_0^1 |\vk_{\ve} |^p \partial_\varepsilon(ds_\varepsilon) 
+ \int_0^1 p \left| \kappa_{\ve} \right|^{p-2} 
\big( \kappa_{\ve} , \partial_{\varepsilon} \kappa_{\ve}  \big) ds,$$
where $\partial_\varepsilon$ denotes the partial derivative with respect to $\varepsilon$, namely,
\begin{align*}
\partial_\varepsilon(ds_\varepsilon)
&= \frac{ (\gamma'_{\ve} , h' )}{| \gamma'_{\ve} |} \,dt = (\partial_{s_\varepsilon}\gamma_\varepsilon,\partial_{s_\varepsilon}h) ds_\varepsilon,\\
\partial_{\ve} \kappa_{\ve}
&= \frac{h''}{|\gamma'_{\ve}|^2} - \frac{(\gamma'_{\ve}, \gamma''_{\ve})h'}{| \gamma'_{\ve} |^4} -2 (\gamma'_{\ve}, h') \left(\frac{\gamma''_{\ve}}{| \gamma'_{\ve} |^4}  - \frac{(\gamma'_{\ve}, \gamma''_{\ve}) \gamma_\varepsilon' }{| \gamma'_{\ve} |^6}\right) \\
& \qquad  - \left( \frac{( \gamma'_{\ve}, h'')+(\gamma''_{\ve}, h') }{| \gamma'_{\ve} |^4} -2 \frac{(\gamma'_{\ve}, \gamma''_{\ve}) (\gamma'_{\ve}, h') }{| \gamma'_{\ve} |^6}  \right) \gamma'_{\ve}\\
&= \partial_{s_\varepsilon}^2h -2 (\partial_{s_\varepsilon}\gamma_\varepsilon,\partial_{s_\varepsilon}h)\kappa_\varepsilon - 
 \big(\partial_{s_\varepsilon}(\partial_{s_\varepsilon}\gamma_\varepsilon,\partial_{s_\varepsilon}h) \big) \partial_{s_\varepsilon}\gamma.
\end{align*}
Using $(\kappa_\varepsilon,\partial_{s_\varepsilon}\gamma)=0$ and letting $\ve= 0$
imply the assertion.
\end{proof}

Note that for any $\gamma\in W^{2,p}_\mathrm{imm}(I;\R^n)$ the arclength function $\sigma(t):=\int_0^t|\gamma'|$ is of class $W^{2,p}$, has strictly positive derivative, and maps $\bar{I}$ to $[0,\mathcal{L}[\gamma]]$.
Hence the arclength reparametrization $\tilde{\gamma}:=\gamma\circ\sigma^{-1}$ is an element of $W^{2,p}_\mathrm{imm}(0,\mathcal{L}[\gamma];\R^n)$ with $|\tilde{\gamma}'|\equiv1$.
Therefore, applying the change of variables $s=\sigma(t)$ to Lemma \ref{lem:G-derivative}, we deduce that $\gamma$ is a $p$-elastica if and only if $\tilde{\gamma}$ satisfies \eqref{eq:0525-1} with $\eta=h\circ\sigma^{-1}\in W^{2,p}(0,L;\R^n)$ for all $h\in C^\infty_{\rm c}(I;\R^n)$, or (by approximation) equivalently for all $\eta\in C^\infty_{\rm c}(0,L;\R^n)$.

\section{Proof of optimal regularity of $\cn_p$ and $\sech_p$} \label{sect:proof-regularity-cn-sech}

\begin{proof}[Proof of Proposition \ref{thm:regularity_cn}]
Fix $q\in [0,1)$ arbitrarily.
It is easy to check the analyticity of $\cn_p(\cdot,q)$ on $\R\setminus Z_{p,q}$ 
since $\amcn(\cdot,q)$ is analytic (cf.\ the proof of Proposition~\ref{thm:regularity_dn}) and 
since the function $x\mapsto |x|^{\frac{2}{p}-1}x$ is analytic on $\R\setminus \{0\}$.
We split the rest of the proof into several steps.

\textbf{Step 1} (\textsl{Case (i)})\textbf{.}
In the case of $q\in(0,1)$, by Proposition~\ref{prop:cndn}, $k(s):=2q\cn_p(s, q)$ is a solution of \eqref{eq:EL} with $\lambda=2^{p-1}(p-1)(2q^p-q^{p-2})$.
Then $w:=|k|^{p-2}k$ is a $C^2$-solution to \eqref{eq:0610-1}.
Since $\frac{1}{p-1}=2m-1$ for some integer $m\geq1$, equation \eqref{eq:0610-1} is polynomial, namely 
$$w''+(p-1)w^{4m-1}-\lambda w^{2m-1}=0,$$
so $w$ is analytic, and hence $\cn_p(\cdot, q)=\frac{1}{2q}k=\frac{1}{2q}|w|^{\frac{2-p}{p-1}}w=\frac{1}{2q}w^{2m-1}$ is also analytic.
In the case of $q=0$, we deduce from \eqref{eq:1203-88} that $u(s):=\cn_p(s, 0)$ is a (weak) solution of $(|u|^{p-2}u)''+(2-\frac{2}{p})u=0$.
Then, as in the previous argument, since $w:=|u|^{p-2}u$ is a $C^2$-solution to $w''+(2-\frac{2}{p})w^{2m-1}=0$ with some integer $m\geq1$, we obtain the analyticity of $u=|w|^{\frac{2-p}{p-1}}w=w^{2m-1}$.

\textbf{Step 2} (\textsl{Derivative formulae and lower-order regularity})\textbf{.} 
Now, in order to address (ii) and (iii), we first derive a higher-order derivative formula for $\cn_p$.
Let $m_p:=\lceil\frac{1}{p-1}\rceil$.
Write $a(x)$ instead of $\amcn(x, q)$ for simplicity.
Let $g_0(t):=1$ and inductively define a family $\{g_m\}_{m=0}^{m_p}\subset C^\infty(\R)$ by
\begin{align*}
g_{m+1}(t):=(\tfrac{2}{p}-m)^{-1}\sqrt{1-q^2 t^2} \Big(-\big( \tfrac{2}{p}(m+1) -2m \big) t \, g_m(t) 
+(1-t^2)  g_m'(t) \Big).
\end{align*}
Note that, in cases (ii) and (iii), $\frac{2}{p}-m\neq0$ holds for any integer $m\in[0,m_p]$. 
One can inductively prove that $g_m(\pm1)\neq0$ for any integer $m\in[0,m_p]$.
Then, noting that 
$$a'(x)=| \cos a(x)|^{\frac{2}{p}-1}\sqrt{1-q^2\sin^2 a(x)},$$
one can directly compute the following formula for any integer $m \in[0, m_p]$ and $x\in \R\setminus Z_{p,q}$ by induction on $m$:
\begin{align}
\frac{d^m}{dx^m}\cn_p(x,q) &=\frac{d^m}{dx^m} \Big(|\cos a(x)|^{\frac{2}{p}-1}\cos a(x) \Big)  \notag\\
&=
\begin{cases}
c_m \big|\cos a(x)  \big|^{\tfrac{2}{p}(m+1) -2m} g_m\big(\sin a(x)  \big), \quad & m\text{ : odd}, \\
c_m \big| \cos a(x)  \big|^{\tfrac{2}{p}(m+1) -2m-1}\cos a(x)  g_m\big(\sin a(x)  \big), & m\text{ : even},
\end{cases} 
 \label{eq:1111-1}
\end{align}
where 
$c_m := \prod_{i=1}^{m}(\tfrac{2}{p}-i+1)$.
Note that if $1\leq m\leq m_p:=\lceil \frac{1}{p-1} \rceil$, then $c_m\neq0$.
In what follows we discuss the regularity by using \eqref{eq:1111-1}.

The above formula directly implies lower-order regularity.
Indeed, if $m\in [1, m_p-1]$, then $ m < \frac{1}{p-1}$ and hence $\frac{2}{p}(m+1) -2m > 0$.
This together with \eqref{eq:1111-1} implies that the derivative $\frac{d^m}{dx^m}\cn_p(x, q)$ (computed on $\R\setminus Z_{p,q}$) is continuously extended to $\R$, i.e.,
\begin{align} \label{eq:1112-2}
 \cn_p(\cdot,q) \in C^{m_p-1}(\R).    
\end{align}
In what follows we consider the regularity of order $m_p$ in cases (ii) and (iii).

\textbf{Step 3} (\textsl{Case (ii)})\textbf{.} 
Let $\frac{1}{p-1}$ be an even integer.
Then $\frac{2}{p}(m_p+1) -2m_p-1=-1$, and hence by \eqref{eq:1111-1} we deduce that for $x \in \R\setminus Z_{p,q}$,
\begin{align}\label{eq:1223-4}
\frac{d^{m_p} }{d x^{m_p} } \cn_p (x, q) 
=c_{m_p}\big| \cos{a(x)} \big|^{-1}\cos{a(x)} g_{m_p}\big(\sin{a(x)} \big).
\end{align}
Since $a(x)$ is continuous and $\sin (a(x)) \in [-1,1]$, the continuity of $g_{m_p}$ implies $\cn_p(\cdot, q) \in W^{m_p, \infty}(\R)$.
Next we show 
\begin{equation} \label{eq:kaidan}
\frac{d^{m_p} }{d x^{m_p} } \cn_p(\cdot, q) \notin C(\R).
\end{equation}
Fix $x_0 \in Z_{p,q}$ arbitrarily.
Then $a(x_0)=\frac{\pi}{2}+n\pi$ for some $n\in\N$;
we only argue for $a(x_0)={\pi}/{2}$.
Since $a$ is strictly increasing in $\R$, we observe from \eqref{eq:1223-4} that
\[
\lim_{x\uparrow x_0}\frac{d^{m_p} }{d x^{m_p} } \cn_p (x, q) =c_{m_p} g_{m_p}(1), 
\quad 
\lim_{x\downarrow x_0}\frac{d^{m_p} }{d x^{m_p} } \cn_p (x, q) =-c_{m_p} g_{m_p}(1).
\]
Combining this with 
$g_{m_p}(\pm1)\neq0$, we notice that 
$\frac{d^{m_p}}{dx^{m_p}}\cn_p(\cdot,q)$ is not continuous at $x=x_0$
and thus obtain \eqref{eq:kaidan}.
Moreover, this also implies that $\frac{d^{m_p}}{dx^{m_p}}\cn_p(\cdot,q) \notin W^{1,1}(U)$, where $U$ is a neighborhood of $x_0 \in Z_{p,q}$, and hence $\cn_p\not\in W^{m_p+1,1}_\mathrm{loc}(\R)$.

\textbf{Step 4} (\textsl{Case (iii)})\textbf{.} Let $\frac{1}{p-1}$ be not an integer. 
Fix $x_0 \in Z_{p,q}$ arbitrarily.
Then similar to Case (ii), we may assume $a(x_0)={\pi}/{2}$.
Since $\frac{1}{p-1} < m_p$, 
we notice that $\frac{2}{p}(m_p+1)-2m_p<0$.
Therefore, using \eqref{eq:1111-1}, we obtain for $x\in \R\setminus Z_{p,q}$,
\begin{align} \label{eq:1112-4}
\left|\frac{d^{m_p}}{dx^{m_p}} \cn_p (x, q) \right|=
c_{m_p} \big|\cos{a(x)} \big|^{\tfrac{2}{p}({m_p}+1) -2{m_p}} \big|g_{m_p}\big(\sin{a(x)} \big) \big|.
\end{align}
Recalling that $\frac{2}{p}(m_p+1)-2m_p<0$ and $g_{m_p}(\pm1)\neq0$, we see that  $|\frac{d^{m_p}}{dx^{m_p}}\cn_p(x,q)|$ diverges as $x\to x_0$. 
For a sufficiently small $\delta>0$ and arbitrary $r \geq1$, we see from \eqref{eq:1112-4} that
\begin{align*}
\mathbf{I}&=\int_{x_0-\delta}^{x_0+\delta}
\left|\frac{d^{m_p}}{dx^{m_p}} \cn_p (x, q) \right|^r \,dx \\
&=
\int_{x_0-\delta}^{x_0+\delta}|c_{m_p}|^r \big|\cos{a(x)} \big|^{\big(\tfrac{2}{p}({m_p}+1) -2{m_p}\big)r} \big|g_{m_p}\big(\sin{a(x)} \big) \big|^r \,dx.
\end{align*}
The change of variables $\xi=a(x)=\amcn(x,q)$ yields, for some $\delta'>0$,
\begin{align*}
\mathbf{I}
&=|c_{m_p}|^r
\int_{\frac{\pi}{2}-\delta'}^{\frac{\pi}{2}+\delta'} \big|\cos\xi \big|^{\big(\tfrac{2}{p}({m_p}+1) -2{m_p}\big)r} \big|g_{m_p}(\sin\xi) \big|^r\frac{|\cos\xi|^{1-\frac{2}{p}}}{\sqrt{1-q^2\sin^2 \xi}} \,d\xi.
\end{align*}
Since $g_{m_p}(\sin \xi)$ is continuous around $\xi=\pi/2$ and satisfies $g_{m_p}(\sin \frac{\pi}{2})\neq0$, the integral $\mathbf{I}$ is finite if and only if
\[
\Big(\frac{2}{p}({m_p}+1) -2{m_p}\Big)r + 1-\frac{2}{p} > -1.
\]
Since $m_p+1-m_p p <0$, and since  $m_p<\frac{1}{p-1}$, the above condition is equivalent to $1 \leq r <r_p$.
The proof is complete.
\end{proof}

\begin{proof}[Proof of Proposition~\ref{thm:regularity_sech}]
First we address the analyticity.
Since $\amcn(\cdot,1)$ is analytic, since $|\amcn(\cdot,1)| < \frac{\pi}{2}$ on $(-\K_p(1), \K_p(1))$, 
and since the function $x\mapsto \cos^{\frac{2}{p}}{x}$ is analytic on $(-\frac{\pi}{2},\frac{\pi}{2})$, we see that $\sech_p$ is analytic on $(-\K_p(1), \K_p(1))$.
If $p>2$, then $\sech_p\equiv0$ on $\R\setminus[-\K_p(1), \K_p(1)]$ so trivially analytic.
Henceforth we discuss optimality for $p\in(2, \infty)$.

\textbf{Step 1} (\textsl{Derivative formulae and lower-order regularity})\textbf{.}
As above we write $a(x)$ instead of $\amcn(x,1)$ throughout this proof.
Note that $a'(x)=(\cos{a(x)})^\frac{2}{p}$ on $(-\K_p(1), \K_p(1))$.
Similar to 
\eqref{eq:1111-1}, we inductively obtain for $m\in[1,M_p]$,
\begin{align} \label{eq:1114-2}
\frac{d^m}{dx^m}\sech_px
&= \big(\cos a(x) \big)^{\frac{2}{p}(m+1)-m}  h_m\big(\sin a(x)\big) , \quad x \in (-\K_p(1), \K_p(1)),
\end{align}
where $\{h_m\}_{m=0}^{M_p}\subset C^{\infty}(\R)$ is defined as follows: $h_0(t):=1$, and inductively define
\begin{align*}
h_{m+1}(t):=-(\tfrac{2}{p}(m+1)-m)th_m(t)+(1-t^2)h'_m(t).
\end{align*}
In particular, $h_m(\pm1)\neq0$ for any integer $m\in[0,M_p]$.
When $1\leq m \leq M_p-1$, we have $m \leq M_p-1 < \frac{p}{p-2} -1 = \frac{2}{p-2}$, and hence
$\frac{2}{p}(m+1)-m>0$.
Using this positivity and \eqref{eq:1114-2}, by the same way to obtain \eqref{eq:1112-2}, 
we see that
\[
\sech_p \in C^{M_p-1}(\R).
\]

\textbf{Step 2} (\textsl{Case (i)})\textbf{.} Suppose that $\frac{2}{p-2}$ is an integer. 
Then we notice that $\frac{2}{p}(M_p+1)-M_p = 0$, 
and hence in view of \eqref{eq:1114-2} we have
\begin{align*} 
 \lim_{x\uparrow \K_p(1)}\frac{d^{M_p}}{dx^{M_p}}\sech_p x
= h_{M_p}(1) \neq0, \quad 
\lim_{x\downarrow -\K_p(1)}\frac{d^{M_p}}{dx^{M_p}}\sech_p x
= h_{M_p}(-1) \neq0.
\end{align*}
Therefore, the $M_p$-th derivative of $\sech_p$ is bounded on $\R$, but not continuous at $x=\pm\K_p(1)$.
In particular, we obtain
$\sech_p \in W^{M_p, \infty}(\R)$ and  $\sech_p  \notin W^{M_p+1, 1}(\R)$.

\textbf{Step 3} (\textsl{Case (ii)})\textbf{.} Suppose that $\frac{2}{p-2}$ is not an integer. 
Fix $r\geq 1$ arbitrarily and consider 
\begin{align}\label{eq:1115-2}
\begin{split}
\int_{\R} 
\left| \frac{d^{M_p}}{dx^{M_p}}\sech_p x \right|^r \,dx
=
\int_{-\K_p(1)}^{\K_p(1)} 
\left| \frac{d^{M_p}}{dx^{M_p}}\sech_p x \right|^r \,dx.
\end{split}
\end{align}
Since $\frac{2}{p-2} \notin \N$, we have $\frac{2}{p-2} < M_p$.
Then it follows that 
$\frac{2}{p}(M_p+1)-M_p<0$.
Therefore, we observe from \eqref{eq:1114-2} that
$|\frac{d^{M_p}}{dx^{M_p}}\sech_p{x}|$ 
diverges as $x\to\pm \K_{p} (1)$.
It suffices to consider the integrability around $x= \K_p(1)$ in \eqref{eq:1115-2}.
By \eqref{eq:1114-2} and the change of variables $\eta=a(x)$, this integrability is reduced to whether the following integral is finite for sufficiently small $\delta>0$:
\begin{align*}
\int_{\frac{\pi}{2}-\delta}^{\frac{\pi}{2}} (\cos\eta )^{\big(\tfrac{2}{p}({M_p}+1) -M_p\big)r} \big|h_{M_p}(\sin\eta) \big|^r (\cos\eta)^{-\frac{2}{p}} \,d\eta.
\end{align*}
Since $h_{M_p}(\sin \eta)$ is continuous and satisfies $h_{M_p}(\sin \frac{\pi}{2})\neq0$, the integral is finite if and only if
\[ \Big(\frac{2}{p}(M_p+1) -M_p\Big)r -\frac{2}{p} > -1,\]
which is equivalent to $1 \leq r <  R_p$.
The proof is complete.
\end{proof}


\bibliographystyle{abbrv}
\bibliography{ref_Miura-Yoshizawa-ver6}

\end{document}